\documentclass[12pt]{article}
\usepackage[T2A]{fontenc}
\usepackage[english]{babel}
\usepackage[utf8]{inputenc}
\usepackage{hyperref}
\usepackage{amssymb,amsmath,amsfonts,amsthm,amscd,latexsym,indentfirst,verbatim,xcolor}
\usepackage{enumerate}

\sloppy

\setlength{\topmargin}{-23mm} \setlength{\oddsidemargin}{-0.3cm}
\textwidth=165mm \textheight=245mm


\newtheorem{lem}{Lemma}[section]

\newtheorem{theorem}{Theorem}

\newtheorem{prop}[lem]{Proposition}
\newtheorem*{prob*}{Problem}
\theoremstyle{remark}
\newtheorem{rem}{Remark}

\newcommand{\Aut}{\operatorname{Aut}}
\newcommand{\Out}{\operatorname{Out}}
\newcommand{\prk}{\operatorname{prk}}
\newcommand{\Soc}{\operatorname{Soc}}

\def\ov{\overline}

\author{N.~Yang, I.B.~Gorshkov, A.M.~Staroletov, A.V.~Vasil$'$ev}
\title{On recognition of direct powers of finite simple linear groups by spectrum\thanks{The first and fourth authors were supported by Foreign Experts program in Jiangsu Province (No. JSB2018014)}\phantom{ }\thanks{The second, third, and fourth authors were supported by RAS Fundamental Research Program, project FWNF-2022-0002}}
\date{}

\begin{document}
\maketitle
\newcommand{\Addresses}{{
		\bigskip\noindent
		\footnotesize
		Nanying~Yang, \textsc{School of Science, Jiangnan University, Wuxi, 214122, P.R.~China;}\\\nopagebreak
		\textit{E-mail address: } \texttt{yangny@jiangnan.edu.cn}
		
		\medskip\noindent
		Ilya~B.~Gorshkov, \textsc{Sobolev Institute of Mathematics, Novosibirsk, Russia;}\\\nopagebreak
                \textit{E-mail address: } \texttt{ilygor8@gmail.com}
		
		\medskip\noindent
		Alexey~M.~Staroletov, \textsc{Sobolev Institute of Mathematics, Novosibirsk, Russia;}\\\nopagebreak
		\textit{E-mail address: } \texttt{staroletov@math.nsc.ru}
		
		\medskip\noindent
		Andrey~V.~Vasil$'$ev, \textsc{Sobolev Institute of Mathematics, Novosibirsk, Russia;}\\\nopagebreak
		\textit{E-mail address: } \texttt{vasand@math.nsc.ru}
		
		\medskip
}}

\begin{abstract}
The spectrum of a finite group is the set of its element orders. We give an affirmative answer to Problem 20.58(a) from the {\em Kourovka Notebook} proving that for every positive integer $k$, the $k$-th direct power of the simple linear group $L_{n}(2)$ is uniquely determined by its spectrum in the class of finite groups provided $n$ is a power of $2$ greater than or equal to $56k^2$.
\end{abstract}
\section{Introduction}

All groups considered in this paper are finite, the simple sporadic groups and simple groups of Lie type are denoted according to the notation of {\em Atlas of finite groups}~\cite{atlas}, the symmetric and alternating groups of degree $n$ are denoted by $Sym_n$ and $Alt_n$, respectively.

Given a group $G$, denote by $\omega(G)$ the spectrum of $G$, that is the set of all its element orders. Groups whose spectra coincide are said to be isospectral.
We refer to a group $G$ as {\it recognizable} (by spectrum) if every finite group isospectral to $G$ is
isomorphic to $G$, as {\it almost recognizable} (by spectrum) if there is only a finite number of pairwise nonisomorphic groups isospectral to $G$, and as {\em unrecognizable} otherwise. It is known that if a finite group is almost recognizable, then its socle is a direct product of nonabelian simple groups \cite[Lemma~1]{nonrecog}. On the other hand, if $G$ is a nonabelian simple group, then in most cases $G$ is almost recognizable \cite[Theorem~1.1]{GrechVas15}. More information on the recognition of simple groups and related topics can be found in the recent survey article~\cite{survey}.

Though the recognition problem is solved for most of the simple groups, very little is known about recognizability of (nontrivial) direct products of simple groups. The recognizability of groups $Sz(2^7)\times Sz(2^7)$ and $J_4\times J_4$ was proved in \cite{Maz97} and \cite{GorMas}, respectively. Recently, it has been proved in \cite{Suzuki} that the direct squares of Suzuki groups $Sz(q)$, where $q\geq 8$ and $q\neq32$, are recognizable and the group $Sz(32)\times Sz(32)$ is almost recognizable. For cubes of simple groups, it is only known that the group $L_{n}(2)\times L_n(2)\times L_n(2)$ is recognizable for all $n=2^l\geq64$~\cite{Gor22}.

Can a recognizable group  be a direct product of arbitrary many simple groups? If we do not presuppose that all simple factors are isomorphic, then the answer is affirmative as shown in \cite[Theorem~19]{survey}: for every $k>1$ there exists a set $\Delta(k)$ of $k$ primes such that the group $\prod\limits_{p\in\Delta(k)}Sz(2^p)$ is recognizable. If we fix a simple group $L$ (or even any finite group) and consider its direct powers, then as easily seen (cf. \cite[Section~4.3]{survey}), there is $k_0$ depending on $L$ such that $\omega(L^k)=\omega(L^{k_0})$ for all $k\geq k_0$, in particular, $L^k$ is unrecognizable for every such~$k$. The remaining question is if one can, vice versa, start with an integer $k$ and find an appropriate simple group~$L$.

\begin{prob*}{\em\cite[Problem~4.8]{survey}, \cite[Problem~20.58(a)]{Kourovka}}
Is it true that for every $k$ there is a recognizable group that is the $k$-th direct
power of a nonabelian simple group?
\end{prob*}

In the present paper we develop techniques from \cite{Gor22} and obtain an affirmative answer to this problem.

\begin{theorem}\label{t:main} Let $k$ and $l$ be positive integers and $n=2^l\geq56k^2$. Suppose that $L=L_{n}(2)$ and $P$ is the $k$-th direct power of~$L$.
If $G$ is a finite group with $\omega(G)=\omega(P)$, then $G\simeq P$.
\end{theorem}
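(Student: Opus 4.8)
The plan is to analyze an arbitrary finite group $G$ with $\omega(G)=\omega(P)$, where $P=L^k$ and $L=L_n(2)$ with $n=2^l\geq 56k^2$, and to force $G\simeq P$ through a sequence of increasingly rigid structural constraints. First I would record the relevant arithmetic of $\omega(L)$: the element orders of $L_n(2)$ are governed by the divisors of the numbers $2^i-1$ for $i\le n$ together with unipotent orders (bounded by $2^{\lceil\log_2 n\rceil}$), and crucially $L$ has ``large'' primitive prime divisors $r_i$ of $2^i-1$ for $i$ close to $n$ (Zsygmondy primes), which satisfy $i=e(r_i,2)$. Because $n=2^l$ is large relative to $k$, one gets many such primes, and the key combinatorial fact is that in $\omega(L^k)=\omega(P)$ one can multiply together at most $k$ pairwise coprime element orders from $\omega(L)$; so the ``width'' of independent large-order elements is exactly $k$. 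This should pin down $k$ as an invariant of $\omega(P)$ and give a lower bound $n/2$ or so on the size of the largest relevant prime, which will be used to exclude alternating and sporadic composition factors and to restrict Lie-type factors to characteristic $2$.

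Next I would apply the Gruenberg--Kegel (prime graph) machinery together with the classification of finite simple groups. Using the fact (cited from \cite{nonrecog} via \cite[Lemma~1]{GrechVas15}, and developed in \cite{Gor22}) that a group isospectral to $P$ has socle a direct product of nonabelian simple groups, I would write $\Soc(G)=S_1\times\cdots\times S_m$ with each $S_j$ nonabelian simple, and $\bar G=G/\Soc(G)\hookrightarrow \prod \Out(S_j)$ acting on the factors. The isospectrality with $P=L^k$ forces, via the adjacency/nonadjacency pattern of the large primes $r_i$, that each $S_j$ is a simple group of Lie type in characteristic $2$ whose spectrum embeds compatibly into $\omega(L)$; a Zsygmondy/Lie-rank argument (this is where the bound $n\geq 56k^2$ does its work, controlling how large a characteristic-$2$ group can be while still fitting its unipotent and semisimple orders inside $\omega(L^k)$) should then yield that in fact each $S_j\simeq L_{n}(2)$ — or possibly some group with spectrum contained in that of $L_n(2)$, which one then rules out by exhibiting an element order of $L_n(2)$ not realized. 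Here one leans on the near-recognizability of the single group $L_n(2)$ for $n=2^l$ and on the explicit adjacency criteria for prime graphs of classical groups (Vasil'ev--Grechkoseeva-type results) referenced through \cite{survey}.

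Having shown every composition factor of $\Soc(G)$ is $L=L_n(2)$, I would bound the number $m$ of factors: since $r_n$ (a primitive prime divisor of $2^n-1$) has $e(r_n,2)=n$ and appears in every $S_j\simeq L$, but in $\omega(P)$ no product of more than $k$ of these coprime-to-unipotent primes of maximal order can occur, one concludes $m\le k$; conversely $\omega(P)$ contains a product of $k$ such independent orders, forcing $m\ge k$, hence $m=k$ and $\Soc(G)\simeq L^k$. Then I would kill the outer part: $\Out(L_n(2))$ is generated by the graph (transpose-inverse) automorphism and the field automorphisms, but $n=2^l$ is a power of $2$ so there are no nontrivial field automorphisms over $\mathbb F_2$; the only candidate is the graph automorphism (order $2$), and one shows that adjoining it — or permuting the $k$ isomorphic factors — produces a new element order (e.g. from $L_n(2)\rtimes\langle\gamma\rangle$ or from a ``diagonal'' twisted element across two swapped factors) that is absent from $\omega(L^k)$, using that such extensions raise the $2$-part or create a new odd order. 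This forces $\bar G=1$ and the permutation action trivial, giving $G=\Soc(G)\simeq L^k=P$.

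The main obstacle I anticipate is the middle step: ruling out \emph{all} other characteristic-$2$ simple groups (and mixed collections of them) whose spectra could conceivably combine, across up to $k$ direct factors, to reproduce $\omega(L_n(2)^k)$ exactly. This is a delicate arithmetic fit — one must compare not just which primes occur but the full lattice of element orders — and it is precisely here that the hypothesis $n\ge 56k^2$ must be quantitatively exploited, presumably by showing that any ``foreign'' characteristic-$2$ group large enough to contribute the required big Zsygmondy primes $r_i$ with $i$ near $n$ would necessarily also contribute a unipotent or semisimple element order exceeding everything available in $\omega(L_n(2)^k)$. Turning this heuristic into a clean estimate, uniformly in $k$, is the crux; the remaining recognition-of-$\Soc$ and outer-automorphism arguments are comparatively routine refinements of the techniques already used in \cite{Gor22} for the case $k=3$.
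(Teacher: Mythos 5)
Your outline of the identification step (large Zsigmondy primes forming a coclique of size about $n/2$, exclusion of alternating, sporadic and odd-characteristic factors, pinning each factor to $L_n(2)$ and the count $m=k$, and killing the outer part via the graph automorphism and factor permutations) matches the paper's strategy reasonably well. But there is a genuine gap at the start of your second paragraph: you assert that ``a group isospectral to $P$ has socle a direct product of nonabelian simple groups,'' citing Mazurov's lemma. That lemma states the converse implication --- \emph{if} a group is almost recognizable, \emph{then} its socle is such a product --- so invoking it here is circular: you would need to already know that $P$ is recognizable to conclude anything about the structure of $G$. For an arbitrary $G$ with $\omega(G)=\omega(P)$ you must contend with a possibly nontrivial solvable normal subgroup, and this is where the paper does its hardest quantitative work: it proves (Lemma~\ref{l:groups-NK}) that some chief factor of $G$ meets the coclique $\Omega=\{r_{n/2+1},\dots,r_n\}$ in more than $k$ primes, by combining Lemma~\ref{l:r-primes} (a solvable subgroup whose order is divisible by primes from many distinct chief factors) with the bound $\rho(k)\le 6k$ of Lemma~\ref{l:Zhang} on the number of prime divisors of a solvable group all of whose element orders have at most $k$ prime divisors. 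This is precisely where the hypothesis $n\ge 56k^2$ enters (via $|\Omega|=n/2\ge 28k^2>k\cdot\rho(k)$), not, as you suggest, in comparing unipotent orders of foreign characteristic-$2$ groups.

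Even after the semisimple layer $K=S_1\times\cdots\times S_k$ is identified with $L^k$, your proposal stops: you never show that the residual normal subgroup $N$ below $K$ is trivial. In the paper this requires a separate mechanism --- the Frobenius subgroups of $L_n(2)$ from Lemma~\ref{l:frob} acting on a minimal normal $p$-subgroup, together with Lemmas~\ref{l:frob-action} and~\ref{l:action}, which manufacture element orders $p^{t+1}$, $2^{l+1}$, or $rs_1\cdots s_m$ lying outside $\omega(P)$ --- plus the preparatory Lemma~\ref{l:n/3} restricting which primes can divide $|N|$ and $|\overline{G}/K|$. Without an argument of this kind the proof is incomplete: a group of the form $V\rtimes L^k$ with $V$ a suitable module could a priori be isospectral to $L^k$, and ruling this out is not a ``comparatively routine refinement'' but a core step of any recognition proof.
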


\begin{rem}\label{r:n0} Suppose that given a positive integer $k$, one wish to find the smallest $n_0$ such that $P=L_n(2)^k$ is recognizable for every $n=2^l\geq n_0$. Theorem~\ref{t:main} provides an upper bound on $n_0$, which is quadratic. It is not hard  to show (and we do this in the last section, see Proposition~\ref{p:upper}) that there is a linear lower bound: the group $P$ is unrecognizable for all $n<2k$. The exact value of $n_0$ as a function of $k$ is not known for all $k>1$ (if $k=1$, then it follows from \cite[Corollary~1]{ZavMaz}) that $n_0=4$).
\end{rem}


The next theorem shows that the situation described in Theorem~\ref{t:main} is quite specific. Namely, for a wide range of simple groups of arbitrarily large dimension even their squares or cubes are unrecognizable by spectrum. The standard abbreviations $L_n^+(q)=L_n(q)$ and $L_n^-(q)=U_n(q)$ are used; the cyclic group of order $r$ is denoted by $\mathbb{Z}_r$.

\begin{theorem}\label{p:1}
Let $n\geq2$ be an integer and $q$ a power of a prime~$p$. The following hold.
\begin{enumerate}[{\em(i)}]
\item If $L=L^\varepsilon_n(q)$, where $\varepsilon\in\{+,-\}$,
and there exists a prime $r$ dividing $q-\varepsilon1$ and coprime to $n$, then
$\omega(L^3)=\omega(L^3\times\mathbb{Z}_r^m)$ for every positive integer $m$.
Moreover, if $n-1$ is not a power of $p$, then
$\omega(L^2)=\omega(L^2\times\mathbb{Z}_r^m)$ for every positive integer $m$.
\item If $L=S_{2n}(q)$ and $q$ is odd, then
$\omega(L^3)=\omega(L^3\times\mathbb{Z}_2^m)$ for every positive integer $m$.
Moreover, if $2n-1$ is not a power of $p$, then
$\omega(L^2)=\omega(L^2\times\mathbb{Z}_2^m)$ for every positive integer $m$.
\end{enumerate}
\end{theorem}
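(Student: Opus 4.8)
Write $N=n$ in part~(i) and $N=2n$ in part~(ii) (so $r=2$ there). Since $\mathbb Z_r^m$ is elementary abelian of exponent $r$, we have $\omega(\mathbb Z_r^m)=\{1,r\}$; together with $\omega(L^k)=\{\operatorname{lcm}(a_1,\dots,a_k):a_1,\dots,a_k\in\omega(L)\}$ this gives
$$
\omega(L^k\times\mathbb Z_r^m)=\omega(L^k)\cup\bigl\{\operatorname{lcm}(a_1,\dots,a_k,r):a_1,\dots,a_k\in\omega(L)\bigr\}.
$$
Hence $\omega(L^k\times\mathbb Z_r^m)=\omega(L^k)$ is equivalent to $\operatorname{lcm}(a_1,\dots,a_k,r)\in\omega(L^k)$ for all $a_1,\dots,a_k\in\omega(L)$, and this is what I would prove: for $k=3$ under the standing hypotheses, and for $k=2$ with the extra assumption that $N-1$ is not a power of $p$. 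Call $a\in\omega(L)$ \emph{$r$-absorbing} if $\operatorname{lcm}(a,r)\in\omega(L)$. Every multiple of $r$ in $\omega(L)$ is $r$-absorbing, and since the spectrum of any finite group is closed under division, so is every divisor of an $r$-absorbing number; in particular $a$ is $r$-absorbing as soon as $r\mid a$.

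The core of the argument is the claim that there are two element orders $e_1,e_2\in\omega(L)$ such that \emph{every} non-$r$-absorbing element of $\omega(L)$ divides $e_1$ or $e_2$; here $e_1$ is the largest order of an element lying in a Coxeter (Singer) torus of $L$, and $e_2$ is the largest power of $p$ in $\omega(L)$, realized by a single unipotent Jordan block of size $N$ on the natural module. Moreover, if $N-1$ is not a power of $p$, then $e_2$ is itself $r$-absorbing, whence in that case every non-$r$-absorbing element of $\omega(L)$ divides~$e_1$. Granting this, fix $a_1,\dots,a_k\in\omega(L)$ and put $T=\operatorname{lcm}(a_1,\dots,a_k,r)$. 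If some $a_i$ is $r$-absorbing, replace it by $\operatorname{lcm}(a_i,r)\in\omega(L)$; then $T$ is the lcm of $k$ members of $\omega(L)$, so $T\in\omega(L^k)$. Otherwise every $a_i$ is non-$r$-absorbing. For $k=3$ each $a_i$ divides $e_1$ or $e_2$, so $T$ divides $\operatorname{lcm}(e_1,e_2,r)$, which is the order of the element of $L^3$ with coordinate orders $e_1,e_2,r$; hence $T\in\omega(L^3)$ by closure under division. For $k=2$ with $N-1$ not a power of $p$, each $a_i$ divides $e_1$, so $T$ divides $\operatorname{lcm}(e_1,r)$, the order of the element of $L^2$ with coordinate orders $e_1$ and $r$; hence $T\in\omega(L^2)$.

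To prove the claim I would invoke the explicit descriptions of $\omega(L^\varepsilon_n(q))$ and $\omega(S_{2n}(q))$ and argue contrapositively. Take $a\in\omega(L)$ and an element $g$ of the ambient group $SL^\varepsilon_n(q)$ (resp.\ $Sp_{2n}(q)$) whose image in $L$ has order $a$ and that acts nontrivially on as few dimensions of the natural module as possible. If there is a spare $2$-dimensional subspace $U$, modify $g$ on $U$: for $L^\varepsilon_n(q)$ replace the identity on $U$ by $\operatorname{diag}(\zeta,\zeta^{-1})$ with $\zeta$ of order $r$ in $\mathbb F_q^{\times}$ (resp.\ $\mathbb F_{q^2}^{\times}$), and for $S_{2n}(q)$ (where $r=2$) replace it by $-\operatorname{id}$ on a nondegenerate $U$; more generally, when $g$ has a nontrivial semisimple part one can insert the factor $r$ into that part, correcting the determinant by twisting a unipotent Jordan block of $g$ into a block of the form $\mu J$. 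These manipulations, using only that $\operatorname{ord}(\det g)$ divides $|g|$, produce an element of order exactly $\operatorname{lcm}(a,r)$, so $a$ is $r$-absorbing. Consequently a non-$r$-absorbing $a$ forces $g$ to occupy essentially the whole module with a centralizer of order prime to $r$, and a primitive-prime-divisor argument shows this happens only in two ways: $g$ lies in a Coxeter torus — whose order is prime to $r$ (here $\gcd(r,n)=1$ enters in part~(i), and a $2$-adic computation of $q^n\pm1$ in part~(ii)) and which is self-centralizing — so $a\mid e_1$; or the unipotent part of $g$ is a single Jordan block so large that no two spare dimensions are left, which by the elementary relation between a block size and the $p$-part of its order occurs only when $N-1$ is a power of $p$, so $a\mid e_2$.

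The main obstacle is making this dichotomy rigorous, i.e.\ confirming that the non-$r$-absorbing element orders really are confined to divisors of just two prescribed numbers $e_1,e_2$ (and to divisors of $e_1$ alone when $N-1$ is not a power of $p$). This forces one to work with the fine form of the spectrum of $L$: to control the contribution of the center of $SL^\varepsilon_n(q)$ (resp.\ $Sp_{2n}(q)$) when descending to the simple group, to handle in the unitary and symplectic cases the sign conventions ($q^d-\varepsilon^d$, $q^d\pm1$) and the parity/admissibility constraints on unipotent Jordan types, and — for the symplectic groups — to accommodate that the central involution is trivial in $S_{2n}(q)$, so that incorporating $r=2$ genuinely consumes a nondegenerate $2$-subspace; this last point is precisely what makes the hypothesis ``$2n-1$ is not a power of $p$'' necessary for the square.
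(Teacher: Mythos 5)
Your reduction is exactly the one the paper uses: since $\omega(\mathbb Z_r^m)=\{1,r\}$ and spectra of direct products are generated by lcm's of coordinate orders, everything comes down to showing that the element orders of $L$ not ``absorbing'' $r$ all divide at most two fixed members of $\omega(L)$ (and only one when $N-1$ is not a power of $p$), after which the three-fold (resp.\ two-fold) lcm argument is routine and you carry it out correctly. The paper phrases this via the set $\mu(L)$ of divisibility-maximal element orders: it shows there are at most two elements of $\mu(L)$ coprime to $r$, which is equivalent to your dichotomy, since a non-$r$-absorbing order must divide some member of $\mu(L)$ that is itself coprime to~$r$.

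The genuine gap is that you never actually establish the dichotomy, and you flag it yourself as ``the main obstacle.'' The matrix-theoretic route you sketch (inserting $\operatorname{diag}(\zeta,\zeta^{-1})$ on a spare plane, twisting Jordan blocks, controlling the centre of $SL_n^\varepsilon(q)$) would be a substantial case analysis and is unnecessary: the claim is immediate from the explicit spectrum descriptions already quoted as Lemmas~\ref{l:spec} and~\ref{l:spec2}. Indeed, since $r\mid q-\varepsilon1$ and $r\nmid n$, every generating number in Lemma~\ref{l:spec} of types (ii)--(v) has numerator divisible by $r$ and denominator dividing $n$ or $(n,q-\varepsilon1)$, hence is divisible by $r$; only $\frac{q^n-(\varepsilon1)^n}{d(q-\varepsilon1)}$ and $p^k$ (the latter maximal only when $n-1=p^{k-1}$) can be coprime to $r$. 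The symplectic case is identical with $r=2$, all generators of types (ii)--(iii) in Lemma~\ref{l:spec2} being even. One further slip: in part (ii) you identify $e_1$ with the Coxeter (Singer) torus order $\frac{q^n+1}{2}$, but exactly one of $\frac{q^n\pm1}{2}$ is odd, and which one depends on $q\bmod 4$ and the parity of $n$; when $q\equiv3\pmod 4$ and $n$ is odd the relevant number is $\frac{q^n-1}{2}$, coming from a split torus. So $e_1$ should simply be defined as the odd member of $\{\frac{q^n+1}{2},\frac{q^n-1}{2}\}$ rather than as a Coxeter torus order.
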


The paper is organized as follows.
In Section~2, we discuss arithmetic properties of spectra of simple groups alongside with some number theoretic facts.
In Section 3, we list auxiliary group theoretic results that are used in our proofs. Section 4 is devoted to the proof of Theorem~\ref{t:main}.
Finally, in Section~5, we prove Theorem~\ref{p:1} and Proposition~\ref{p:upper}.

\section{Preliminaries: arithmetic of spectra of simple groups}


Given a nonzero integer $n$, we put $\pi(n)$ for the set of prime divisors of $n$.
If $G$ is a group and $g\in G$, then we write $\pi(G)$ for $\pi(|G|)$ and $\pi(g)$ for $\pi(|g|)$.
Denote $$\rho(k)=\max\{|\pi(G)|~|~G\text{ is solvable and }|\pi(g)|\leq k\text{ for every }g\in G\}.$$

\begin{lem}\label{l:Zhang}
The following hold:
\begin{enumerate}[{\em(i)}]
 \item $\rho(k)\leq\frac{k(k+3)}{2}$ for every $k\geq1;$
 \item $\rho(k)\leq6k$ for every $k\geq1;$
 \item $\rho(k)\leq7k-9$ for every $k\geq2$.
\end{enumerate}
\end{lem}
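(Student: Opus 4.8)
The plan is to translate each inequality into a statement about the Gruenberg--Kegel (prime) graph of a solvable group $G$: the graph on the vertex set $\pi(G)$ in which primes $p,q$ are joined exactly when $pq\in\omega(G)$. Two facts drive everything. First, a classical theorem of Higman says that a solvable group all of whose elements have prime power order has at most two prime divisors; applied to the Hall $\{p,q,r\}$-subgroup of $G$ (which exists since $G$ is solvable) it shows that among any three primes of $\pi(G)$ some two are adjacent, i.e. the independence number of the prime graph of a solvable group is at most $2$. Second, if $R\lhd G$ with $G/R$ nilpotent, then picking an element of order $q$ in each Sylow subgroup of $G/R$ and multiplying produces an element of $G/R$ whose order is the product of all primes in $\pi(G/R)$; lifting it to $G$ gives $|\pi(G/R)|\le k$. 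In particular, $F(G)$ being nilpotent and containing such a ``Hall element'' forces $|\pi(F(G))|\le k$.

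For (i) I would induct on $k$; the base case $k=1$ is exactly Higman's theorem ($\rho(1)=2$). Fix a prime $p\in\pi(G)$ and an element $x$ of order $p$, put $C=C_G(x)$, and let $K$ be a Hall $p'$-subgroup of $C$. For $y\in K$ the element $xy$ has order $p|y|$, hence $|\pi(y)|\le k-1$; by induction $|\pi(K)|\le\rho(k-1)\le\frac{(k-1)(k+2)}{2}$, so $|\pi(C)|\le\frac{(k-1)(k+2)}{2}+1$. Since $\frac{(k-1)(k+2)}{2}+1+k=\frac{k(k+3)}{2}$, it suffices to choose $p$ and $x$ so that at most $k$ primes of $\pi(G)$ lie outside $\pi(C_G(x))$. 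Up to the fusion of order-$p$ elements, the primes missing from $\pi(C_G(x))$ are the non-neighbours of $p$ in the prime graph, and, by the previous paragraph, these form a clique; so the remaining task is to bound such a clique. This is where genuine group theory is needed rather than graph theory alone, because cliques of the prime graph of a solvable group need not be realised by single elements: in a suitable split extension $\mathbb{Z}_7^2\rtimes\mathbb{Z}_6$ the set $\{2,3,7\}$ is a triangle of the prime graph, yet there is no element of order $42$. I would control the non-neighbour clique either by passing to a Hall subgroup supported on those primes and iterating the estimate, or (when $F(G)\ne 1$) by taking $x\in Z(F(G))$ of order $p$, so that $\pi(F(G))\subseteq\pi(C_G(x))$, and then using a coprime fixed-point argument to produce, for each remaining prime, an element whose order has one further prime divisor.

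For the linear bounds (ii) and (iii) a considerably finer analysis is required, along the lines of the argument behind the $7k$ bound of \cite{HY}. One passes to the Fitting series $1=F_0\lhd F_1\lhd\cdots\lhd F_h=G$: each factor $F_i/F_{i-1}$ is nilpotent and so accounts for at most $k$ primes, and the crux is to show that, after a bounded initial portion, essentially every prime entering at a new layer is already adjacent in the prime graph to a prime counted earlier -- because a suitable coprime-order element acting on the relevant chief factor fixes a nonzero vector -- so that the total stays linear in $k$. Optimising these module-theoretic estimates sharpens $7k$ to $6k$, and separate bookkeeping in the bottom layers yields $7k-9$ for $k\ge 2$. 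The main obstacle throughout is precisely to rule out long chains of primes acting successively fixed-point-freely (Frobenius-like) on the sections of a chief series, since these are exactly the configurations that enlarge $\pi(G)$ without enlarging the maximal number of prime divisors of an element order.
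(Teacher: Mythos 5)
Your proposal is a programme rather than a proof: at each of the three items the step that actually carries the difficulty is announced but not executed. For (i), the paper simply cites Zhang's theorem, whereas you set out to reprove it by induction on $k$ via centralizers. Two gaps remain. First, the reduction from ``$q$ is adjacent to $p$ in the prime graph'' to ``$q\in\pi(C_G(x))$ for the chosen $x$ of order $p$'' is not valid as stated: an element of order $pq$ witnesses adjacency through \emph{some} element of order $p$, not necessarily one conjugate to your fixed $x$, and in a solvable group the elements of order $p$ need not be fused. Second, and more seriously, you correctly identify that the non-neighbours of $p$ form a clique and that cliques of the prime graph of a solvable group need not be realised by single elements, so the bound $\le k$ on this clique is exactly the hard content --- and you offer two candidate strategies (Hall subgroups, or $x\in Z(F(G))$ plus coprime fixed points) without carrying either one through. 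As it stands the induction does not close.

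For (ii) the divergence from the paper is even sharper. You propose to rerun the Fitting-series analysis of \cite{HY} and ``optimise'' the module-theoretic estimates from $7k$ down to $6k$. That optimisation is precisely what is not known how to do: the authors remark that $6k$ improves on the $7k$ of \cite{HY} and appears to be the best linear bound available, so asserting that finer bookkeeping yields $6k$ is an unsubstantiated claim of a new result. The paper obtains $6k$ by an entirely different and much shorter route: assuming $\rho(k)>6k$, item (i) forces $k\ge10$ and hence $\rho(k)\ge61$; Keller's inequality $\rho(k)\le\tau(\rho(k))+4k$ combined with Dusart's explicit estimate $\tau(x)\le 1.2551x/\ln x$ then gives $\rho(k)<0.3054\,\rho(k)+4k$, i.e.\ $\rho(k)<5.8k$, a contradiction. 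Item (iii) is then pure arithmetic from (i) and (ii) (namely $k(k+3)/2\le 7k-9$ iff $(k-2)(k-9)\le0$), whereas your ``separate bookkeeping in the bottom layers'' is again left unspecified. If you want a self-contained argument you should either reproduce Zhang's and Keller's proofs in full or, as the paper does, cite them and supply only the bootstrap.
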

\begin{proof}
Item (i) is \cite[Theorem~1]{Zhang}, while item (ii) follows from \cite[Theorem~1.1]{Yang}.
Item~(iii) is an easy consequence of (i) and (ii). Indeed, if $k>9$, then (ii) yields $\rho(k)\leq6k\leq7k-9$. On the other hand, $k(k+3)/2\leq7k-9$ is equivalent to $(k-2)(k-9)\leq0$, so $\rho(k)\leq7k-9$ for $2\leq k\leq9$ in view of~(i).
\end{proof}

\begin{rem} 
In the recent preprint \cite{Keller}, the authors established that $\rho(k)\leq 5k$.
\end{rem}

The prime graph (or the Gruenberg-Kegel graph) $\Gamma(G)$ of a group $G$ is defined as follows.
The vertex set is the set $\pi(G)$. Two vertices corresponding to distinct primes $r$ and $s$ are adjacent in
$\Gamma(G)$ if and only if $rs\in\omega(G)$. Recall that an independent set of vertices or a {\it coclique} of a graph $\Gamma$ is any subset of pairwise nonadjacent vertices of $\Gamma$. We write $t(G)$ to denote the greatest size of a coclique in $\Gamma(G)$. The following obvious observation is a key to our technique and will be repeatedly used in further considerations.

\begin{lem}\label{l:key}
Suppose that $\Omega$ is a coclique of size $t$ in the prime graph of a group~$G$. Then for every positive integer $k<t$, the $k$-th direct power $G^k$ of $G$ does not contain an element of order equal to the product of all primes from $\Omega$.
\end{lem}

For a set of nonzero integers $n_1,\ldots,n_k$,
we denote by $(n_1,\ldots,n_k)$ and $[n_1,\ldots,n_k]$
their greatest common divisor and least common multiple, respectively.
If $n$ is a nonzero integer and $r$ is an odd
prime with $(r, n) = 1$, then $e(r, n)$ denotes the multiplicative order of $n$ modulo $r$. Given an odd integer $n$, we
put $e(2, n) = 1$ if $n\equiv1\pmod{4}$, and $e(2,n)=2$ otherwise.
Fix an integer $a$ with $|a|>1$. A prime $r$ is said to be a primitive prime divisor of $a^i-1$ if $e(r,a)=i$. We
write $r_i(a)$ to denote some primitive prime divisor of $a^i-1$, if such a prime exists, and $R_i(a)$ to denote the
set of all such divisors. For $i\neq2$ the product of all primitive divisors of $a^i-1$ taken with multiplicities is denoted by $k_i(a)$. Put $k_2(a)=k_1(-a)$. It is well known that that primitive prime divisors exist for almost all pairs $(a,i)$.

\begin{lem}{\em\cite{Bang,Zhigmondy}}
Let $a$ be an integer and $|a|>1$. For every positive integer $i$ the set $R_i(a)$ is nonempty, except for the pairs $(a, i)\in\{(2, 1), (2, 6), (-2, 2), (-2, 3), (3, 1), (-3, 2)\}$.
\end{lem}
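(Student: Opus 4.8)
This is the classical theorem of Bang and Zsygmondy, stated here with the paper's convention for $e(2,\cdot)$, and the plan is to reproduce the standard cyclotomic-polynomial argument. First I would reduce the case $a\le-2$ to the case $a\ge2$. Write $a=-b$ with $b\ge2$; for an odd prime $r$ not dividing $b$, put $d=e(r,b)$. Working inside the cyclic group $\langle b\rangle\le(\mathbb{Z}/r\mathbb{Z})^{\times}$ one checks that the order $e(r,-b)$ of $-b$ equals $2d$, $d$, or $d/2$ according as $d$ is odd, divisible by $4$, or congruent to $2$ modulo $4$; the prime $2$ (present only when $b$ is odd and $b\equiv3\pmod 4$) is read off directly from the convention. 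Hence $R_i(-b)$ is nonempty if and only if a correspondingly determined $R_j(b)$ is, so the negative case follows from the positive one, and the three exceptional pairs with negative base emerge from the exceptions $(2,1)$, $(2,6)$ for $b=2$ and from $(3,1)$ for $b=3$.

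For $a\ge2$ the tools are the factorization $a^i-1=\prod_{d\mid i}\Phi_d(a)$ into cyclotomic polynomials and the classical dichotomy for their prime divisors: if $i\ge3$ and a prime $r$ divides $\Phi_i(a)$, then either $e(r,a)=i$, so $r$ is a primitive prime divisor of $a^i-1$, or $r$ is the largest prime divisor of $i$ and then, by a lifting-the-exponent computation, $r$ divides $\Phi_i(a)$ exactly once. Consequently, if $\Phi_i(a)>i$, then $\Phi_i(a)$ cannot be a power of the unique prime that could fail to be primitive, whence $a^i-1$ has a primitive prime divisor.

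It thus remains to establish the size bound $\Phi_i(a)>i$ for all $a\ge2$ and $i\ge2$ with the sole exception $(a,i)=(2,6)$, and to handle $i=1$ separately. The bound follows from $\Phi_i(a)=\prod_{\gcd(j,i)=1}(a-\zeta^j)\ge(a-1)^{\varphi(i)}$ for a primitive $i$-th root of unity $\zeta$, together with a slight sharpening when $a=2$ so that $\Phi_i(2)$ is not merely bounded below by $1$; since $2^{\varphi(i)}$ eventually outgrows $i$, only finitely many pairs need to be inspected by hand. The remaining small pairs are exactly where the claimed exceptions live: $2^1-1=1$ has no prime divisors; $3^1-1=2$, but $e(2,3)=2$ by the convention, so $R_1(3)=\varnothing$; and $2^6-1=63=3^2\cdot7$ with $e(7,2)=3$, so $R_6(2)=\varnothing$.

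I expect the main difficulty to be bookkeeping rather than conceptual: pinning down the correspondence between $R_i(-b)$ and $R_j(b)$ for every residue class of $d$ modulo $4$ while correctly treating the prime $2$ under the definition of $e(2,\cdot)$, and confirming that the finite list of pairs surviving the size estimate produces precisely the six exceptions asserted and no others.
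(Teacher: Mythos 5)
This lemma is not proved in the paper at all: it is the classical Bang--Zsigmondy theorem, quoted with citations to \cite{Bang} and \cite{Zhigmondy} (adjusted to the paper's convention for $e(2,\cdot)$), so there is no internal proof to compare against. Your sketch is the standard cyclotomic-polynomial argument one finds in those sources and their modern expositions, and its skeleton is sound: the reduction of negative bases to positive ones via the order computation for $-b$ (which is exactly the content of the quoted \cite[Lemma~1.3]{Vas15} in the form $k_i(-a)=k_{2i}(a)$ for odd $i$, etc.), the factorization $a^i-1=\prod_{d\mid i}\Phi_d(a)$, the dichotomy that a prime divisor of $\Phi_i(a)$ is either primitive or the largest prime divisor of $i$ occurring to the first power, and the size estimate forcing a primitive divisor outside finitely many pairs. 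Be aware, though, that as written this is a proof outline rather than a proof: the bound $(a-1)^{\varphi(i)}>i$ genuinely fails for infinitely many pairs with $a=2$ or $3$ (e.g.\ $\varphi(i)$ can be as small as $2$ for $i=6$), so the ``slight sharpening'' and the ``finite inspection'' are where the real work of Zsigmondy's theorem lives and would need to be carried out; you also state the dichotomy only for $i\geq3$ but then invoke the size bound for $i\geq2$, so the case $i=2$ (where the prime $2$ must be handled via the convention $e(2,n)=2$ for $n\equiv3\pmod4$, and where $2$ can divide $\Phi_2(a)$ to a higher power) needs its own short argument. None of these gaps is conceptual, and the exceptional list you extract matches the one in the statement.
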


Sometimes it is convenient to consider primitive divisors $r_i(-q)$ instead of $r_i(q)$ (e.g., for unitary groups).
The following lemma helps to deal with the numbers $e(r_i(-q), q)$ and $e(r_i(q), -q)$.

\begin{lem}{\em\cite[Lemma~1.3]{Vas15}} Let $a$ and $i$ be integers with $|a|>1$ and $i>0$. If $i$ is odd then $k_i(-a)=k_{2i}(a)$,
and if $i$ is a multiple of $4$ then $k_i(-a)=k_i(a)$.
\end{lem}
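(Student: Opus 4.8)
The plan is to reduce both identities to an explicit description of how the multiplicative order of $-a$ modulo an odd prime relates to that of $a$. Fix an odd prime $r$ coprime to $a$ and set $d=e(r,a)$. Working in the cyclic group $(\mathbb{Z}/r\mathbb{Z})^\times$, in which $-1$ is the unique element of order $2$, I would compute $e(r,-a)$ by cases on $d\bmod 4$: if $d$ is odd then $-1\notin\langle a\rangle$ and, since $\gcd(2,d)=1$, one gets $e(r,-a)=2d$; if $d\equiv2\pmod4$ then $-1=a^{d/2}$, so $-a=a^{d/2+1}$ and a short computation of $\gcd(d,\tfrac{d}{2}+1)$ gives $e(r,-a)=d/2$; and if $d\equiv0\pmod4$ then likewise $-a=a^{d/2+1}$ but now the order is unchanged, $e(r,-a)=d$. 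Inverting this correspondence, an odd prime $r$ satisfies $e(r,-a)=i$ with $i$ odd exactly when $e(r,a)=2i$, and $e(r,-a)=i$ with $i\equiv0\pmod4$ exactly when $e(r,a)=i$. I must also check that the prime $2$, whose order is fixed by the special convention $e(2,\,\cdot\,)\in\{1,2\}$, never interferes: for odd $i\ge3$ neither $R_i(-a)$ nor $R_{2i}(a)$ can contain $2$, and for $i\equiv0\pmod4$ the value $i\ge4$ is too large to be attained by $e(2,\,\cdot\,)$. Hence $R_i(-a)=R_{2i}(a)$ for odd $i\ge3$ and $R_i(-a)=R_i(a)$ for $i\equiv0\pmod4$.

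With the supports of the two products identified, it remains to match the multiplicities, that is, the $r$-adic valuations $v_r$ entering the definition of $k$. For $i\equiv0\pmod4$ this is immediate: since $i$ is even, $(-a)^i-1=a^i-1$ as integers, so the products are term-by-term equal and $k_i(-a)=k_i(a)$. For odd $i\ge3$, I would use the factorization $a^{2i}-1=(a^i-1)(a^i+1)$ together with $(-a)^i-1=-(a^i+1)$. If $r\in R_{2i}(a)$ then $e(r,a)=2i\nmid i$, so $r\nmid a^i-1$, whence $v_r(a^{2i}-1)=v_r(a^i+1)=v_r((-a)^i-1)$; taking the product over the common support yields $k_i(-a)=k_{2i}(a)$. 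The remaining odd case $i=1$ requires no argument, since $k_1(-a)=k_2(a)$ is exactly the definition of $k_2$.

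The only genuinely delicate point is the order computation in the first paragraph, and within it the split between $d\equiv2\pmod4$ and $d\equiv0\pmod4$: one must carefully evaluate $\gcd(d,\tfrac{d}{2}+1)$ to see that the order of $-a$ drops to $d/2$ in the first subcase but stays $d$ in the second, and then invert the resulting map without losing or introducing classes modulo $4$. Everything after that — disposing of the prime $2$ and matching valuations — is routine bookkeeping, so I expect this case analysis, rather than the valuation step, to be where care is required.
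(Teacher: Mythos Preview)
The paper does not prove this lemma at all: it is simply quoted from \cite[Lemma~1.3]{Vas15} and used as a black box, so there is no ``paper's own proof'' to compare against. Your argument is correct and is essentially the standard elementary derivation: the trichotomy for $e(r,-a)$ in terms of $d=e(r,a)\bmod 4$ is accurate (including the $\gcd$ computations distinguishing $d\equiv 2$ from $d\equiv 0\pmod 4$), the inversion giving $R_i(-a)=R_{2i}(a)$ for odd $i\ge 3$ and $R_i(-a)=R_i(a)$ for $4\mid i$ is right, the handling of the prime $2$ via the special convention is fine, and the valuation matching via $(-a)^i-1=-(a^i+1)$ and $a^{2i}-1=(a^i-1)(a^i+1)$ (resp.\ $(-a)^i-1=a^i-1$) is exactly what is needed. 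Your observation that the case $i=1$ is just the defining convention $k_2(a)=k_1(-a)$ is also correct.
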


For convenience, given a classical group $L$, we put $\prk(L)$ to denote its dimension if $L$ is a linear or unitary group, and its Lie rank if $L$ is a symplectic or orthogonal group.

Define the following function on positive integers:

\begin{center}\label{eq:eta}
$\eta(k)=\left\{\begin{array}{l}
k,\text{ if }k \text{ is odd},\\
k/2,\text{ if }k\text{ is even.}\end{array}\right.$
\end{center}

Following~\cite{Vas15}, we introduce a function $\varphi$ in order to unify further arguments.
Namely, given a simple classical group $L$ over a field of order $q$ and a prime $r$ coprime to $q$,
we put
\begin{center}\label{eq:varphi}
$\varphi(r,L)=\left\{\begin{array}{l}
e(r,\varepsilon q),\text{ if }L=L_n^{\varepsilon}(q),\text{ where }\varepsilon\in\{+,-\};\\
\eta(e(r,q)),\text{ if }L\text{ is symplectic or orthogonal.}\end{array}\right.$
\end{center}

\begin{lem}{\em\cite[Lemma 2.4]{Vas15}}\label{l:adj}
Let $L$ be a simple classical group over a field of order $q$ and characteristic $p$, and let $\prk(L)=n\geq4$.
\begin{enumerate}[{\em(i)}]
\item If $r\in\pi(L)\setminus\{p\}$, then $\varphi(r, L)\leq n$.
\item If $r$ and $s$ are distinct primes from $\pi(L)\setminus\{p\}$ with $\varphi(r, L)\leq n/2$ and
$\varphi(s, L)\leq n/2$, then $r$ and $s$ are adjacent in $\Gamma(L)$.
\item If $r$ and $s$ are distinct primes from $\pi(L)\setminus\{p\}$ with $n/2<\varphi(r,L)\leq n$
and $n/2<\varphi(s,L)\leq n$, then $r$ and $s$ are adjacent in $\Gamma(L)$ if and only if
$e(r,q)=e(s,q)$.
\item If $r$ and $s$ are distinct primes from $\pi(L)\setminus\{p\}$ and $e(r,q) = e(s,q)$,
then $r$ and $s$ are adjacent in $\Gamma(L)$.
\end{enumerate}
\end{lem}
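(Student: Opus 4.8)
The plan is to reduce adjacency of two primes $r,s\in\pi(L)\setminus\{p\}$ to the existence of a suitable maximal torus. Since $r$, $s$, and $rs$ are all coprime to $p$, any element of one of these orders in $L$ is semisimple, and it lifts to a semisimple element of the same order in the universal version $\bar L$ of $L$ (that is, $SL_n^\varepsilon(q)$, $Sp_{2n}(q)$, or $\mathrm{Spin}^\varepsilon_m(q)$) provided the order is coprime to $|Z(\bar L)|$. In case~(iii) this coprimality is automatic: $\varphi(r,L)>n/2\ge2$ forces $e(r,q)\ge3$, hence $r\ne2$, and $|Z(\bar L)|$ divides $q^2-1$ while $r\nmid q^2-1$; in cases~(ii) and~(iv) one instead constructs the desired element with its $r$- and $s$-parts non-central. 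Thus $rs\in\omega(L)$ if and only if some maximal torus $T$ of $\bar L$ satisfies $rs\mid\exp(T)$.

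Next I would use the description of the maximal tori. For $L=L_n^\varepsilon(q)$ the maximal tori of $\bar L$ are, up to the determinant-one condition and factoring out the centre, direct products $\prod_j\mathbb{Z}_{|(\varepsilon q)^{d_j}-1|}$ indexed by compositions $(d_j)$ of $n$ (the absolute value uniformly covers the unitary factors $q^{d}-(-1)^{d}$), so a prime $r\ne p$ divides the factor attached to a block of size $d$ exactly when $\varphi(r,L)=e(r,\varepsilon q)$ divides $d$; here one uses that $e(r,-q)$ depends only on $e(r,q)$ and that this dependence is a bijection of the positive integers, as the earlier lemma on $k_i(-a)$ shows. For symplectic and orthogonal $L$ the tori are assembled analogously from cyclic factors $\mathbb{Z}_{q^{d}\pm1}$, but now each block carries a plus/minus type: a prime with $e(r,q)=k$ occupies a block of size $\eta(k)$, of plus type when $k$ is odd and minus type when $k$ is even, and more generally blocks whose size and type are thereby constrained. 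Part~(i) drops out at once from the order formula for $|L|$ as a product of a $p$-power with cyclotomic values coming from blocks of size at most $n$: every $r\in\pi(L)\setminus\{p\}$ divides such a value, so $\varphi(r,L)\le n$.

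For the adjacency statements I would run the following combinatorial analysis, cleanest for linear and unitary groups. If $\varphi(r,L)\le n/2$ and $\varphi(s,L)\le n/2$, then the composition of $n$ having one part divisible by $\varphi(r,L)$, one part divisible by $\varphi(s,L)$, and all remaining parts equal to $1$ fits into $n$ (merge the two distinguished parts when they coincide), giving a torus $T$ with $rs\mid\exp(T)$; this yields~(ii). The case $\varphi(r,L)=\varphi(s,L)$ needs only $\varphi\le n$, i.e.\ part~(i), and holds in particular whenever $e(r,q)=e(s,q)$ (for symplectic and orthogonal groups the $\eta$-value and the block type then both agree), which gives~(iv). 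If instead $n/2<\varphi(r,L),\varphi(s,L)\le n$, no composition of $n$ can carry two distinct parts each large enough, so a common torus can only come from a single block, which then must serve both primes; since both $\varphi$-values lie in $(n/2,n]$, tracking sizes (and, in the symplectic/orthogonal case, types) shows this is possible exactly when $\varphi(r,L)=\varphi(s,L)$, equivalently $e(r,q)=e(s,q)$. The nonadjacency half of~(iii) is then the assertion that, when $e(r,q)\ne e(s,q)$, no maximal torus of $\bar L$ works, combined with the lifting argument of the first paragraph.

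The main obstacle is precisely the ``only if'' direction of~(iii) in the symplectic and orthogonal cases, and the symplectic/orthogonal instances of~(ii) and~(iv): these demand the exact list of maximal-torus orders of $Sp_{2n}(q)$ and $\Omega^\varepsilon_m(q)$, careful accounting of which combinations of plus- and minus-type blocks actually occur, attention to the index-two (spinor norm) distinction between $\Omega$ and $SO$, and a verification that the constructed element of order $rs$ does not have smaller order modulo the centre when $2\in\{r,s\}$ (which can happen only in~(ii) and~(iv)). Since all of this is contained in the explicit, Lie-type-by-Lie-type adjacency criteria underlying~\cite{Vas15}, the economical route — and the one I would actually write — is to invoke those criteria and reorganize them into the uniform form above by means of $\varphi$, carrying out the torus bookkeeping only in the cases not already isolated there.
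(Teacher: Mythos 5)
The paper does not prove this lemma: it is quoted verbatim from \cite[Lemma~2.4]{Vas15}, so there is no internal proof to compare against. Your sketch correctly identifies the semisimple-element/maximal-torus mechanism behind that adjacency criterion and, like the paper, ultimately defers the delicate case analysis (the symplectic/orthogonal block types, the centre issues when $2\in\{r,s\}$, the ``only if'' half of (iii)) to \cite{Vas15}, which is the appropriate resolution here.
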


Item (iv) of the previous lemma can be generalized as follows (cf. \cite[Lemma~2.13]{Vas15}).

\begin{lem}\label{l:2.13}
Let $m$ be a positive integer and $L$ a simple classical group over a field of order $q$ and characteristic $p$.
For $j=1,\ldots,m$, suppose that pairwise distinct primes $r_j$ lie in $\pi(L)\setminus\{p\}$ and put
$i_j=e(r_j,q)$. If $i_1,i_2,\ldots,i_m$ are greater than $2$ and pairwise distinct, then
$r_1r_2\cdots r_m\in\omega(L)$ if and only if
$k_{i_1}(q)k_{i_2}(q)\cdots k_{i_m}(q)\in\omega(L)$.
\end{lem}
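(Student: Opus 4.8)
The plan is to split into the two implications, the nontrivial one being $r_1\cdots r_m\in\omega(L)\Rightarrow k_{i_1}(q)\cdots k_{i_m}(q)\in\omega(L)$. I would first dispose of the easy direction: each $r_j$ is a primitive prime divisor of $q^{i_j}-1$ with $i_j\neq2$, hence $r_j$ divides $k_{i_j}(q)$, and distinct indices $i_j$ produce pairwise coprime numbers $k_{i_j}(q)$ since every prime divisor of $k_{i_j}(q)$ has multiplicative order $i_j$ modulo $q$. Thus $r_1\cdots r_m$ divides $k_{i_1}(q)\cdots k_{i_m}(q)$, so membership of the product of the $k$'s in $\omega(L)$ forces membership of the product of the $r$'s (take the subgroup of order $r_1\cdots r_m$ inside the relevant cyclic subgroup).

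For the other direction I would work inside a matrix group covering $L$ and enlarge each $r_j$-part to the full $k_{i_j}(q)$-part \emph{inside the same torus}. The point of the hypothesis $i_j>2$ is that then every $r_j$ is odd and divides neither $q-1$ nor $q+1$, so $r_1\cdots r_m$ is coprime to $p$ and to the order of the centre of the corresponding simply connected group; consequently an element of $L$ of order $r_1\cdots r_m$ is semisimple and lifts to a semisimple element $g$ of the same order in the ambient connected group $\bar L$ (one of $GL_n(q)$, $GU_n(q)$, $Sp_{2n}(q)$, $SO_{2n+1}(q)$, $SO^{\pm}_{2n}(q)$, according to the type of $L$). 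Then $g$ lies in a maximal torus $T=C_1\times\dots\times C_s$, each $C_l$ cyclic of order $q^{d_l}-\varepsilon_l$ with $\varepsilon_l\in\{1,-1\}$ and $d_1,\dots,d_s$ a signed partition of $\prk(L)$. For each $j$ I fix an index $l(j)$ with $r_j\mid|C_{l(j)}|$, and the key observation is that $k_{i_j}(q)$ then divides $|C_{l(j)}|$ as well: if $\varepsilon_{l(j)}=1$ then $i_j\mid d_{l(j)}$, so $k_{i_j}(q)\mid q^{i_j}-1\mid q^{d_{l(j)}}-1$; if $\varepsilon_{l(j)}=-1$ then $i_j\mid 2d_{l(j)}$ while $i_j\nmid d_{l(j)}$, so $k_{i_j}(q)\mid q^{2d_{l(j)}}-1$ is coprime to $q^{d_{l(j)}}-1$ and hence divides $q^{d_{l(j)}}+1$. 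Since the numbers $k_{i_j}(q)$ sent to a fixed $C_l$ are pairwise coprime, their product divides the cyclic order $|C_l|$; choosing in each $C_l$ an element of exactly that order and combining them produces $h\in T$ of order $k_{i_1}(q)\cdots k_{i_m}(q)$. As this number is again coprime to the centre, its image in $L$ keeps the order, giving $k_{i_1}(q)\cdots k_{i_m}(q)\in\omega(L)$.

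The routine but unavoidable part is the bookkeeping: naming the correct connected group and the standard description of its maximal tori for each of the linear, unitary, symplectic and two orthogonal families, and checking that passing between $L$, $\bar L$ and the simply connected cover affects nothing on the primes in play (all having $e(\cdot,q)=i_j>2$). This parallels the machinery of \cite[Section~2]{Vas15}, and the hypotheses that the $i_j$ exceed $2$ and are pairwise distinct are exactly what eliminates further case analysis. The only place I expect friction is keeping the torus discussion uniform for the orthogonal types, where admissible signed partitions are slightly constrained; but since we never alter the torus $T$ already realising $r_1\cdots r_m$ and only rechoose elements within it, those constraints are never invoked.
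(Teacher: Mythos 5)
Your argument is correct. Note that the paper does not supply a proof of this lemma at all: it is stated with a pointer to \cite[Lemma~2.13]{Vas15}, and the intended justification runs through the explicit arithmetic descriptions of $\omega(L)$ quoted as Lemmas~\ref{l:spec} and~\ref{l:spec2} (and their orthogonal analogues in \cite{But11}): every element order divides some $p^a\cdot[q^{n_1}-\varepsilon_1,\ldots,q^{n_s}-\varepsilon_s]$ for an admissible tuple, and for a prime $r$ with $e(r,q)=i>2$ one has $r\mid q^{d}-\varepsilon$ if and only if $k_i(q)\mid q^{d}-\varepsilon$, so $r_1\cdots r_m$ and $k_{i_1}(q)\cdots k_{i_m}(q)$ divide exactly the same admissible numbers. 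Your maximal-torus computation in the ambient matrix group is precisely the mechanism from which those descriptions are derived, so the two routes are essentially equivalent; the extra cost of yours is the bookkeeping of passing from $L$ up to $GL_n(q)$, $GU_n(q)$, $Sp_{2n}(q)$ or $SO^{\pm}_{m}(q)$ and back, which you handle correctly: since every prime divisor of every $k_{i_j}(q)$ has $e(\cdot,q)=i_j>2$, it is odd and divides neither $q-1$ nor $q+1$, so the orders in play are coprime to the relevant centres and to the indices $[GL:SL]$, $[GU:SU]$, $[SO:\Omega]$, and orders are preserved in both lifting and projecting. The divisibility step ($k_{i_j}(q)\mid q^{d}-1$ when $i_j\mid d$, and $k_{i_j}(q)\mid q^{d}+1$ when $i_j\mid 2d$ but $i_j\nmid d$), the pairwise coprimality of the $k_{i_j}(q)$ for distinct $i_j>2$, and the divisor-closedness argument for the converse implication are all sound.
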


We need the descriptions of the spectra for linear, unitary and symplectic simple groups obtained in~\cite{But08,But11}.

\begin{lem}{\em\cite[Corollary~3]{But08}}\label{l:spec}
Let $G=L^{\varepsilon}_{n}(q)$, where $n\geq2$, $\varepsilon\in\{+,-\}$, and $q$ is a power of a prime $p$. Put $d=(n, q-\varepsilon1)$.
Then $\omega(G)$ consists of all divisors of the following numbers:
\begin{enumerate}[{\em(i)}]
\item{$\frac{q^n-(\varepsilon1)^n}{d(q-\varepsilon1)}$;}
\item{$\frac{[q^{n_1}-(\varepsilon1)^{n_1}, q^{n_2}-(\varepsilon1)^{n_2}]}{(n/(n_1,n_2),q-\varepsilon1)}$ for $n_1, n_2 >0$ such that $n_1+n_2=n$;}
\item{$[q^{n_1}-(\varepsilon1)^{n_1}, q^{n_2}-(\varepsilon1)^{n_2},\ldots, q^{n_s}-(\varepsilon1)^{n_s}]$ for $s\geq3$ and for
$n_1, n_2,\ldots,n_s>0$ such that $n_1 + n_2 +\ldots+ n_s = n$;}
\item{ $p^{k}\cdot\frac{q^{n_1}-(\varepsilon1)^{n_1}}{d}$ for $k, n_1 > 0$ such that $p^{k-1} + 1 + {n_1}=n$;}
\item{$p^k\cdot[q^{n_1}-(\varepsilon1)^{n_1}, q^{n_2}-(\varepsilon1)^{n_2},\ldots, q^{n_s}-(\varepsilon1)^{n_s}]$ for $s\geq 2$ and $k, n_1, n_2,\ldots, n_s > 0$ such that $p^{k-1} + 1 + n_1 + n_2 +\ldots + n_s = n$;}
\item $p^k$ if $p^{k-1} + 1 = n$ for $k > 0$.
\end{enumerate}
\end{lem}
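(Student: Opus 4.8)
The plan is to compute $\omega(G)$ in three stages: first for the general (unitary) linear group $\Gamma=GL_n^\varepsilon(q)$, then for $S=SL_n^\varepsilon(q)\le\Gamma$, and finally for $L=S/Z$, where $Z=Z(S)$ is cyclic of order $d=(n,q-\varepsilon1)$. Since $\omega$ is closed under taking divisors, in each case it suffices to describe the maximal element orders and, conversely, to check that each number appearing in the list is realised by some element.

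For $\Gamma$ I would use the Jordan decomposition $g=su=us$ with $s$ semisimple, $u$ unipotent, $[s,u]=1$, and $|g|=|s|\cdot|u|$, where $|s|$ is coprime to $p$ and $|u|$ is a power of $p$. Every semisimple element lies in a maximal torus, and the maximal tori of $\Gamma$ are parametrised by partitions of $n$, the torus attached to a partition with parts $n_1,\dots,n_\ell$ having the form $\prod_i\mathbb{Z}_{q^{n_i}-(\varepsilon1)^{n_i}}$; hence the orders of semisimple elements are precisely the divisors of $[q^{n_1}-(\varepsilon1)^{n_1},\dots,q^{n_\ell}-(\varepsilon1)^{n_\ell}]$ over all partitions. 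A unipotent element is determined up to conjugacy by its Jordan block sizes, so its order is $p^k$ with $k$ minimal subject to $p^k$ being at least the largest block size, and the possible values are the $p^k$ with $p^{k-1}+1\le n$. For a mixed $g$ one decomposes the natural module along the primary components of $s$: then $u$ lies in $C_\Gamma(s)\cong\prod_j GL_{m_j}^{\varepsilon_j}(q^{\delta_j})$ with $\sum_j m_j\delta_j=n$, and $|u|=p^k$ forces some $m_j\ge p^{k-1}+1$. Reserving a block of dimension $p^{k-1}+1$ for the unipotent part and letting the semisimple part range freely over a complementary decomposition $n_1+\dots+n_\ell=n-(p^{k-1}+1)$ yields the orders $p^k\cdot[q^{n_1}-(\varepsilon1)^{n_1},\dots,q^{n_\ell}-(\varepsilon1)^{n_\ell}]$; the degenerate cases $u=1$, $s$ scalar, or $s=1$ then account for the remaining items at the level of $\Gamma$.

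To descend to $S$ and $L$, note that every element of $L$ is the image of some $g\in S$, that $|gZ|=|g|/|\langle g\rangle\cap Z|$, and that $|Z|=d$ is coprime to $p$; hence the unipotent part of $g$ never meets $Z$, so only its semisimple part affects the quotient. So for each maximal torus $T$ of $\Gamma$ I would compute $T\cap S$ together with the largest order of an element of $T\cap S$ modulo $Z$. This step is purely number-theoretic: the denominators $d$, $(n/(n_1,n_2),q-\varepsilon1)$, and the like arise exactly as the possible orders of $\langle g\rangle\cap Z$, computed from greatest common divisors of $d$ with the numbers $q^{n_i}-(\varepsilon1)^{n_i}$. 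Conversely, for each number in the list one exhibits an element of $L$ of that order by producing a suitable element of $T\cap S$ (multiplied by a unipotent block of the right size) and passing to the quotient.

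I expect the main obstacle to be precisely this last bookkeeping over $S$ and $L$: one must show that the orders of elements of $T\cap S$ modulo $Z$ are exactly the divisors of the listed numbers, neither over-dividing by $d$ (which requires lower bounds on element orders) nor under-dividing it (which requires realising the full denominator). The unitary case $\varepsilon=-$ forces one to carry the signs $(\varepsilon1)^{n_i}$ and the parities of the $n_i$ through every computation, and the small-characteristic behaviour of unipotent orders (for $p=2$ and $p=3$) needs separate attention; the semisimple and unipotent parts taken in isolation are classical and cause no trouble.
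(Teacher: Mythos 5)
The paper gives no proof of this statement: it is imported verbatim from Buturlakin \cite[Corollary~3]{But08}, so there is nothing internal to compare your argument against. Your outline follows the same route as that source --- Jordan decomposition in $GL_n^{\varepsilon}(q)$, semisimple orders read off from the maximal tori attached to partitions of $n$, unipotent orders from Jordan block sizes, and then descent through $SL_n^{\varepsilon}(q)$ to the simple quotient --- and it is sound as far as it goes. Be aware, however, that what you set aside as ``bookkeeping'' (pinning down the exact denominators $d(q-\varepsilon1)$ and $(n/(n_1,n_2),q-\varepsilon1)$ in items (i)--(ii), and showing that no denominator survives in (iii) and (v), i.e.\ once $s\geq3$, or $s\geq2$ in the presence of a unipotent part) is precisely the nontrivial content of the corollary, so a complete write-up would still require essentially all of the work carried out in \cite{But08}.
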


\begin{lem}{\em\cite[Corollary~2]{But11}}\label{l:spec2}
Let $G=S_{2n}(q)$, where $n\geq2$ and $q$ is a power of an odd prime number $p$.
Then $\omega(G)$ consists of all divisors of the following numbers:
\begin{enumerate}[{\em(i)}]
\item{$\frac{q^n\pm1}{2}$;}
\item{$[q^{n_1}+\varepsilon_11, q^{n_2}+\varepsilon_21,\ldots, q^{n_s}+\varepsilon_s1]$ for all $s\geq2$, $\varepsilon_i\in\{+,-\}$, $1\leq i\leq s$, and positive $\{n_j\}$,
with $n_1+n_2+\ldots+n_s= n$;}
\item{$p^k\cdot[q^{n_1}+\varepsilon_11, q^{n_2}+\varepsilon_21,\ldots, q^{n_s}+\varepsilon_s1]$ for all $s\geq1$, $\varepsilon_i\in\{+,-\}$, $1\leq i\leq s$, and positive $k$ and
$\{n_j\}$, with $p^{k-1}+1+2n_1+2n_2+\ldots+2n_s=2n$};
\item{$p^k$ if $p^{k-1}+1=2n$ for some $k>1$.}
\end{enumerate}
\end{lem}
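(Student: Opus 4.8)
The plan is to follow the standard route to the set of element orders of a finite group of Lie type — Jordan decomposition, maximal tori, unipotent classes — specialised to $G=S_{2n}(q)=PSp_{2n}(q)$ with $q$ odd. Write $\widehat G=Sp_{2n}(q)$, so that $Z=Z(\widehat G)=\{\pm I\}$ has order $(2,q-1)=2$ and $G=\widehat G/Z$. Every $g\in G$ lifts to $\tilde g\in\widehat G$ with $|g|\in\{|\tilde g|,\,|\tilde g|/2\}$, and $\tilde g=\tilde s\tilde u=\tilde u\tilde s$ with $\tilde s$ semisimple and $\tilde u$ unipotent. So the task splits into describing the orders $|\tilde s|$, the orders $|\tilde u|$, the products $|\tilde s\tilde u|$, and finally the cases in which the factor $1/2$ occurs.

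Semisimple part. The $\widehat G$-classes of maximal tori are indexed by the conjugacy classes of the Weyl group $W(C_n)$, equivalently by unordered tuples $(n_1,\dots,n_s)$ with $n_1+\dots+n_s=n$ together with signs $\varepsilon_i\in\{+,-\}$, the corresponding torus being a direct product of cyclic groups whose orders are the numbers $q^{n_i}\pm1$ (matching, after a sign relabelling, the factors in item~(ii)). Hence the orders of semisimple elements of $\widehat G$ are exactly the divisors of the numbers $[\,q^{n_1}+\varepsilon_1 1,\dots,q^{n_s}+\varepsilon_s 1\,]$ over all such data. For $s=1$ the only torus of this order is cyclic and contains $-I$, so the images in $G$ have orders dividing $(q^n\pm1)/2$: this is item~(i). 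For $s\ge2$ one can always choose an element attaining the full order $[\,q^{n_1}+\varepsilon_1 1,\dots,q^{n_s}+\varepsilon_s 1\,]$ and lying outside $Z$ (for instance a generator of one cyclic factor together with the identity in another when the $2$-parts of the factors coincide), so this order survives in $G$: this is item~(ii).

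Unipotent and mixed part. A unipotent element of $\widehat G$ is determined up to conjugacy by its Jordan type, a partition $\lambda\vdash 2n$ in which every odd part has even multiplicity (the symplectic constraint for odd $p$), and has order $p^{\lceil\log_p\lambda_1\rceil}$; so the $p$-power orders are governed by how small a block of even size $p^{k-1}+1$ one may insert. For a general element $\tilde u\in C_{\widehat G}(\tilde s)^{\circ}$, and the centraliser of a semisimple element is (a central quotient of) a product $\prod_j GL^{\varepsilon_j}_{m_j}(q^{d_j})\times Sp_{2m}(q)$ with $2m+\sum_j 2m_jd_j=2n$; the unipotent part lives in the $Sp_{2m}(q)$-factor and contributes a $p$-power, the semisimple part contributes an lcm of torus factors inside the whole centraliser, and $|\tilde s\tilde u|=|\tilde s|\cdot|\tilde u|$. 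Recording the dimensions used, one gets orders $p^k\cdot[\,q^{n_1}+\varepsilon_1 1,\dots,q^{n_s}+\varepsilon_s 1\,]$ with $p^{k-1}+1+2(n_1+\dots+n_s)=2n$, which is item~(iii) (here the central $2$ is always absorbed once $k\ge1$), and the single-block extreme $p^{k-1}+1=2n$ gives item~(iv). Conversely every number on the list is realised by an explicit element constructed as above, so $\omega(G)$ is precisely the set of their divisors.

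I expect the delicate part to be the bookkeeping at the two boundaries. First, one must show that the factor $1/2$ from $Z(\widehat G)$ intervenes only in case~(i): for every torus with $s\ge2$, and for every mixed element with a nontrivial unipotent part, one has to exhibit an element attaining the full order while avoiding $-I$ — a short but case-sensitive argument on the $2$-parts of the cyclic factors. Second, one must pin down exactly the minimal dimension $p^{k-1}+1$ of a unipotent (or of the symplectic residue of a mixed) element of order $p^k$, taking into account the parity restriction on Jordan blocks — here it is essential that $p$ is odd, so that $p$-powers are odd — and verify there is no cheaper mixed configuration. These are precisely the points at which the careful computation of~\cite{But11} is needed, and I would ultimately appeal to it; the structural skeleton above is what organises that computation.
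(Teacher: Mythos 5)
This lemma is not proved in the paper: it is quoted verbatim from Buturlakin \cite[Corollary~2]{But11}, so there is no internal argument to compare against. Your outline (Jordan decomposition in $Sp_{2n}(q)$, maximal tori indexed by signed partitions of $n$, unipotent Jordan types, centralizers of semisimple elements, and the bookkeeping of the central involution $-I$) is the standard route and is essentially the one followed in that reference; since you, like the paper, ultimately defer the two genuinely delicate points --- exactly when the factor $1/2$ survives in $PSp_{2n}(q)$, and the minimal block size $p^{k-1}+1$ needed for a unipotent part of order $p^k$ --- to \cite{But11}, your proposal is a faithful account of how the cited result is obtained rather than an independent proof, which matches the paper's own treatment.
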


The following two lemmas are almost direct corollaries of the above descriptions.

\begin{lem}\label{l:pow2} Suppose that $L$ is a simple classical group over a field of odd characteristic. If $\prk(L)\geq2^k+2$ for an integer $k$, then $2^{k+2}\in\omega(L)$.
\end{lem}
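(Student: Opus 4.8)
The plan is to exhibit, inside $\omega(L)$, a number divisible by $2^{k+2}$, built from an integer of the form $q^{2^k}-1$, which is automatically ``almost'' a power of $2$ when $q$ is odd, and to reduce the symplectic and orthogonal families to the linear one. First I would record the $2$-adic estimate. Write $v_2(x)$ for the exponent of $2$ in a nonzero integer $x$. Since $q$ is odd, exactly one of $q-1,q+1$ is congruent to $2$ modulo $4$, so $v_2(q^2-1)=v_2(q-1)+v_2(q+1)\geq 3$; and for $j\geq 1$ the integer $q^{2^j}$ is an odd square, hence $\equiv 1\pmod 8$, so $v_2(q^{2^j}+1)=1$ and $v_2(q^{2^{j+1}}-1)=v_2(q^{2^j}-1)+1$. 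Iterating, $v_2(q^{2^k}-1)\geq k+2$ for every $k\geq 1$, which is exactly the bound we want.

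Next I would treat the linear and unitary groups $L=L^\varepsilon_n(q)$ with $n=\prk(L)\geq 2^k+2$ directly via Lemma~\ref{l:spec}. For $k\geq 1$ the exponent $2^k$ is even, so $(\varepsilon 1)^{2^k}=1$; I apply item~(iii) with $s=3$ and parts $(n_1,n_2,n_3)=(2^k,\,1,\,n-2^k-1)$, which are all positive precisely because $n\geq 2^k+2$. The corresponding number in $\omega(L)$ is a multiple of $[q^{2^k}-1,\,q-\varepsilon 1,\,q^{n_3}-(\varepsilon 1)^{n_3}]$, hence of $q^{2^k}-1$, hence of $2^{k+2}$; since $\omega(L)$ is closed under divisors, $2^{k+2}\in\omega(L)$. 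For $k=0$ I need $4\in\omega(L)$ with $n\geq 3$: if $n\geq 4$, item~(iii) with parts $(2,1,n-3)$ yields a multiple of $q^2-1$ in $\omega(L)$; if $n=3$, item~(ii) with $(n_1,n_2)=(2,1)$ gives $(q^2-1)/(3,q-\varepsilon 1)\in\omega(L)$, and $(3,q-\varepsilon 1)$ is odd, so this number still has $v_2\geq 3$. In every case some element order is divisible by $2^{k+2}$.

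Then, for $L$ symplectic or orthogonal, I would reduce to the linear case. The natural matrix group covering $L$ contains, as the Levi factor of a suitable parabolic subgroup, a copy of $SL_m(q)$ with $m=\prk(L)$ — namely $m=n$ for $S_{2n}(q)$, $P\Omega_{2n+1}(q)$ and $P\Omega^+_{2n}(q)$, and $m=n-1=\prk(L)$ for $P\Omega^-_{2n}(q)$. Its image in the simple quotient $L$ is a quotient of $SL_m(q)$ and therefore has $L_m(q)$ as a homomorphic image; since the order of an element of a quotient divides the order of a preimage, $\omega$ does not grow under passing to subgroups or to quotients, so $\omega(L_m(q))\subseteq\omega(L)$. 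As $m=\prk(L)\geq 2^k+2$, the linear case gives $2^{k+2}\in\omega(L_m(q))\subseteq\omega(L)$. (Alternatively, the symplectic case follows directly from Lemma~\ref{l:spec2} by the same partition argument as for the linear groups.)

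The only genuine subtlety is that the ``$\pm$'' denominator in Lemma~\ref{l:spec}(ii) — and, for orthogonal groups, the central quotient separating $\Omega$ from $P\Omega$ — could in principle strip off the very factors of $2$ we are after. I would sidestep this by using the denominator-free item~(iii) of Lemma~\ref{l:spec} whenever $n$ is large enough to admit a three-part partition containing $2^k$, by checking the single leftover case $k=0$, $n=3$ by hand, and — for the orthogonal groups, whose spectra are not recorded in the excerpt — by the reduction above to $L_m(q)$ with $m=\prk(L)$, which discards no prime powers at all.
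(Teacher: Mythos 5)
Your core argument is exactly the paper's: both proofs obtain $2^{k+2}\in\omega(L)$ from the $2$-adic valuation of $q^{2^k}-1$, using $8\mid q^2-1$ and $q^{2^k}-1=(q^2-1)\prod_{i=1}^{k-1}(q^{2^i}+1)$, and both locate a multiple of $q^{2^k}-1$ in $\omega(L)$ via Lemma~\ref{l:spec} for the linear and unitary groups. The difference lies in the remaining families: the paper simply cites the explicit spectra of the symplectic and orthogonal groups (Lemma~\ref{l:spec2} and \cite[Corollaries~6, 8, 9]{But11}), whereas you reduce them to the linear case through a Levi subgroup $SL_m(q)$ together with the observation that $\omega$ is divisor-closed and hence shrinks under both subgroups and quotients. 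That reduction is sound and has the advantage of not needing the orthogonal spectra, which indeed are not reproduced in the paper; its cost is bookkeeping with ranks, and there is one slip to repair. Under the convention of \cite{Vas15} adopted here, $\prk(O^-_{2n}(q))=n$, not $n-1$: primitive prime divisors of $q^{2n}-1$ lie in $\pi(O^-_{2n}(q))$ and have $\eta(e(r,q))=n$, so Lemma~\ref{l:adj}(i) forces $\prk\geq n$. The minus-type Levi only provides $SL_{n-1}(q)$, so for these groups you land in $L_m(q)$ with $m\geq 2^k+1$, one short of the hypothesis of your linear-group step. This is easily patched: when $m=2^k+1$, use Lemma~\ref{l:spec}(ii) with the partition $(2^k,1)$, whose denominator divides the odd number $2^k+1$ and therefore cannot remove any factor of $2$ from $q^{2^k}-1$. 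Finally, your separate treatment of $k=0$ is extra care; the paper's product formula tacitly assumes $k\geq1$, which is all that its applications require.
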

\begin{proof} Suppose that $u$ is the order of the underlining field of~$L$.
It follows from Lemmas~\ref{l:spec}, \ref{l:spec2} and \cite[Corollaries~6, 8, 9]{But11}
that $u^{2^k}-1\in\omega(L)$.
Note that $u^2-1$ is divisible by~8. Since $u^{2^k}-1=(u^2-1)\prod\limits_{i=1}^{k-1}(u^{2^i}+1)$,
we infer that $2^{k+2}$ divides $u^{2^k}-1$, so $2^{k+2}\in\omega(L)$.
\end{proof}

For a real number $x$, denote by $[x]$ the integral part of $x$ that is the largest integer less than or equal to~$x$.  

\begin{lem}\label{l:spec Ln(2)} Suppose that $L=L_n^\varepsilon(q)$, where $\varepsilon\in\{+,-\}$. Then the following hold:
\begin{enumerate}[{\em(i)}]
\item if $n\geq 12$, then $\Omega=\{r_i(\varepsilon{q})~|~n/2<i\leq n\}$ is a coclique of size $[(n+1)/2]$ in $\Gamma(L);$
\item if $n=2^l$ and $q$ is even, then $2^l\in\omega(L)$ and $2^{l+1}\not\in\omega(L);$
\item if $a\in\omega(L)$, then $a\leq q^n/(q-1)$.
\end{enumerate}
\end{lem}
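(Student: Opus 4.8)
For~(i) the plan is to exhibit $\Omega=\{r_i(\varepsilon q)\mid n/2<i\le n\}$ as a coclique in $\Gamma(L)$ by means of Lemma~\ref{l:adj}(iii). Since $n\ge12$, every index $i$ with $n/2<i\le n$ satisfies $i\ge7$, so no exceptional pair of Zsigmondy's theorem arises and all the primes $r_i(\varepsilon q)$ exist (for $\varepsilon=-$ one reduces to a set $R_j(q)$ with $j\le 2i$ through the identities relating $k_i(-q)$ to $k_{2i}(q)$ and to $k_i(q)$); each such prime is coprime to $p$ and divides $|L|$, hence lies in $\pi(L)\setminus\{p\}$, and by definition $\varphi(r_i(\varepsilon q),L)=e(r_i(\varepsilon q),\varepsilon q)=i$ falls in the interval $(n/2,n]$. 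For distinct $i,j$ in this interval, $e(r_i(\varepsilon q),\varepsilon q)=i\ne j=e(r_j(\varepsilon q),\varepsilon q)$; since $e(r,q)$ is a well-defined injective function of $e(r,\varepsilon q)$ on the positive integers (the identity for $\varepsilon=+$; the piecewise map $m\mapsto 2m,\ m/2,\ m$ according as $m$ is odd, $m\equiv 2\pmod4$, or $4\mid m$, for $\varepsilon=-$), we obtain $e(r_i(\varepsilon q),q)\ne e(r_j(\varepsilon q),q)$, so Lemma~\ref{l:adj}(iii) makes $r_i(\varepsilon q)$ and $r_j(\varepsilon q)$ nonadjacent. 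Hence $\Omega$ is a coclique, and counting the integers in $(n/2,n]$ gives $|\Omega|=[(n+1)/2]$.

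For~(ii) I would read everything off Lemma~\ref{l:spec} with $p=2$, noting that $d=(n,q-\varepsilon 1)=1$ since $q-\varepsilon1$ is odd and $n$ is a power of $2$. To get $2^l\in\omega(L)$: if $l=1$ use item~(vi) with $k=1$; if $l\ge 2$ use item~(iv) with $k=l$ and $n_1=2^{l-1}-1\ge1$, so that $2^l\mid 2^l\cdot\tfrac{q^{n_1}-\varepsilon^{n_1}}{d}\in\omega(L)$. To get $2^{l+1}\notin\omega(L)$: since $q$ is even, every $q^m-\varepsilon^m$ is odd, so the numbers in items~(i)--(iii) are odd, and the $2$-part of any number in items~(iv)--(vi) is exactly the power $2^k$ occurring there; the corresponding defining equality gives $2^{k-1}\le n-1=2^l-1<2^l$, hence $k\le l$, and no element order of $L$ is divisible by $2^{l+1}$.

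For~(iii) the plan is to bound each of the six families in Lemma~\ref{l:spec} by $q^n/(q-1)$. For the ``semisimple'' numbers in items~(i)--(iii), which up to division by a positive integer are least common multiples $[q^{m_1}-\varepsilon^{m_1},\dots,q^{m_s}-\varepsilon^{m_s}]$ with $\sum m_i\le n$, I would use that any two of the factors $q^{m_i}-\varepsilon^{m_i}$ have a common divisor at least $q-1$ (in fact at least $q+1$ when $\varepsilon=-$, both being divisible by $q+1$), so that iterating $[a,b]=ab/(a,b)$ bounds the lcm by $\prod_i(q^{m_i}-\varepsilon^{m_i})$ divided by $(q-1)^{s-1}$ (resp. by $(q+1)^{s-1}$); a short estimate then yields a value strictly below $q^{\sum m_i}/(q-1)\le q^n/(q-1)$, where for $\varepsilon=-$ one must retain the factor $q+1$ rather than use the wasteful bound $q^{m_i}-\varepsilon^{m_i}\le q^{m_i}+1$. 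For the ``mixed'' numbers in items~(iv)--(vi), of the shape $p^k\cdot[q^{m_1}-\varepsilon^{m_1},\dots]$ with $p^{k-1}+1+\sum m_i=n$, the crucial input is the inequality $p^k\le q^{p^{k-1}}$ (equivalently $k\le p^{k-1}$): combined with the lcm bound above it gives $p^k\cdot[\dots]\le q^{n-1}+q^{n-2}<q^n/(q-1)$ for items~(iv)--(v) (here $n\ge 3$ since $p^{k-1}$ and the $m_i$ are positive), while for item~(vi) one simply has $p^k\le q^{p^{k-1}}=q^{n-1}<q^n/(q-1)$.

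The only genuinely delicate bookkeeping lies in~(iii) for $\varepsilon=-$ and for the mixed families, where the extra gcd-factor $q+1$ and the constraint $p^{k-1}<n$ must both be kept in play since the crudest estimates fall just short; everything else is elementary. In~(i) the one subtlety is the translation between $e(r,q)$ and $e(r,\varepsilon q)$ when $\varepsilon=-$, which is handled by the injectivity recorded above, and (ii) is immediate from Lemma~\ref{l:spec}.
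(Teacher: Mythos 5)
Your proposal is correct, but it takes a genuinely different route from the paper: the authors dispose of all three items by citation --- (i) is quoted from \cite[Prop.~6.9 and Table~8]{VasVd05}, (iii) from \cite[Lemma~1.3]{GMV2009}, and (ii) is attributed to Lemma~\ref{l:spec}(vi) --- whereas you reprove everything from Lemmas~\ref{l:adj} and~\ref{l:spec}. Your argument for (i) is essentially the proof behind the cited result: the one point that genuinely needs care, the injectivity of the passage from $e(r,\varepsilon q)$ to $e(r,q)$ when $\varepsilon=-$, is handled correctly by your piecewise map (which matches the $k_i(-a)=k_{2i}(a)$, $k_i(-a)=k_i(a)$ identities recorded in the paper), and Lemma~\ref{l:adj}(iii) then gives nonadjacency; note only that you should also record that each $r_i(\varepsilon q)$ lies in $\pi(L)\setminus\{p\}$, which is immediate. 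For (ii) your case analysis over all six items of Lemma~\ref{l:spec} is in fact more complete than the paper's one-line reference to item (vi) alone (which by itself only produces $p^k$ with $p^{k-1}+1=n$, i.e.\ covers $n=2$); your use of item (iv) with $k=l$, $n_1=2^{l-1}-1$ for $l\ge2$, together with the observation that the $2$-part of every listed number is exactly $2^k$ with $k\le l$, is the right way to fill this in. For (iii) your key observation --- that all the numbers $q^{m}-(\varepsilon1)^{m}$ occurring in one lcm share the common divisor $q-\varepsilon1$, so the lcm is at most $\prod_i(q^{m_i}-(\varepsilon1)^{m_i})/(q-\varepsilon1)^{s-1}$, combined with $p^k\le q^{p^{k-1}}$ for the mixed families --- is exactly what makes the unitary case close (the crude bound $q^{m}+1\le 2q^{m}$ without the gcd saving would not suffice for $q>2$), and this is precisely the content of the cited \cite[Lemma~1.3]{GMV2009}. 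What your approach buys is a self-contained verification from the spectrum description already present in the paper; what it costs is a page of elementary but fiddly estimates that the citations avoid.
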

\begin{proof}
The first assertion is a consequence of \cite[Prop.~6.9 and Table~8]{VasVd05}.
The second and the third ones follow from Lemma~\ref{l:spec}(vi) and \cite[Lemma~1.3]{GMV2009}, respectively.
\end{proof}

In three following lemmas, we concentrate on properties of the spectra of linear groups over field of order 2 and their direct products.

\begin{lem}\label{l:ga}
If $G=\Aut(L)$, where $L=L_n(2)$, $n=2^l\geq4$, then $2^{l+1}\in\omega(G)$.
\end{lem}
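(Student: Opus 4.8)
The statement to prove is that $2^{l+1}\in\omega(\Aut(L))$ when $L=L_n(2)$ with $n=2^l\ge4$. The plan is to exhibit an explicit element of order $2^{l+1}$ inside $\Aut(L)$ by combining a well-chosen unipotent element of $L$ (or of $\mathrm{PGL}_n(2)=L$, since the diagonal automorphisms are trivial here) with the graph automorphism, or alternatively a field-like automorphism. First I would recall from Lemma~\ref{l:spec Ln(2)}(ii) that the largest power of $2$ in $\omega(L)$ itself is exactly $2^l$, so the extra factor of $2$ must come from the outer part: $\Out(L_n(2))$ is generated by the inverse-transpose graph automorphism $\gamma$ of order $2$ (there are no diagonal or field automorphisms since $q=2$). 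So the task reduces to finding a unipotent $g\in L$ of order $2^l$ that is inverted (up to conjugacy in $L$) by $\gamma$ in a controlled way, so that the coset $g\gamma$ squares to something of order $2^l$, giving $|g\gamma|=2^{l+1}$.

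The key computation: take $g\in\mathrm{GL}_n(2)$ to be a single unipotent Jordan block $J_n$ of size $n=2^l$; then $g$ has order $2^{\lceil\log_2 n\rceil}=2^l$, and its image in $L$ still has order $2^l$. The graph automorphism sends $g$ to $(g^{-1})^{\mathsf T}$, which is conjugate in $\mathrm{GL}_n(2)$ to $g^{-1}$ (a single Jordan block of the same size), hence $g\gamma$ acting on $L\rtimes\langle\gamma\rangle$ satisfies $(g\gamma)^2 = g\cdot g^\gamma = g\cdot(g^{-1})^{\mathsf T}$. One must choose the identification (i.e. conjugate $g$ by a suitable element, using that $\gamma$ is only defined up to inner automorphisms) so that $g^\gamma = g$, i.e. so that $g$ is fixed by the chosen graph automorphism up to the square; concretely, conjugating $J_n$ by the anti-diagonal permutation matrix realizes a graph automorphism fixing the Jordan block, and then $(g\gamma)^2=g^2$ has order $2^{l-1}$, whence $g\gamma$ has order $2^l$ — which is not yet enough. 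The better approach is to pick $g$ of order $2^l$ with $g\gamma$ of order $2^{l+1}$ by instead taking $\gamma$ to invert $g$: arrange $g^\gamma=g^{-1}$, so $(g\gamma)^2=g g^{-1}=1$; also not enough. So one actually needs $g$ of order $2^l$ together with an outer automorphism $\sigma$ of order $2$ such that $\sigma$ centralizes $g$ and the extension is non-split on $\langle g\rangle$, i.e. there is $\tilde g\in L\langle\sigma\rangle$ with $\tilde g^2=g$; then $\tilde g$ has order $2^{l+1}$.

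Here is the cleanest route, which I expect to be the intended one: embed things via the natural module. Let $\sigma$ be the graph automorphism realized as $x\mapsto w (x^{-1})^{\mathsf T} w^{-1}$ for the longest-element permutation matrix $w$ (so $\sigma^2=1$ on $L$). Work inside $\mathrm{GL}_{n}(2)\rtimes\langle\sigma\rangle$ and look for a unipotent element $h$ of $\mathrm{GL}_{2n}$-type; more efficiently, use the known structure: $L_n(2)\langle\sigma\rangle$ contains a subgroup isomorphic to the orthogonal or symplectic group $\mathrm{Sp}_{n}(2)$ or $\mathrm{O}^{\pm}_{n}(2)$ as the fixed points (up to extension) of $\sigma$, and these groups contain elements of order $2^{l+1}$ by the symplectic/orthogonal spectrum formula when the rank is $n=2^l$ — indeed in $\mathrm{Sp}_{2m}(2)$ one gets unipotent elements of order $2^{\lceil\log_2(2m+1)\rceil}$, which for $2m=n=2^l$ gives $2^{l+1}$. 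So the proof is: (1) identify a subgroup $S$ of $\Aut(L)=L\langle\sigma\rangle$ containing the centralizer $C_L(\sigma)$ extended by $\sigma$, and show $S$ (or $\langle C_L(\sigma),\sigma\rangle$) involves $\mathrm{Sp}_n(2)$ or $\mathrm{O}_n^\varepsilon(2)$; (2) invoke Lemma~\ref{l:spec2} (or its orthogonal analogue from \cite{But11}) to produce an element of order $2^{l+1}$ there; (3) conclude this order lies in $\omega(\Aut(L))$.

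The main obstacle is step (1) — pinning down exactly which classical group appears as (a section of) $C_L(\sigma)$ for the graph automorphism of $L_n(2)$ and checking its rank is $n$, because the answer depends on whether one takes $\sigma$ "symplectic-type" or "orthogonal-type" and because over $\mathbb{F}_2$ there are small subtleties with the $\mathrm{O}^\pm$ distinction and with passing between $\mathrm{GL}$ and $L=\mathrm{PSL}=\mathrm{SL}$. Once the right classical subgroup is identified, the existence of an element of order $2^{l+1}$ is immediate from the spectrum descriptions already quoted (Lemma~\ref{l:spec2}(iv): $p^k\in\omega$ when $p^{k-1}+1=2\cdot(\text{rank})$, i.e. $2^{l}+1 \le$ the relevant dimension gives $2^{l+1}$), together with Lemma~\ref{l:pow2}-style reasoning. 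An alternative that sidesteps the classical-subgroup bookkeeping: directly write down a block matrix $\tilde g\in \mathrm{GL}_n(2)\rtimes\langle\sigma\rangle$ in a basis adapted to $\sigma$ and verify by hand that $\tilde g^2$ is a regular unipotent element (single Jordan block of size $n$), using that squaring a suitable "half-shift plus graph-twist" produces the full shift; this is a short explicit $2\times2$-block computation and I would present that if the subgroup description proves awkward to cite cleanly.
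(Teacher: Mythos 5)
You have the right reduction --- since $\Out(L)=\langle\gamma\rangle$ with $\gamma$ the inverse--transpose automorphism and the $2$-part of $\omega(L)$ is only $2^l$, one must produce $h\in L$ with $(h\gamma)^2=hh^\gamma$ of order $2^l$ --- but both concrete routes you propose for realizing this fail. The route via the fixed-point subgroup cannot work in principle: $C_L(\gamma)$ is a subgroup of $L=GL_n(2)$, so every $2$-element of it is unipotent with Jordan blocks of size at most $n=2^l$ and hence has order at most $2^l$; and since $\gamma$ centralizes $C_L(\gamma)$, the group $C_L(\gamma)\langle\gamma\rangle$ is a direct product and likewise contains no element of order $2^{l+1}$. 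The spectrum formula you invoke (Lemma~\ref{l:spec2}(iv)) is stated only for odd $q$ and is false for $q=2$: already $Sp_4(2)\simeq Sym_6$ has no element of order $8$, contrary to your formula $2^{\lceil\log_2(2m+1)\rceil}$. Your fallback --- arranging $\tilde g^2$ to be a regular unipotent element, i.e.\ a single Jordan block of size $n$ --- also fails, because the elements of the form $hh^\gamma=h(h^{\mathsf T})^{-1}$ are exactly the asymmetries of nondegenerate bilinear forms, and a single Jordan block of \emph{even} size is never such an asymmetry (already in $GL_2(2)$ the equation $h=J_2h^{\mathsf T}$ forces $h$ to be singular; in general even-size blocks must occur with even multiplicity).

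The paper's proof repairs precisely this point: it takes $x$ unipotent with Jordan type consisting of one block of size $2^{l-1}+1$ and $2^{l-1}-1$ blocks of size $1$ --- all block sizes odd, yet $|x|=2^{\lceil\log_2(2^{l-1}+1)\rceil}=2^l$ --- and cites Wall's classification \cite[Theorem~2.3.1]{Wall} to write $x=hh^\gamma=(h\gamma)^2$, whence $|h\gamma|=2^{l+1}$. So the missing idea is the choice of a unipotent class of order $2^l$ that actually lies in the image of the map $h\mapsto hh^\gamma$; without that, your argument does not go through.
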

\begin{proof}
Observe that $L=GL_n(2)$ is the full general linear group and $G=L\rtimes\langle g\rangle$, where $g$ is the invert-transpose automorphism of $L$. Let $x$ be a unipotent matrix from $L$ whose Jordan normal form consists of $2^{l-1}-1$ Jordan blocks of size one and one block of size $2^{l-1}+1$. Clearly, $|x|=2^l$. It follows from \cite[Theorem~2.3.1]{Wall} that there exists $h\in L$ such that $x=hh^g=(hg)^2$. Then  $|hg|=2^{l+1}$, as required.
\end{proof}

\begin{lem}\label{l:M(i)}
Suppose that $i,k\geq2$ are integers and $n$ is a power of $2$. If $n\geq i$ and $n>18(k+1)$, then there exists a set $\Psi(i)$ consisting of $2k$ distinct integers $n/2<i_1,i_2,\ldots,i_k\leq n$ and $n/3<j_1,j_2,\ldots,j_k<n/2$ such that $i_m+j_m=n$ for every $1\leq m\leq k$ and if $a,b\in\Psi(i)$ with $a\neq b$, then $a$, $b$, and $i$ do not divide each other.
\end{lem}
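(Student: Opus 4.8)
The plan is to construct the set $\Psi(i)$ by carefully choosing $k$ values of $i_m$ in the interval $(n/2, n)$, so that each forced partner $j_m = n - i_m$ lands in $(n/3, n/2)$, and so that no divisibility relations hold among the $2k$ chosen numbers or with the fixed integer $i$. First I would observe that requiring $j_m = n - i_m \in (n/3, n/2)$ is equivalent to requiring $i_m \in (n/2, 2n/3)$, so all $2k$ numbers will lie in the narrow range $(n/3, 2n/3)$. Within that range, any nonzero proper multiple of one of the numbers already exceeds $2n/3$ (since doubling the smallest possible value $n/3$ already gives $2n/3$), so in fact \emph{no} element of $(n/3, 2n/3)$ divides another distinct element of $(n/3, 2n/3)$: the "do not divide each other" condition among elements of $\Psi(i)$ is automatic once we keep everything in this interval. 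The only genuine constraints are therefore: (a) the $2k$ numbers must be pairwise distinct; (b) none of them may divide $i$; (c) none of them may be a multiple of $i$.

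Next I would count. The interval $(n/2, 2n/3)$ contains roughly $n/6$ integers; more precisely, since $n$ is a power of $2$, it contains at least $\lfloor 2n/3 \rfloor - \lfloor n/2 \rfloor \ge n/6 - 1$ integers. Each choice of $i_m$ in this interval automatically determines $j_m = n - i_m \in (n/3, n/2)$, and distinct $i_m$ give distinct $j_m$, and an $i_m$ in $(n/2,2n/3)$ can never equal a $j_{m'}$ in $(n/3,n/2)$; so condition (a) reduces to choosing the $i_m$ distinct. Now I must discard the "bad" choices. A number $a$ is excluded if $a \mid i$ or $i \mid a$. Divisors of $i$ that lie in $(n/3, 2n/3)$: since $n \ge i$, a divisor $d$ of $i$ with $d > n/3 \ge i/3$ must be $d = i$ (if $d < i$ then $d \le i/2 \le n/2$, which is still possible, so more care is needed)—here I would argue that proper divisors of $i$ in $(n/2, 2n/3)$ force $i > 2\cdot(n/2) = n$, impossible, and proper divisors of $i$ equal to some $j_m \in (n/3,n/2)$ force $i \ge 2j_m > 2n/3$, together with $i \le n$ leaving only the band $(2n/3, n]$ for $i$, which kills at most one $j_m$ per such divisor; a cleaner route is simply to note that $i$ has at most, say, $\log_2 n$ divisors total, hence at most $\log_2 n$ of the candidate numbers are divisors of $i$. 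Multiples of $i$ in $(n/3, 2n/3)$: there is at most one, since two multiples of $i$ differ by at least $i > n/3$, the length of the interval; actually only $i$ itself can be such a multiple (as $2i > 2n/3$ forces $2i$ out of range when $i > n/3$), so at most one number is a multiple of $i$, and similarly $i$ could coincide with one of our candidates, but that is also just "$a = i$", one exclusion.

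Putting the count together: from at least $n/6 - 1$ candidate values of $i_m$ in $(n/2, 2n/3)$, we remove at most $O(\log_2 n)$ that divide $i$ and at most a constant number that are multiples of $i$ or equal to $i$; it remains to check that $n/6 - 1 - \log_2 n - c \ge k$ under the hypothesis $n > 18(k+1)$. Since $n > 18(k+1)$ gives $n/6 > 3(k+1) = 3k + 3$, and for $n$ a power of $2$ with $n > 18(k+1) \ge 36$ we have $\log_2 n$ comfortably smaller than $n/6 - 1 - 3k$ (the gap $n/6 - 3k$ is at least $3$ and grows linearly while $\log_2 n$ grows logarithmically), there are more than $k$ surviving values; choose any $k$ of them to be $i_1, \dots, i_k$ and set $j_m = n - i_m$. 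I expect the main obstacle to be making the bookkeeping of the exclusions fully rigorous with the stated constant $18$—in particular pinning down exactly how many divisors of $i$ can fall in the relevant sub-intervals and verifying that the logarithmic term really is dominated for every power of $2$ exceeding $18(k+1)$, rather than just asymptotically; this is where one must be slightly careful, for instance by using that a number in $(n/3, n)$ dividing $i \le n$ must be $i$ itself or at most $i/2$, and handling those two cases against the location of $i$ separately, as sketched above.
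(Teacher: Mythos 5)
Your overall strategy (count candidates $i_m$ in $(n/2,2n/3)$ and discard the bad ones) is viable, and two of your observations are correct and match what the hypothesis really requires: mutual divisibility inside $\Psi(i)$ is automatic once all $2k$ numbers lie in $(n/3,2n/3)$, and distinctness reduces to distinctness of the $i_m$. The genuine gap is in the exclusion count, precisely at the point you flag as delicate. You assert that at most one candidate is a multiple of $i$ ``since two multiples of $i$ differ by at least $i>n/3$'' --- but the hypothesis only gives $2\le i\le n$, and the lemma is applied in the paper for every admissible $i$, including $i=2,3,4,\dots$ (it is invoked with $i=e(p,2)$ for an arbitrary prime $p$). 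For small $i$ the requirement ``$i\nmid i_m$ and $i\nmid j_m$'', i.e.\ $i_m\not\equiv 0$ and $i_m\not\equiv n\pmod i$, excludes up to two full residue classes modulo $i$, hence up to two thirds of the interval (the case $i=3$), not one element. The count can still be pushed through, but only with a case split on $i$ (the extremal cases $i=2,3$ leave roughly $n/12$ and $n/18>k+1$ survivors respectively) that your write-up does not contain. Your fallback estimate of ``at most $\log_2 n$ divisors of $i$'' is also unsound: the divisor function of an integer $\le n$ is not bounded by $\log_2 n$, and even granting it, the inequality $n/6-1-\log_2 n-c\ge k$ fails at the boundary ($k=2$ forces $n\ge 64$, where $n/6-1-\log_2 64\approx 3.67$, leaving no room once multiples of $i$ are accounted for). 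Your first, sharper remark --- that a divisor of $i$ lying in $(n/3,2n/3)$ must be $i$ itself or $i/2$, so at most two pairs die this way --- is the correct ingredient and should replace the $\log_2 n$ bound.

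For comparison, the paper sidesteps the density analysis entirely by a greedy construction: it sets $i_m=n/2+\sum_{l\le m}t_l$ and $j_m=n-i_m$ with increments $t_l\in\{1,2,3\}$, showing at each step that some increment yields a pair with neither coordinate divisible by $i$ (otherwise $i$ would divide two integers differing by at most $2$, forcing $i=2$, which is excluded by parity of $n/2\pm1$); it builds $k+1$ pairs, notes that no element can divide another because a quotient of $2$ would force $n/2\le 9(k+1)$, and finally discards the at most one pair in which $j_m$ divides $i$ (possible only when $i=2j_m$). This handles small $i$ uniformly and is why the constant $18$ suffices without the casework your counting route would need.
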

\begin{proof}
We will find a set $\Psi(i)$ of required numbers among $2(k+1)$ numbers of the form
$$
i_m=n/2+\sum_{l=1}^mt_l\mbox{ and } j_m=n/2-\sum_{l=1}^mt_l,\mbox{ where }t_l\in\{1,2,3\}\mbox{ and }m=1,\ldots,k+1.
$$

Clearly, $i_m+j_m=n$ for all $m=1,\ldots,k+1$. Since $n>18(k+1)$, it follows that $i_m\leq n/2+3m\leq n/2+3(k+1)<n$ and $j_m\geq n/2-3m\geq n/2-3(k+1)>n/3$ for every $m=1,\ldots,k+1.$

Let us now check the desired divisibility conditions. If one of the elements of $\Psi(i)$ divides the other, say, $a$ divides $b$, then it is clear that $a=n/2-\sum_{l=1}^xt_l$, $b=n/2+\sum_{l=1}^yt_l$ for some $x,y\in\{1,\ldots,k+1\}$, and $b=2a$. Since $\sum_{l=1}^{k+1}t_l\leq3(k+1)$, we arrive to a contradiction, because $a=2b$ yields the impossible inequality $n/2\leq9(k+1)$.

There is $t_1\in\{1,2,3\}$ such that $i_1=n/2+t_1$ and $j_1=n/2-t_1$ are both not divisible by~$i$. Indeed, if not, there exists $\varepsilon\in\{+,-\}$ such that $i$ divides $n/2+\varepsilon1$ and $n/2+\varepsilon3$ simultaneously. The latter is possible only if $i=2$, but $n/2+1$ and $n/2-1$ are odd. Assume by induction on $m$ that we have already proved the same for all $i_s$ and $j_s$ with $s<m$. Then again there is $t_m\in\{1,2,3\}$ such that $i_m=i_{m-1}+t_m$ and $j_m=j_{m-1}-t_m$ are both not divisible by~$i$. Otherwise, there exists $\varepsilon\in\{+,-\}$ such that $i$ divides $i_{m-1}+\varepsilon1$ and $i_{m-1}+\varepsilon3$ simultaneously. Again this implies that $i=2$, but either $i_{m-1}\pm1$ or $i_{m-1}\pm2$ are both odd.

Now we claim that there exists at most one index $m\in\{1,\ldots, k+1\}$ such that $i_m$ or $j_m$ divides $i$. By construction, we know that $i_m>n/2>j_m>n/3$, so the only possibility is $i=2j_m$. Excluding, if necessary, the pair $i_m$ and $j_m$ with this property, we obtain $2k$ required numbers.
\end{proof}

\begin{lem}\label{l:P(i)}
Suppose that numbers $i$, $k$, $n$, and the set $\Psi(i)$ are as in Lemma~{\em\ref{l:M(i)}}.
If $\Pi(i)=\prod\limits_{j\in\Psi(i)}r_j(2)$, then the following hold:
\begin{enumerate}[{\em(i)}]
\item $\Pi(i)\in\omega(L_n(2)^k);$
\item if $\Pi(i)\in\omega(L_{n_1}(2)\times L_{n_2}(2)\times\ldots\times L_{n_k}(2))$, where $4\leq n_m\leq n$ for $m\in\{1,\ldots,k\}$, then $n_m=n$ for every~$m;$
\item $2\cdot \Pi(i)\not\in\omega(L_n(2)^k)$ and if $i\neq 6$, then $r_i(2)\cdot \Pi(i)\not\in\omega(L_n(2)^k).$
\end{enumerate}
\end{lem}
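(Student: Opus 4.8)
The plan is to combine the arithmetic description of the spectra of linear groups over $\mathbb{F}_2$ with the divisibility structure built into $\Psi(i)$. Recall from Lemma~\ref{l:M(i)} that the $2k$ exponents forming $\Psi(i)$ all lie in $(n/3,n)$, that no two of them divide one another, and that $i$ neither divides nor is divisible by any of them; moreover $n>18(k+1)\geq54$, so $n\geq64$, and hence every exponent of $\Psi(i)$, as well as $i$ itself (using $i\geq2$, $i\neq6$), avoids the base-$2$ exceptions of the Zsygmondy theorem. Thus all the primes $r_a(2)$ with $a\in\Psi(i)\cup\{i\}$ exist and are pairwise distinct odd primes. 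The arithmetic input is Lemma~\ref{l:spec} with $q=2$: for $g$ in a linear group $L_m(2)$ with $2\leq m\leq n$, the odd part of $|g|$ divides $[2^{d_1}-1,\dots,2^{d_s}-1]$ for suitable positive integers $d_u$ with $\sum_u d_u\leq m$, and with $\sum_u d_u\leq m-2$ if $|g|$ is even (the summand $2^{k-1}+1\geq2$ in items (iv)--(v)). Since $r_a(2)\mid 2^d-1$ precisely when $a\mid d$, to realize the prime $r_a(2)$ inside $|g|$ some part $d_u$ must be a multiple of $a$, whence $d_u\geq a$. The decisive local observation, where the antichain property is used, is that a part $d_u\leq n$ can be divisible by at most one element of $\Psi(i)$: an $a\in\Psi(i)$ with $a\mid d_u$ satisfies $a>n/3\geq d_u/3$, so $d_u\in\{a,2a\}$, and two distinct such elements would be $a$ and $2a$, contradicting non-divisibility. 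Consequently, distinct elements of $\Psi(i)$ realized inside one $g\in L_m(2)$ occupy pairwise distinct parts, each exceeding $n/3$; since three numbers greater than $n/3$ sum to more than $n\geq m\geq\sum_u d_u$, at most two elements of $\Psi(i)$ can be realized inside a single $g$, and if exactly two, say $a$ and $b$, then $m\geq a+b$, and $m-2\geq a+b$ when $|g|$ is even.

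For (i), apply Lemma~\ref{l:spec}(ii) to $L_n(2)$ with $n_1=i_m$ and $n_2=j_m=n-i_m$ to get $r_{i_m}(2)r_{j_m}(2)\in\omega(L_n(2))$ for each $m$; picking $g_m\in L_n(2)$ of that order and using that the $2k$ primes involved are distinct, the element $(g_1,\dots,g_k)$ of $L_n(2)^k$ has order $\prod_{a\in\Psi(i)}r_a(2)=\Pi(i)$.

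For (ii), suppose $\Pi(i)$ divides the order of $(h_1,\dots,h_k)$ with $h_t\in L_{n_t}(2)$. For each $a\in\Psi(i)$ the prime $r_a(2)$ divides $|h_t|$ for some $t$; assigning each $a$ to one such $t$ partitions $\Psi(i)=\bigsqcup_tS_t$, and $\prod_{a\in S_t}r_a(2)$ divides $|h_t|$. By the local observation $|S_t|\leq2$, and since $\sum_t|S_t|=2k$ we get $|S_t|=2$ for every $t$, say $S_t=\{a_t,b_t\}$, so $n_t\geq a_t+b_t$. Summing, $\sum_tn_t\geq\sum_{a\in\Psi(i)}a=\sum_m(i_m+j_m)=kn$, while $n_t\leq n$ for all $t$; hence $n_t=n$ for every $t$.

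For (iii), first suppose $2\cdot\Pi(i)\in\omega(L_n(2)^k)$, realized by $(h_1,\dots,h_k)$ with $h_{t_0}$ of even order. As above $S_t=\{a_t,b_t\}$ for each $t$; for $t\neq t_0$ we have $a_t+b_t\leq n$, while the even-order case gives $a_{t_0}+b_{t_0}\leq n-2$. Summing yields $kn=\sum_{a\in\Psi(i)}a\leq(k-1)n+(n-2)<kn$, a contradiction. Now suppose $i\neq6$ and $r_i(2)\cdot\Pi(i)\in\omega(L_n(2)^k)$. Each $h_t$ still realizes at most two elements of $\Psi(i)$, so all $2k$ of them are realized with exactly two per factor; the prime $r_i(2)$ is realized inside some $h_{t_0}$, which thus realizes $\{a_{t_0},b_{t_0}\}\subseteq\Psi(i)$ together with $i$. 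The elements $a_{t_0}$ and $b_{t_0}$ occupy distinct parts $d_u,d_v$ of the decomposition bounding the odd part of $|h_{t_0}|$. If $i$ occupied the part containing $a_{t_0}$, then $d_u\in\{a_{t_0},2a_{t_0}\}$ and $i\mid d_u$ with $i\nmid a_{t_0}$ would force $d_u=2a_{t_0}$, so $d_u+d_v\geq2a_{t_0}+b_{t_0}>n$, which is impossible; likewise for $b_{t_0}$. Hence $i$ occupies a third part, of size at least $i$, and therefore $a_{t_0}+b_{t_0}\leq n-i$. Summing over all factors, $kn=\sum_{a\in\Psi(i)}a\leq(k-1)n+(n-i)<kn$, a contradiction. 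I expect this final case to be the main obstacle: because $i$ may be much smaller than $n$, one has to argue precisely that the factor hosting $r_i(2)$ is nevertheless forced to waste at least $i$ additional dimensions, and it is exactly the non-divisibility of $i$ with the members of $\Psi(i)$, together with the hypothesis $i\neq6$ guaranteeing that $r_i(2)$ is defined, that secures this step.
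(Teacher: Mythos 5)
Your proof is correct, and it reaches the conclusion by a somewhat different route than the paper. The paper's argument for item (ii) leans on the prime graph: it invokes the adjacency criterion of Lemma~\ref{l:adj} to show that the primes $r_{i_1}(2),\ldots,r_{i_k}(2)$ (with all $i_m>n/2$ and pairwise non-dividing) must land in pairwise distinct direct factors, then pairs each $r_{j_m}(2)$ with its partner by an adjacency/induction argument, and for item (iii) asserts that $|g_m|=r_{i_m}(2)r_{j_m}(2)$ exactly, leaving the verification as ``easy to deduce from Lemma~\ref{l:spec}.'' You instead work entirely at the level of the exponent partitions $d_1,\ldots,d_s$ underlying Lemma~\ref{l:spec}: the observation that a part $d_u\le n$ is divisible by at most one member of $\Psi(i)$ (via $a>n/3$ and the antichain property), hence at most two members of $\Psi(i)$ fit in one factor, turns (ii) into a clean global count $\sum_t n_t\ge\sum_{a\in\Psi(i)}a=kn$ against $n_t\le n$, with no appeal to Lemma~\ref{l:adj} and no induction. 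The same bookkeeping gives (iii) by exhibiting at least $2$ (respectively, at least $i$) ``wasted'' dimensions in the factor carrying the extra prime; in particular your analysis showing that the part hosting $r_i(2)$ must be a third part disjoint from those of $a_{t_0}$ and $b_{t_0}$ makes explicit precisely the step the paper compresses. Both arguments ultimately rest on the same facts (Lemma~\ref{l:spec} plus the size and divisibility constraints built into $\Psi(i)$ by Lemma~\ref{l:M(i)}), but yours is more self-contained and quantitatively sharper, while the paper's is shorter because it outsources the distribution of the large primes to the prime-graph machinery.
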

\begin{proof}
Recall that $\Psi(i)=\{i_1,\ldots, i_k, j_1,\ldots, j_k\}$, where $i_1<i_2<\ldots<i_k$ and $n=i_1+j_1=\ldots=i_k+j_k$. It follows from Lemma~\ref{l:spec}(ii) that $r_{i_m}(2)\cdot r_{j_m}(2)\in\omega(L_n(2))$ for $m=1,\ldots,k$. Hence $\Pi(i)\in\omega(L_n(2)^k)$, which proves~(i).

To prove (ii), put $L_m=L_{n_m}(2)$ for $m=1,\ldots,k$, and $P=L_1\times\ldots\times L_k$. Let $g=g_1\cdot\ldots\cdot g_k$ be an element of order $\Pi(i)$ in~$P$ and $g_m\in L_m$ for $m=1,\ldots,k$. Since $i_1,\ldots, i_k$ are greater than $n/2$ and do not divide each other, Lemma~\ref{l:adj} implies that the primes $r_{i_1}(2),\ldots, r_{i_k}(2)$ divide the orders of pairwise distinct elements among $g_1,\ldots, g_k$. Hence up to reordering, we may  assume that $r_{i_m}(2)$ divides $|g_m|$ for $m=1,\ldots,k$.

If $r_{j_1}(2)$ divides $|g_m|$ for $m>1$, then $r_{j_1}(2)r_{i_m}(2)\in\omega(L_m)$, which is impossible, because $i_m+j_1>n$ and $i_m$ and $j_1$ do not divide each other. Therefore, $r_{j_1}(2)r_{i_1}(2)$ divides $|g_1|$. In particular, we have $n_1=n$, because $i_1+j_1=n$. Furthermore, as easy to deduce from Lemma~\ref{l:spec},  $|g_1|=r_{j_1}(2)r_{i_1}(2)$. It follows now by induction on $k$ that $|g_m|=r_{j_m}(2)r_{i_m}(2)$ for each $m=2,\ldots,k$. Thus, $L_m=L_n(2)$ for all $m$, as required.

Proving (iii), we apply very similar arguments. Let $P=L_1\times\ldots\times L_k$, where $L_m=L_n(2)$ for all $m$, and let $g=g_1\cdot\ldots\cdot g_k$ be an element of order $r\cdot\Pi(i)$ in $P$, where $g_m\in L_m$ for $m=1,\ldots,k$ and either $r=2$ or $r=r_i(2)$. Arguing as in the previous paragraph, we obtain that $|g_m|=r_{j_m}(2)r_{i_m}(2)$ for each $m=1,\ldots,k$, so the order of $g$ must be exactly~$\Pi(i)$, which proves (iii).
\end{proof}

We complete the section with two simple number-theoretic observations.

\begin{lem}\label{l:composite} If  $n\geq59$ is an integer, then there are at least $[2n/3]+3$ composite numbers among $n,n+1,\ldots,2n.$
\end{lem}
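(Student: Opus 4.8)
The plan is to count the non-composite numbers — i.e. $1$ together with the primes — in the interval $[n,2n]$ and show there are few of them, so that the remaining numbers (which are composite) are many. The interval $n, n+1,\dots, 2n$ contains exactly $n+1$ integers. Among these, the non-composite ones are the primes in $[n,2n]$ (note $1\notin[n,2n]$ since $n\geq 59$), so I would bound $\pi(2n)-\pi(n-1)$ from above, where as in the proof of Lemma~\ref{l:Zhang} I write $\tau(x)$ for the prime-counting function. It then suffices to show
$$
\tau(2n)-\tau(n-1)\leq (n+1)-\Big(\Big[\tfrac{2n}{3}\Big]+3\Big)=n-\Big[\tfrac{2n}{3}\Big]-2,
$$
and since $n-[2n/3]\geq n-2n/3=n/3$, it is enough to prove $\tau(2n)-\tau(n-1)\leq n/3-2$, or a fortiori $\tau(2n)\leq n/3-2$.

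First I would dispose of the ``main term'' by an explicit Chebyshev-type estimate: by \cite[Corollary~5.2]{Dusart} (already invoked in the excerpt), $\tau(x)\leq \frac{1.2551\,x}{\ln x}$ for $x>1$, hence $\tau(2n)\leq \frac{1.2551\cdot 2n}{\ln(2n)}=\frac{2.5102\,n}{\ln(2n)}$. For this to be at most $n/3-2$ it suffices that $\frac{2.5102}{\ln(2n)}\leq \frac{1}{3}-\frac{2}{n}$, i.e. $\ln(2n)\geq \frac{2.5102}{\frac13-\frac2n}$. For $n\geq 118$ one has $\frac13-\frac2n\geq \frac13-\frac{1}{59}>0.3164$, so it is enough that $\ln(2n)\geq 7.94$, i.e. $2n\geq e^{7.94}\approx 2810$, i.e. $n\geq 1405$; in fact cleaning up the constants (using $\frac13-\frac2n\to\frac13$) gives $\ln(2n)>7.54$, i.e. $n\gtrsim 940$, so the Dusart bound settles all $n$ above an explicit threshold $N_0$ of order a few thousand.

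It remains to check the finitely many $n$ with $59\leq n\leq N_0$, which is a routine but genuinely finite computation: for each such $n$ one verifies directly that $\tau(2n)-\tau(n-1)\leq n-[2n/3]-2$. In practice I would not tabulate all of these by hand but instead shrink the range with a sharper known bound — e.g. $\tau(x)<\frac{x}{\ln x}\bigl(1+\frac{1.2762}{\ln x}\bigr)$ for $x\geq 2$, or the classical $\tau(2n)-\tau(n)< \frac{3\ln 2}{\ln n}\cdot n$ type estimates — which push $N_0$ down to a small number (a couple hundred), leaving a short explicit check near $n=59$. The main obstacle is purely bookkeeping: pinning down the constant $N_0$ so that the asymptotic estimate is clean, and then honestly verifying the small cases; there is no conceptual difficulty, since the inequality $\tau(2n)\approx 2n/\ln(2n)$ versus $n/3$ is true with a large margin once $n$ is even moderately large. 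I would present the Dusart computation for $n\geq N_0$ in two or three lines and state that the remaining cases $59\leq n< N_0$ are checked directly.
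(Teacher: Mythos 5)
Your proposal is correct in outline but takes a genuinely different, and heavier, route than the paper. You count the primes in $[n,2n]$ and bound them analytically via Dusart's estimate $\tau(x)\leq 1.2551\,x/\ln x$; the reduction to $\tau(2n)\leq n/3-2$ is sound, and the statement is indeed true for all $n\geq 59$, so the finite check you defer would succeed. The cost is that the analytic bound only becomes effective around $n\approx 1400$ (or a couple hundred after sharpening), leaving well over a thousand cases to be ``checked directly''; as written, that verification is asserted rather than performed, and it is the only part of your argument where the inequality could conceivably be tight, so a referee would insist on seeing it discharged (in practice by computer). The paper avoids all of this with a purely elementary sieve: in the interval $I=\{n,\dots,2n\}$ at least $[n/2]+[n/3]-[n/6]-1$ numbers are divisible by $2$ or $3$ (hence composite, as they exceed $6$), and since $I$ contains $60$ consecutive integers there are four further composites coprime to $6$, namely elements congruent to $5,25,35,55\pmod{60}$, each divisible by $5$; summing and using integrality gives $[2n/3]+3$ with no prime-counting function and no case analysis. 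Your approach is more general (it would prove much stronger density statements for large $n$), but for the specific bound needed here the paper's direct count is both shorter and fully self-contained.
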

\begin{proof}
Denote $I=\{n,n+1,\ldots, 2n\}$.
If $x$ and $y$ are positive integers, then $y$ divides exactly $[x/y]+1$ numbers among $0,\ldots,x$. Therefore, at least $[n/2]$ numbers in $I$ are divisible by $2$, at least $[n/3]$ are divisible by $3$, and at most $[n/6]+1$ are divisible by~$6$.
Hence $2$ or $3$ divide at least $[n/2]+[n/3]-[n/6]-1$ numbers in $I$. Note that $I$ contains 60 consecutive integers, so there exist $a,b,c,d\in I$ such that
$a\equiv5\pmod{60}$, $b\equiv 25\pmod{60}$, $c\equiv 35\pmod{60}$, and $d\equiv 55\pmod{60}$. These four numbers are composite and coprime to~$6$. Suppose that $n\equiv r\pmod 3$, where $0\leq r\leq 2$.
Then we have at least $n/2-1/2+n/3-r/3-n/6-1+4=2n/3-r/3+5/2\geq [2n/3]-1/3+5/2>[2n/3]+2$ composite numbers in~$I$. Since this number is an integer, the result follows.
\end{proof}

\begin{lem}\label{l:eta-estim} Suppose that $x\geq 0$ and $\Theta=\{ i\in\mathbb{Z},i\geq1~|~ \eta(i)\leq x \}$. Then $\frac{3x}{2}-\frac{3}{2}\leq |\Theta|\leq\frac{3x}{2}+\frac{1}{2}$.
\end{lem}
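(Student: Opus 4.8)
The plan is to describe the set $\Theta$ explicitly and then simply count its elements. Unwinding the definition of $\eta$, an integer $i\geq1$ lies in $\Theta$ exactly when either $i$ is odd and $i\leq x$, or $i$ is even and $i\leq 2x$. Hence $\Theta$ is the disjoint union of the set $O$ of odd positive integers not exceeding $x$ and the set $E$ of even positive integers not exceeding $2x$, and it suffices to compute $|O|$ and $|E|$.

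The next step is the counting itself. The odd positive integers are $1,3,5,\dots$, and exactly $[(x+1)/2]$ of them lie in the interval $[1,x]$ (this equals the number of odd integers in $\{1,\dots,[x]\}$, and one checks it by splitting into the cases $[x]$ even and $[x]$ odd); similarly the even positive integers are $2,4,6,\dots$, and exactly $[x]=[2x/2]$ of them lie in $[1,2x]$. Therefore $|\Theta|=|O|+|E|=[(x+1)/2]+[x]$.

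Finally I would estimate this quantity using the standard inequalities $y-1<[y]\leq y$, valid for every real $y$. The upper bound is immediate: $|\Theta|\leq\frac{x+1}{2}+x=\frac{3x}{2}+\frac12$. For the lower bound, $|\Theta|>\bigl(\frac{x+1}{2}-1\bigr)+(x-1)=\frac{3x}{2}-\frac32$, so in particular $|\Theta|\geq\frac{3x}{2}-\frac32$, as claimed.

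There is no genuine obstacle in this argument; it is pure bookkeeping with the floor function. The only point that deserves a line of care is the verification that the number of odd integers in $[1,x]$ is indeed $[(x+1)/2]$ and the number of even integers in $[1,2x]$ is $[x]$ for arbitrary real $x\geq0$, which is handled by writing $x=[x]+\{x\}$ and considering the cases $\{x\}<\tfrac12$ and $\{x\}\geq\tfrac12$ together with the parity of $[x]$; the boundary case $x=0$ (where $\Theta=\varnothing$) is consistent with the formula as well.
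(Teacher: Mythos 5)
Your proof is correct and takes essentially the same route as the paper: both arguments identify $\Theta$ as the odd integers in $[1,x]$ together with the even integers in $[1,2x]$ and then count. The only difference is presentational — you package the count as the closed formula $[(x+1)/2]+[x]$ and apply $y-1<[y]\leq y$, whereas the paper splits into cases according to the parity of $[x]$ and computes $|\Theta|$ as $3k$ or $3k+2$; the two computations agree.
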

\begin{proof}
There are two cases depending on the parity of $[x]$.
Suppose that $[x]=2k$. Then $\Theta=\{1,2,\ldots,2k, 2k+2,\ldots, 4k\}$, so
$|\Theta|=3k$. Since $x-1<2k\leq x$, it follows that $(3x)/2-3/2<3k\leq (3x)/2<(3x)/2+1/2$.

If $[x]=2k+1$, then $|\Theta|=3k+2$. We have $x-1<2k+1\leq x$, so $(3x)/2-3/2<3k+3/2\leq (3x)/2$.
Then $|\Theta|>3k+3/2>(3x)/2-3/2$ and, on the other hand, $|\Theta|-1/2=3k+3/2\leq(3x)/2$, as required.
\end{proof}

\section{Preliminaries: group theory}

The first well-known lemma helps to deal with groups having trivial solvable radical.

\begin{lem}{\em\cite[3.3.20]{Robinson}}\label{l:rob}
Let $R\simeq R_1\times\cdots\times R_k$,
where $R_i$ is the $n_i$-th direct power of a simple group
$S_i$, and $S_i\not\simeq S_j$ for $i\neq j$.
Then $\Aut R\simeq\Aut{R}_1\times\cdots\times\Aut{R}_k$
and $\Aut{R}_i\simeq(\Aut S_i)\wr Sym_{n_i}$,
where in this wreath product $\Aut S_i$
appears in its right regular representation and the symmetric group $Sym_{n_i}$
in its natural permutation representation.
Moreover, these isomorphisms induce isomorphisms $\Out{R}\simeq\Out{R}_1\times\cdots\times\Out{R}_k$
and $\Out{R}_i\simeq(\Out{S}_i)\wr Sym_{n_i}$.
\end{lem}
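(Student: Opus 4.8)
The plan is to argue in three steps: reduce $\Aut R$ to a product over the factors $R_i$, compute $\Aut(S^{n})$ for a single nonabelian simple group $S$, and then read off the statements about $\Out$ by factoring out inner automorphisms. (Here, as is implicit in the statement, the $S_i$ are taken to be \emph{nonabelian} simple; this hypothesis is genuinely needed, as noted at the end.)

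First I would recall the standard structural fact that in a direct product $T_1\times\cdots\times T_m$ of nonabelian simple groups every normal subgroup equals the product of a sub-collection of the factors $T_j$; in particular the minimal normal subgroups of such a product are exactly the $T_j$. (Non-commutativity is what makes this work: if a normal subgroup $N$ contains $x=(x_1,\ldots,x_m)$ with $x_1\neq1$, pick $t\in T_1$ with $[x_1,t]\neq1$ and observe that $1\neq[x,(t,1,\ldots,1)]=([x_1,t],1,\ldots,1)\in N\cap T_1$, so $T_1\leq N$ by simplicity, and then induce on $m$.) Applying this to $R=\prod_i S_i^{n_i}$, its minimal normal subgroups are the $\sum_i n_i$ simple direct factors, and every automorphism of $R$ permutes them, preserving isomorphism type. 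Since $S_i\not\simeq S_j$ for $i\neq j$, the subgroup $R_i$ — the product of all minimal normal subgroups of $R$ isomorphic to $S_i$ — is characteristic in $R$. Restriction then gives a homomorphism $\Aut R\to\prod_i\Aut R_i$; it is injective because $R=\langle R_1,\ldots,R_k\rangle$, and surjective because any tuple $(\alpha_1,\ldots,\alpha_k)$ with $\alpha_i\in\Aut R_i$ assembles into an automorphism of the internal direct product $R=R_1\times\cdots\times R_k$. Hence $\Aut R\simeq\Aut R_1\times\cdots\times\Aut R_k$.

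Next I would fix $i$, write $S=S_i$, $n=n_i$, and let $S^{(1)},\ldots,S^{(n)}$ be the $n$ direct factors of $R_i$; by the previous step these are precisely the minimal normal subgroups of $R_i$, so each $\alpha\in\Aut R_i$ determines a permutation $\bar\alpha\in Sym_n$ via $\alpha(S^{(j)})=S^{(\bar\alpha(j))}$. The resulting map $\Aut R_i\to Sym_n$ is surjective and split, since the coordinate permutations of $S^{(1)}\times\cdots\times S^{(n)}$ are automorphisms and furnish a section; its kernel consists of the automorphisms stabilizing every $S^{(j)}$, and restriction to the factors identifies this kernel with $\prod_{j=1}^{n}\Aut(S^{(j)})\simeq\Aut(S)^{n}$. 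Thus $\Aut R_i$ is the split extension $\Aut(S)^{n}\rtimes Sym_n$ with $Sym_n$ permuting the $n$ coordinates, i.e. $\Aut R_i\simeq\Aut(S)\wr Sym_n$ with $Sym_n$ in its natural permutation action; realizing this wreath product as a permutation group, each coordinate copy of $\Aut S$ acts by right multiplication on itself, matching the description in the statement.

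Finally, since $Z(R)=1$ and $Z(R_i)=1$ we have $\operatorname{Inn}R=R$ and $\operatorname{Inn}R_i=R_i$, and under the isomorphism of the first step $\operatorname{Inn}R$ corresponds to $\prod_i\operatorname{Inn}R_i$ (conjugation by $(r_1,\ldots,r_k)\in\prod_iR_i$ restricts to conjugation by $r_i$ on $R_i$); passing to quotients gives $\Out R\simeq\prod_i\Out R_i$. For one factor, the inner automorphisms of $S^{n}$ are exactly the componentwise ones, so $\operatorname{Inn}(S^{n})\simeq\operatorname{Inn}(S)^{n}$ lies inside the base group $\Aut(S)^{n}$ of the wreath decomposition and is normalized by the top group $Sym_n$; dividing it out yields $\Out R_i\simeq(\Aut(S)/\operatorname{Inn}(S))\wr Sym_n=\Out(S)\wr Sym_n$. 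The one genuinely nontrivial ingredient is the structural fact invoked at the very start — that normal subgroups of a direct product of nonabelian simple groups are subproducts of the factors — everything afterwards being formal manipulation of split extensions; it is also this fact that forces the nonabelian hypothesis, since for abelian $S$ it fails dramatically, e.g. $\Aut(\mathbb{Z}_p^{n})=GL_n(p)$, which bears no resemblance to $\Aut(\mathbb{Z}_p)\wr Sym_n=\mathbb{Z}_{p-1}\wr Sym_n$.
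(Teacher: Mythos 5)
The paper offers no proof of this lemma---it is quoted directly from Robinson [3.3.20]---so there is no internal argument to compare against; your proof is the standard one and is correct, including the correct identification of the single nontrivial ingredient (every normal subgroup of a direct product of nonabelian simple groups is a subproduct of the factors), from which the characteristicity of the $R_i$, the splitting $\Aut(S^n)\simeq\Aut(S)^n\rtimes Sym_n$, and the passage to outer automorphism groups all follow formally. Your closing caveat is also well taken: the simple factors must be nonabelian for the statement to hold (as in Robinson's formulation, and as is automatic in the paper's application, where the lemma is applied to $K=\Soc(\overline{G})$ with $C_{\overline{G}}(K)=1$).
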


The next two lemmas provide an existence of appropriate large solvable subgroups.

\begin{lem}\label{l:r-primes} Let $G$ be a group
and $(A_1,\ldots, A_m)$ a tuple of all chief factors of $G$.
Suppose that there exist $k$ distinct integers $i_1,i_2,\ldots,i_k$ and distinct primes $p_1,p_2,\ldots, p_k$ such that $1\leq i_j\leq m$ and $p_j$ divides $|A_{i_j}|$ for every $j\in\{1,\ldots,k\}$. Then $G$ includes a solvable subgroup $K$ such that $\pi(K)=\{p_1,\ldots,p_k\}$.
\end{lem}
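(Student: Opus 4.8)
The statement to prove is Lemma~\ref{l:r-primes}: given a group $G$ with chief factors $A_1,\dots,A_m$ and $k$ distinct primes $p_1,\dots,p_k$ each dividing the order of a \emph{distinct} chief factor $A_{i_j}$, one finds a solvable subgroup $K\leq G$ with $\pi(K)=\{p_1,\dots,p_k\}$. The natural approach is induction on $k$ (or, equivalently, on the length of a chief series through the relevant factors), building $K$ one prime at a time by repeatedly using the Schur--Zassenhaus theorem and Hall-type arguments in the quotient groups obtained by factoring out everything below the currently relevant chief factor.

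\smallskip\noindent\textbf{Key steps.} First I would reorder the chief series so that $i_1<i_2<\dots<i_k$, and set $N$ to be the term of the chief series just below $A_{i_1}$, so that $A_{i_1}=M/N$ where $M\trianglelefteq G$ and $M/N$ is a chief factor whose order is divisible by $p_1$. Since $A_{i_1}$ is a chief factor it is a direct product of copies of a simple group $T_1$, and $p_1\mid|T_1|$. I would then pass to $\bar G=G/N$: its chief factors are exactly $A_{i_1},A_{i_1+1}',\dots$ — more precisely the images of the original chief series above $N$ — and crucially $A_{i_2},\dots,A_{i_k}$ still appear (as chief factors of $\bar G$) since they lie above $N$. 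Inside $\bar G$, by induction applied to the image of the series above $M/N$ in $\bar G/(M/N)=G/M$, there is a solvable subgroup $\bar K_0\leq G/M$ with $\pi(\bar K_0)=\{p_2,\dots,p_k\}$; pull it back to a subgroup $H\leq G$ containing $M$ with $H/M=\bar K_0$. Now I need to locate inside $H$ (or a suitable subgroup) a solvable group meeting $p_1$ as well. The point is that $M/N$ is a (possibly elementary-abelian, possibly nonabelian) chief factor with $p_1\mid|M/N|$: in either case $M/N$ contains a solvable $p_1$-subgroup $S/N$ that is \emph{normalized by a Hall-type complement}, and more importantly, by a Frattini/Schur--Zassenhaus argument there is a solvable subgroup of $H$ mapping onto $\bar K_0$ and meeting $M/N$ in a nontrivial $p_1$-group. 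Concretely: take a Sylow $p_1$-subgroup $S/N$ of $M/N$; by the Frattini argument $H=M\cdot N_H(S/N)$, so $N_H(S/N)$ maps onto $H/M=\bar K_0$; inside $N_H(S/N)$, the preimage of a solvable subgroup of $\bar K_0$ with the right prime set, intersected appropriately with $S$ and with $N$ factored out, yields the desired $K$ after noting $N$ can be avoided because we only need \emph{some} solvable subgroup with the prescribed prime \emph{set} (not containing $N$). I would clean this up by choosing, at the base of the induction, a solvable $p_1$-subgroup of $M/N$ directly (e.g. a subgroup of order $p_1$ inside a minimal normal subgroup of $G/N$), and at each inductive step extending by conjugation-stable solvable pieces.

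\smallskip\noindent\textbf{Main obstacle.} The delicate point is handling \emph{nonabelian} chief factors: when $A_{i_j}=T^{d}$ with $T$ nonabelian simple, a ``$p_j$-part'' of $A_{i_j}$ is not normal in the relevant quotient, so one cannot just take a normal $p_j$-subgroup and quotient. The fix is to use the Frattini argument at each step — replace $G$ (or the current quotient) by the normalizer of a Sylow $p_j$-subgroup of the chief factor, which still surjects onto the quotient above that factor — and then to carry a \emph{solvable} subgroup upward through the series rather than a normal one. One must check that these normalizer-and-pullback operations compose correctly over all $k$ steps and that the final subgroup $K$ has prime set \emph{exactly} $\{p_1,\dots,p_k\}$ and no more: this is where choosing at each step a $p_j$-subgroup (not merely a subgroup whose order is divisible by $p_j$) of the chief factor is essential, so that $|K|$ divides $p_1^{a_1}\cdots p_k^{a_k}$. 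The inclusion $\pi(K)\supseteq\{p_1,\dots,p_k\}$ is guaranteed because at step $j$ we genuinely pick a nontrivial $p_j$-element, and solvability of $K$ is automatic since it is built as an iterated extension of $p$-groups. I expect the write-up to be short once the induction is set up with the Frattini argument as the engine; the only real care is bookkeeping the chief series of the successive quotients so that $A_{i_{j+1}},\dots,A_{i_k}$ survive as chief factors after each reduction.
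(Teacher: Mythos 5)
Your toolkit is the right one --- the Frattini argument, the Schur--Zassenhaus theorem, and an induction, with the nonabelian chief factors correctly identified as the place where a Sylow subgroup must be normalized rather than extracted as a normal piece --- but the way you set up the induction leaves a genuine gap that the phrase ``$N$ can be avoided'' does not close. You induct on $k$, go straight to the chief factor $A_{i_1}=M/N$, and try to glue a $p_1$-piece of $M/N$ onto a solvable $\{p_2,\dots,p_k\}$-subgroup of $G/M$. Every object you produce this way (the preimage $S$ of a Sylow $p_1$-subgroup of $M/N$, the normalizer $N_H(S/N)$, the preimage $H$ of $\bar K_0$) contains $N$, and $\pi(N)$ is completely uncontrolled: it need not be contained in $\{p_1,\dots,p_k\}$ and need not be coprime to it, so Schur--Zassenhaus cannot be used to jump over $N$, and ``with $N$ factored out'' does not produce a subgroup of $G$. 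Replacing $S$ by one of its Sylow $p_1$-subgroups fixes the bottom layer but only pushes the problem up one level: the piece that is supposed to surject onto $\bar K_0$ still has to pass through $N$ (and through $N_{M/N}(S/N)$, whose prime content is likewise uncontrolled when $M/N$ is nonabelian). Nothing in your outline cleans these intermediate primes out of the final subgroup.

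The paper's proof avoids this by inducting on $|G|$ and peeling the group from the \emph{bottom}: take a minimal normal subgroup $M$ of $G$. If $\pi(M)\cap\{p_1,\dots,p_k\}=\varnothing$, apply induction to $G/M$ and then Schur--Zassenhaus to the preimage of the resulting $\pi$-subgroup (the coprimality needed for Schur--Zassenhaus is now automatic). If some $p_j$ divides $|M|$, take a Sylow $p_j$-subgroup $R$ of $M$; by Frattini $G=N_G(R)M$, and if $N_G(R)<G$ one checks that the hypothesis passes to $N_G(R)$ and concludes by induction on the order, while if $R\trianglelefteq G$ then minimality of $M$ forces $M=R$ to be a $p_j$-group, which can be safely absorbed into the preimage of the subgroup obtained by induction in $G/R$. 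At every stage the normal subgroup being crossed is a single chief factor that is either coprime to $\pi$ or a $p_j$-group --- exactly the dichotomy your top-down construction lacks. To salvage your plan, switch the induction to $|G|$ and handle $N$ one minimal normal subgroup at a time rather than declaring it avoidable.
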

\begin{proof}
We proceed by induction on $|G|$. If $G$ is simple, then $k=1$ and a Sylow $p_1$-subgroup of $G$ fits as~$K$. Thus, $G$ includes a proper nontrivial minimal normal subgroup~$M$. If $|M|$ is coprime to $\pi=\{p_1,p_2,\ldots,p_k\}$, then, by induction, $G/M$ includes a required $\pi$-subgroup and, by the Schur--Zassenhaus theorem, so does $G$.

Let now $\pi\cap\pi(M)\neq\varnothing$. Without loss of generality, we may assume that $M=A_1$. If $R$ is a Sylow $p_1$-subgroup of $M$, then $G=N_G(R)M$ due to the Frattini argument. Since $N_G(R)$ satisfies the hypothesis of the lemma, $G=N_G(R)$ by inductive arguments. In view of the Jordan--H\"{o}lder theorem, the groups $A_2,\ldots, A_k$ are among the chief factors of $\ov{G}=G/R$. Therefore, $\ov{G}$ includes a solvable subgroup $\ov{K}$ with $\pi(\ov{K})=\{p_2,\ldots,p_k\}$. The preimage $K$ of $\ov{K}$ in $G$ is a required solvable subgroup.
\end{proof}

\begin{lem}\label{l:K-solvable}
Suppose that $K$ is a nontrivial normal subgroup of a group $G$ and $r\in\pi(K)$. Then there exists a subgroup $H$ in $G$
with a normal solvable subgroup $M$ such that $H/M\simeq G/K$, $\pi(M)\subseteq\pi(K)$, and $M$ is a product of its Sylow subgroups $T_1,\ldots, T_k$, where $T_1$ is an $r$-group, and $T_j\subseteq N_M(T_i)$ for every $j>i$.
\end{lem}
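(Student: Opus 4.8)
The plan is to construct the subgroup $H$ together with its normal solvable subgroup $M$ by induction on $|G|$, peeling off Sylow subgroups of $K$ one prime at a time while using Frattini-type arguments to keep control of the quotient. First I would handle the base: if $K$ is itself an $r$-group, take $H=G$ and $M=K$, which trivially satisfies all the conditions with a single Sylow subgroup $T_1=K$. For the inductive step, pick a Sylow $r$-subgroup $R$ of $K$. If $R\trianglelefteq G$, then $\bar G=G/R$ has the normal subgroup $\bar K=K/R$ with $r\notin\pi(\bar K)$ but $\pi(\bar K)\subseteq\pi(K)$; however $\bar K$ might be trivial, so one should instead choose the prime handled at this stage more carefully — the point is that we want $T_1$ to be the $r$-group, so I would treat $r$ first and only afterwards recurse on the remaining primes of $K$. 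Concretely, set $R$ to be a Sylow $r$-subgroup of $K$ and apply the Frattini argument $G=N_G(R)K$.

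The heart of the argument is the following reduction. Replace $G$ by $N_G(R)$: since $G=N_G(R)K$, we have $N_G(R)/(N_G(R)\cap K)\simeq G/K$, and $N_G(R)\cap K$ is a normal subgroup of $N_G(R)$ containing $R$ as a normal subgroup with $\pi(N_G(R)\cap K)\subseteq\pi(K)$. Now pass to $\bar G=N_G(R)/R$ and $\bar K=(N_G(R)\cap K)/R$. If $\bar K=1$, then $R=N_G(R)\cap K$ is already solvable, so $H=N_G(R)$, $M=R$, $k=1$ works. Otherwise $\bar K\neq 1$, and by induction applied to $\bar G$ with $\bar K$ and any prime $s\in\pi(\bar K)$, there is a subgroup $\bar H\leq\bar G$ with a normal solvable $\bar M\trianglelefteq\bar H$ such that $\bar H/\bar M\simeq\bar G/\bar K\simeq G/K$, $\pi(\bar M)\subseteq\pi(\bar K)\subseteq\pi(K)$, and $\bar M$ is a product of Sylow subgroups $\bar T_2,\ldots,\bar T_k$ with the stated normalizer-chain property (with indices shifted). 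Let $H$ be the preimage of $\bar H$ in $N_G(R)$ and $M$ the preimage of $\bar M$; then $M\trianglelefteq H$, $M$ is solvable (extension of the $r$-group $R$ by the solvable $\bar M$), $H/M\simeq\bar H/\bar M\simeq G/K$, and $\pi(M)\subseteq\{r\}\cup\pi(\bar K)\subseteq\pi(K)$.

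It remains to arrange $M$ as a product of its Sylow subgroups $T_1,\ldots,T_k$ with $T_1$ an $r$-group and $T_j\subseteq N_M(T_i)$ for $j>i$ — here I read the intended statement as $T_j\subseteq N_M(T_i)$, i.e.\ the product $T_1T_2\cdots T_i$ is a subgroup for each $i$ (a Sylow tower). Take $T_1=R$ (which is normal in $M$ since $M/R=\bar M$ is the image and $R$ is the full Sylow $r$-subgroup of the normal subgroup, so $R\trianglelefteq H\geq M$), and for $i\geq2$ let $T_i$ be a Sylow subgroup of $M$ mapping onto $\bar T_i$; since $\bar M=\bar T_2\cdots\bar T_k$ with the chain property and $R\trianglelefteq M$, the preimages give $R T_2\cdots T_i\trianglelefteq M$ for each $i$, and the normalizer inclusions lift accordingly. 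The main obstacle I anticipate is bookkeeping: making sure the quotient $G/K$ is preserved \emph{exactly} (not just up to the kernel) through the replacement of $G$ by $N_G(R)$ and the passage to $N_G(R)/R$, and verifying that the Sylow-tower/normalizer conditions lift cleanly through these two steps rather than merely holding in the quotient. Once the correct formulation of the normalizer chain is fixed, each lifting step is routine, but stating it so the induction closes is the delicate point.
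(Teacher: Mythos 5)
Your proposal is correct and follows essentially the same route as the paper's proof: induction on $|G|$, the Frattini argument $G=N_G(R)K$ to reduce to the case $G=N_G(R)$ for a Sylow $r$-subgroup $R$ of $K$, passage to $G/R$, and lifting the inductively obtained data via preimages (with $N_N$ read, correctly, as $N_M$). The paper is even terser than you are about why the normalizer chain lifts -- the clean justification is a Schur--Zassenhaus complement to $R$ in $M$ -- so your extra care there is a feature, not a deviation.
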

\begin{proof}
We start with a Sylow $r$-subgroup $T_1$ of $K$. It follows from
the Frattini argument that $N_G(T_1)/N_K(T_1)\simeq G/K$.
By induction on $\rvert G\rvert$, we may assume that $G=N_G(T_1)$. Put $\ov{G}=G/T_1$ and $\ov{K}=K/T_1$. If $K=T_1$, then we are done, so $\ov{K}$ and $\ov{G}$ satisfies the lemma hypothesis for some prime divisor $s$ of $\ov{K}$. Therefore, the conclusion of lemma holds for some subgroups $\ov{H}$ and $\ov{M}$ of $\ov{G}$. The preimages $H$ and $M$ of these subgroups in $G$ are as required.
\end{proof}


\begin{lem}\label{l:centralizer}
Let $G$ be a group, $A\leq\Aut(G)$, and let $N$ be an $A$-invariant normal subgroup of $G$ with $(|A|,|N|)=1$.
Then $C_{G/N}(A)$ is the image of $C_G(A)$ in $G/N$.
\end{lem}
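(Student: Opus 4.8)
The plan is to reduce everything to a standard coprime-action argument inside the semidirect product $\Gamma=G\rtimes A$. The inclusion $C_G(A)N/N\subseteq C_{G/N}(A)$ is immediate, since an automorphism from $A$ fixing $g\in G$ also fixes the coset $gN$, so the whole content of the lemma is the reverse inclusion. I would work in $\Gamma$ with the usual identifications of $A$ and $G$ with subgroups, so that $aga^{-1}=a(g)$ for $a\in A$, $g\in G$, and therefore $C_G(A)=C_\Gamma(A)\cap G$. Since $N\trianglelefteq G$ and $N$ is $A$-invariant, $N\trianglelefteq\Gamma$; in particular $N\trianglelefteq NA$, and from $N\cap A\subseteq G\cap A=1$ together with $|NA|=|N|\,|A|$ it follows that $A$ is a complement to $N$ in $NA$.

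Next I would fix $gN\in C_{G/N}(A)$ and a preimage $g\in G$, so that $a(g)g^{-1}\in N$ for all $a\in A$, and show that $g$ normalizes $NA$. This is a short computation: for each $a\in A$,
\[
g^{-1}ag=g^{-1}\,a(g)\,a=\bigl(g^{-1}(a(g)g^{-1})g\bigr)a\in (g^{-1}Ng)\,a=Na\subseteq NA,
\]
and $g^{-1}Ng=N$, so $g^{-1}(NA)g=NA$. Consequently $A^g=g^{-1}Ag$ lies in $NA$, meets $N$ trivially (because $N^g=N$), and has order $|A|$; hence $A^g$ is a second complement to $N$ in $NA$.

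The key step is then to apply the conjugacy part of the Schur--Zassenhaus theorem to the complements $A$ and $A^g$ of the normal subgroup $N$ of $NA$: as $(|N|,|A|)=1$, one of $N$ and $NA/N\simeq A$ has odd order and is therefore solvable, so there is $n\in N$ with $(A^g)^n=A$, i.e.\ $g':=gn$ normalizes $A$. Now $g'\in G$ and $g'N=gN$; moreover, since $g'$ normalizes $A$ we get $[g',a]=g'^{-1}a^{-1}g'a\in A$ for every $a\in A$, while $[g',a]=g'^{-1}\,a^{-1}(g')\in N$ because $\overline{g'}=\overline{g}$ is fixed by $A$. Hence $[g',A]\subseteq A\cap N=1$, so $g'\in C_\Gamma(A)\cap G=C_G(A)$ and $gN=g'N$ lies in the image of $C_G(A)$, which completes the proof.

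The semidirect-product bookkeeping and the two commutator identities are routine, so the only real obstacle is the conjugacy of complements in Schur--Zassenhaus (equivalently, the vanishing of the first nonabelian cohomology $H^1(A,N)$ in the coprime case): this is precisely where the hypothesis $(|A|,|N|)=1$ enters, and in the generality stated it relies on the Feit--Thompson odd order theorem to guarantee that one of $N$, $A$ is solvable.
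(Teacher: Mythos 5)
Your argument is correct and is, in substance, the same proof the paper invokes: the paper simply cites \cite[Corollary~3.28]{Isaacs} together with the Feit--Thompson theorem, and your semidirect-product argument via the conjugacy part of Schur--Zassenhaus (with Feit--Thompson supplying solvability of one of $N$, $A$) is precisely the standard derivation of that cited result. All the computational steps check out, so nothing further is needed.
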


\begin{proof} Apply \cite[Corollary~3.28]{Isaacs} and the Feit--Thompson theorem.
\end{proof}


The following lemma shows that the simple linear groups $L_n(2)$ are saturated with various Frobenius subgroups.

\begin{lem}\label{l:frob}
The group $L_n(2)$, $n\geq2$, includes a Frobenius subgroups with kernel of order $2^n-1$ and cyclic complement of order $n$, as well as
a Frobenius subgroup with kernel of order $2^k$ and cyclic complement of order $2^k-1$ for every $2\leq k\leq n-1$.
\end{lem}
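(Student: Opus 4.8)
The plan is to realize both families of Frobenius subgroups concretely inside $L_n(2)=GL_n(2)$ using the natural module $V=\mathbb{F}_2^n$ and a field extension. For the first subgroup, I would pass to $\mathbb{F}_{2^n}$ and view $V$ as a one-dimensional $\mathbb{F}_{2^n}$-space. Then the multiplicative group $\mathbb{F}_{2^n}^\times$ acts as a cyclic group $C$ of order $2^n-1$ of $\mathbb{F}_2$-linear transformations (a Singer cycle), and the Galois group $\mathrm{Gal}(\mathbb{F}_{2^n}/\mathbb{F}_2)$, which is cyclic of order $n$ generated by the Frobenius $x\mapsto x^2$, also acts $\mathbb{F}_2$-linearly and normalizes $C$. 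The semidirect product $C\rtimes\langle\sigma\rangle$ sits inside $GL_n(2)$, and it is a Frobenius group: a nontrivial power $\sigma^j$ (with $0<j<n$) fixes only the elements of $\mathbb{F}_{2^{(n,j)}}$ inside $\mathbb{F}_{2^n}$, so it centralizes no nontrivial element of $C$, i.e. $C_C(\sigma^j)=1$. Hence the complement $\langle\sigma\rangle$ acts fixed-point-freely on the kernel $C$, giving exactly the required Frobenius subgroup with kernel of order $2^n-1$ and cyclic complement of order $n$.

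For the second family, fix $2\le k\le n-1$ and decompose $V=W\oplus U$ with $\dim W=k$, $\dim U=n-k\ge 1$. Inside the stabilizer of this decomposition I would take, on the $W$-summand, a Singer-type Frobenius group of $GL_k(2)$: namely a full elementary abelian group $E$ of order $2^k$ together with a cyclic group $D$ of order $2^k-1$ acting fixed-point-freely on it. Concretely, realize $W$ as $\mathbb{F}_{2^k}$, let $E$ be the additive group acting by translations is not linear — so instead use the standard construction: let $D=\mathbb{F}_{2^k}^\times$ act by multiplication on $W=\mathbb{F}_{2^k}$, which is fixed-point-free, and let $E$ be a $D$-invariant $2$-subgroup. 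A clean way: embed $AGL_1(2^k)=\mathbb{F}_{2^k}\rtimes\mathbb{F}_{2^k}^\times$ into $GL_{k}(2)$ by letting it act on $\mathbb{F}_{2^k}$ affinely after adjoining one extra coordinate; this needs $\dim = k+1$, which is why the hypothesis $k\le n-1$ (so $n\ge k+1$) is exactly what is needed. Pad by the identity on the remaining $n-k-1$ coordinates. The affine group $AGL_1(2^k)$ is a Frobenius group with elementary abelian kernel of order $2^k$ and cyclic complement of order $2^k-1$, and this property is inherited by the embedded copy in $GL_n(2)$.

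The only genuine point to check is that these subgroups actually lie in $L_n(2)=PSL_n(2)=SL_n(2)=GL_n(2)$ (over $\mathbb{F}_2$ all three coincide since the determinant and center are trivial), so there is no obstruction coming from passing to the simple group — this is the mild simplification that makes $q=2$ convenient. The main obstacle, such as it is, is bookkeeping: verifying fixed-point-freeness of the complement in each case. For the Singer cycle this reduces to the elementary fact that $\mathrm{Gal}(\mathbb{F}_{2^n}/\mathbb{F}_2)$ acts with no nonzero fixed points on $\mathbb{F}_{2^n}^\times$ under the twisted action $c\mapsto \sigma c\sigma^{-1}=c^2$, i.e.\ $c^{2^j}=c$ forces $c\in\mathbb{F}_{2^{(j,n)}}^\times$, a proper subfield when $0<j<n$, and one then notes the complement is cyclic of order $n$ as claimed. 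For the affine group the fixed-point-free action of $\mathbb{F}_{2^k}^\times$ on $\mathbb{F}_{2^k}$ by multiplication is immediate. I would also remark that the first statement is the classical fact that $L_n(2)$ contains a Frobenius group $\mathbb{Z}_{2^n-1}\rtimes\mathbb{Z}_n$, sometimes attributed to the structure of the normalizer of a Singer cycle, so one could alternatively just cite it.
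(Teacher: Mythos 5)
The paper itself gives no construction here---it simply cites \cite{GrVas} and \cite{Grech15}---so your attempt to exhibit the subgroups explicitly is a genuinely different (and more informative) route. Your second construction is correct: $AGL_1(2^k)=\mathbb{F}_{2^k}\rtimes\mathbb{F}_{2^k}^{\times}$ is a Frobenius group with elementary abelian kernel of order $2^k$ and cyclic complement of order $2^k-1$, the affine action linearizes on a $(k+1)$-dimensional $\mathbb{F}_2$-space (which is exactly why $k\leq n-1$ is assumed), and over $\mathbb{F}_2$ there is no issue with determinants or centers.

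The first construction, however, contains a genuine error. You correctly compute that the fixed points of $\sigma^j$ on the Singer cycle $C$ are $C\cap\mathbb{F}_{2^{(j,n)}}^{\times}$, but you then conclude this is trivial because $\mathbb{F}_{2^{(j,n)}}$ is a \emph{proper} subfield. A proper subfield has trivial multiplicative group only when it is the prime field: $|C_C(\sigma^j)|=2^{(j,n)}-1>1$ whenever $(j,n)>1$. So the normalizer of a Singer cycle is a Frobenius group only for $n$ prime, whereas in this paper $n=2^l$ is the worst case: $\sigma^{n/2}$ centralizes a subgroup of order $2^{n/2}-1$. Moreover, the assertion cannot be repaired by choosing a cleverer subgroup: for $n=2^l\geq4$ the $3$-part of $2^n-1$ equals $3$ (only the factor $2+1$ of $\prod_{i}(2^{2^i}+1)$ is divisible by $3$), a Frobenius kernel of order $2^n-1$ would be nilpotent and hence have a characteristic Sylow $3$-subgroup $\mathbb{Z}_3$, and since $|\Aut(\mathbb{Z}_3)|=2<n$ some nontrivial element of the cyclic complement of order $n$ would centralize it. What is true, and what the application in the proof that $N=1$ actually requires, is that $\langle\sigma\rangle$ acts fixed-point-freely on the Hall subgroup of $C$ avoiding all proper subfields---for instance the subgroup of order $2^{n/2}+1$, or of order $k_n(2)$---which yields a Frobenius subgroup of $L_n(2)$ with odd kernel and cyclic complement of order $n$. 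You should prove that corrected statement (your fixed-point computation already does most of the work) rather than the one claimed.
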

\begin{proof}
Both assertions are well known, see, e.g., \cite[Lemma~5]{GrVas} and \cite[Lemma~2.5]{Grech15}.
\end{proof}

The last two lemmas show how one can use Frobenius subgroups to deal with spectra of group extensions.


\begin{lem}{\em\cite[Lemma~1]{Maz97-2}}\label{l:frob-action} Let $P$ be a normal $p$-subgroup of a finite group $G$ and let $G/P$ be a Frobenius group with kernel $F$ and cyclic complement $C$. If $p$ does not divide $F$ and
$F\not\subseteq PC_G(P)/P$, then $G$ contains an element of order $p|C|$.
\end{lem}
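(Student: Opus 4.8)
The plan is to reduce to a situation where $P$ is elementary abelian and then to exhibit the required element by a direct cocycle computation in the coset of a generator of $C$. First I would invoke Thompson's theorem that the Frobenius kernel $F$ is nilpotent: then $F$ is the product of its Sylow subgroups, each characteristic in $F$ and hence normalized by the complement $C$, so each $F_q\rtimes C$ is again a Frobenius group. Since $F\not\le PC_G(P)/P$, some $F_q$ acts nontrivially on $P$, and passing to the preimage of $F_q\rtimes C$ we may assume $F=Q$ is a $q$-group with $q\neq p$. Letting $H$ be the preimage of $Q$, the Schur--Zassenhaus theorem gives $H=P\rtimes Q_0$ with $Q_0\cong Q$ acting coprimely and (by the hypothesis, rephrased via coprimality) nontrivially on $P$. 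Finally I would replace $G$ by $G/\Phi(P)$ to assume $P$ elementary abelian, noting that $Q_0$ still acts nontrivially by Burnside's basis theorem and that passing to a quotient by a normal $p$-subgroup is harmless here: it suffices to produce an element of order \emph{divisible by} $p|C|$, since an appropriate power of it then has order exactly $p|C|$.

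Now $\bar G=G/P=Q\rtimes C$ acts on the elementary abelian group $P$ (a well-defined action, as $P$ is abelian); write $\rho$ for it, put $s=|C|\geq2$, fix a generator $c$ of $C$ and a preimage $g\in G$. Writing $P$ additively, one computes $(wg)^s=\nu(w)+g^s$ for $w\in P$, where $\nu=1+\rho(c)+\cdots+\rho(c)^{s-1}$; as $w$ ranges over $P$ this runs over the coset $g^s+\nu(P)$. If this coset is nonzero---that is, if $\nu\not\equiv0$ on $P$ or $g^s\neq1$---then some $wg$ has $(wg)^s\neq1$, and since $wg$ maps to $c$ of order $s$ while $(wg)^s$ has order $p$, it has order exactly $sp=p|C|$; this handles the ``generic'' case.

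The hard part is to rule out the remaining case, in which $\nu\equiv0$ on $P$ and every lift of $c$ has order $s$. Here I would first choose the lift $g$ inside $N_G(Q_0)$ (possible by the Frattini argument, since $N_G(Q_0)$ maps onto $G/H\cong C$); then $g$ normalizes $Q_0$ and $B:=\langle Q_0,g\rangle$ is a complement to $P$ in $G$ isomorphic to the Frobenius group $\bar G$, so $G=P\rtimes B$. For each $y\in Q_0$ the element $yc\in B$ lies in the coset of $c$, hence generates a complement and has order $s$, so the same computation gives $(v\cdot yc)^s=\nu_y(v)$ with $\nu_y=\sum_{h\in\langle yc\rangle}\rho(h)$; if $\nu_y(v)\neq0$ for some $y,v$ we are done as before. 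Otherwise all these norms vanish, and since the subgroups $\langle yc\rangle$ ($y\in Q_0$) are precisely the complements of $Q_0$ in $B$ (using that $x\mapsto x^{-1}x^{c^{-1}}$ is a bijection of $Q_0$, a standard consequence of the fixed-point-free action of $c$), summing over all complements---whose nonidentity parts partition $B\setminus Q_0$---yields $\sum_{h\in B}\rho(h)=|Q_0|(e-\mathrm{id})$, where $e$ is the projection of $P$ onto $C_P(Q_0)$ (defined as $p\nmid|Q_0|$). Computing the same sum along the transversal $\langle c\rangle$ of $Q_0$ in $B$ instead gives $\sum_{h\in B}\rho(h)=|Q_0|\,e\,\nu=0$. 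Hence $e=\mathrm{id}$, i.e.\ $Q_0$ centralizes $P$, contradicting our reduction. I expect this final summation argument---the place where the Frobenius structure of $\bar G$ and the coprimality $p\nmid|Q|$ are genuinely used---to be the main obstacle; the reductions in the first paragraph and the cocycle identity in the second are routine.
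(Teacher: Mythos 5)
The paper does not prove this lemma at all: it is imported verbatim from Mazurov \cite[Lemma~1]{Maz97-2}, so there is no internal proof to compare against. Your argument, read on its own terms, is correct and complete. The reductions (to a Sylow subgroup of $F$, to a split extension via Schur--Zassenhaus, to $P$ elementary abelian via $\Phi(P)$, with the order-divisibility remark to pass back) are all sound, the cocycle identity $(wg)^s=\nu(w)+g^s$ is right, and the endgame works: the $|Q_0|$ subgroups $\langle yc\rangle$ are pairwise distinct conjugates of $C$ whose nonidentity parts partition $B\setminus Q_0$, so the two evaluations of $\sum_{h\in B}\rho(h)$ give $|Q_0|(e-\mathrm{id})=|Q_0|\,e\,\nu=0$, forcing $e=\mathrm{id}$ and contradicting the nontrivial action. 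Two remarks on the route. First, the appeal to Thompson's theorem is unnecessary weight: Schur--Zassenhaus (with its conjugacy part, available since $P$ is solvable) and the Frattini argument apply to the full preimage of $F$, which is already coprime to $p$ by hypothesis, so you could take $Q=F$ and skip the nilpotency of Frobenius kernels entirely. Second, the classical proof (Mazurov's, and the textbook treatments) instead quotes the standard module-theoretic fact that a Frobenius group $FC$ acting coprimely on $V$ with $C_{[V,F]}(F)=0$ has $C_{[V,F]}(C)\neq 0$, and then lifts a fixed vector; your averaging argument over the complements is an elementary, self-contained substitute for that black box, and it also handles cleanly the case $p\mid |C|$, where the naive ``fixed vector'' conclusion would not immediately give an element of order $p|C|$.
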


%

\begin{lem}\label{l:action}
Suppose that $K$ is a normal subgroup of $G$ and $G/K\simeq S_1\times S_2\times\ldots\times S_m$, where $S_i$ are nonabelian simple groups.
Suppose that each $S_i$ includes a Frobenius subgroup $X_i$ which kernel is a $p_i$-group for some prime $p_i$ and complement is of prime order $s_i\not\in\pi(K)$, where $s_i$ is not a Fermat prime.
If $r\in\pi(K)$, primes $s_1,\ldots,s_m$ are pairwise distinct, and $r\not\in\{p_1,\ldots,p_m\}$, then $rs_1\cdots s_m\in\omega(G)$.
\end{lem}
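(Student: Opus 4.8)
The plan is to strip off the possibly non-solvable normal subgroup $K$ with the help of Lemma~\ref{l:K-solvable}, and then to adjoin the primes $s_1,\dots,s_m$ to a normal $r$-subgroup one at a time by repeated use of Lemma~\ref{l:frob-action}.

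First I would apply Lemma~\ref{l:K-solvable} to $K\trianglelefteq G$ and the prime $r$. This yields a subgroup $H\leq G$ with a normal solvable subgroup $M$ such that $H/M\simeq G/K\simeq S_1\times\cdots\times S_m$, $\pi(M)\subseteq\pi(K)$, and $M=T_1\cdots T_t$ is a product of pairwise permutable Sylow subgroups in which $T_1$ is a nontrivial $r$-group normalized by every $T_j$. Hence $T_1$ is the normal, and so unique, Sylow $r$-subgroup of $M$; in particular $M/T_1$ is an $r'$-group, and since $T_1$ is Sylow in $M\trianglelefteq H$ the Frattini argument gives $H=MN_H(T_1)=N_H(T_1)$, that is, $T_1\trianglelefteq H$. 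Because each $X_i$ is a Frobenius group, $p_i\neq s_i$; together with $r\in\pi(M)$, $r\notin\{p_1,\dots,p_m\}$ and $s_i\notin\pi(K)\supseteq\pi(M)$, the primes $r$, $p_i$, $s_i$ are pairwise distinct and no $s_i$ divides $|M|$. It now suffices to find an element of order $rs_1\cdots s_m$ in $H$.

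For this I would induct on $m$ along the chain $M=N_0\trianglelefteq N_1\trianglelefteq\cdots\trianglelefteq N_m=H$, where $N_j$ is the preimage in $H$ of $S_1\times\cdots\times S_j$, so that $N_j/N_{j-1}\simeq S_j$; note that $N_{m-1}$, its normal solvable subgroup $M$ and the factors $S_1,\dots,S_{m-1}$ again satisfy the hypotheses of the lemma, now with $M$ in the role of $K$. The base case $m=1$ is essentially the content of Lemma~\ref{l:frob-action}, applied to the preimage of $X_1$ in $H$ after a further reduction handling the $r'$-radical of $M$. For the inductive step one pulls a Frobenius subgroup $X_m=F_m\rtimes C_m$ back from $S_m\simeq H/N_{m-1}$ to $H$ and applies Lemma~\ref{l:frob-action} once more, inside an appropriate subgroup of the preimage of $X_m$ and with a well-chosen chief $r$-factor lying inside $T_1$ as the normal $p$-subgroup there, so as to attach the new prime $s_m$ to the element of order $rs_1\cdots s_{m-1}$ already constructed over $N_{m-1}$. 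To make the bookkeeping work one must carry through the induction not merely the statement $rs_1\cdots s_{m-1}\in\omega(N_{m-1})$, but a strengthening of it recording enough of the ``normal $r$-subgroup together with Frobenius quotient'' datum to serve as input for the next application of Lemma~\ref{l:frob-action}; that the $S_i$ commute in $H/M$ is exactly what lets the successive pieces be chosen to normalize one another and be nested compatibly.

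The hard part will be the verification, at each application of Lemma~\ref{l:frob-action}, of the nontriviality hypothesis that the relevant Frobenius kernel does not centralize the chosen normal $r$-subgroup. When this fails for a particular choice, one passes to a smaller chief $r$-factor inside $T_1$ and repeats; the assumption that each $s_i$ is not a Fermat prime is precisely what prevents this descent from failing at every stage, since a cyclic group whose order is a non-Fermat prime cannot act on an $r$-group so as to make all the relevant configurations degenerate. Finally one checks that the element produced has order exactly $rs_1\cdots s_m$ rather than a proper divisor, which is immediate because none of the $s_i$ divides $|K|$, so the complements of the $X_i$ inject into $H/M$.
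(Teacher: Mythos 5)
There is a genuine gap, and it sits exactly where you wave your hands: the inductive step. Your opening move (Lemma~\ref{l:K-solvable}, normalizing $T_1$, reducing modulo $\Phi(T_1)$ so that the $r$-part is elementary abelian) agrees with the paper. But Lemma~\ref{l:frob-action} is the wrong engine for the induction: it takes a normal $p$-subgroup $P$ with $G/P$ Frobenius and produces a single element of order $p|C|$; it offers no mechanism whatsoever for ``attaching the new prime $s_m$ to the element of order $rs_1\cdots s_{m-1}$ already constructed''. An element of order $rs_1\cdots s_m$ requires $m$ pairwise commuting elements of orders $s_1,\ldots,s_m$ all centralizing one nontrivial $r$-element, and the ``strengthening of the inductive statement'' that you admit is needed to record this is never formulated --- it is the entire content of the proof. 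Your account of where the non-Fermat hypothesis enters (``descend to a smaller chief $r$-factor and repeat'') is also not a workable mechanism and does not correspond to any actual argument.

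The paper's proof supplies precisely the missing ideas. For $m=1$ it first forces a \emph{faithful} action: if $C_G(T_1)\not\leq K$ one is done at once, and otherwise a Hall $r'$-subgroup $J$ of $C_G(T_1)$ is normal in $G$, and Lemma~\ref{l:centralizer} applied in $G/J$ yields $C_G(T_1)=T_1$. It then lifts $X_1$ through the Sylow tower by Schur--Zassenhaus to a subgroup $Y\rtimes\langle g\rangle$ acting on $T_1$ with $[Y,g]\neq1$ acting faithfully, and invokes the cross-characteristic Hall--Higman theorem (\cite[Lemma~3.6]{Vas15}) --- \emph{not} Lemma~\ref{l:frob-action} --- to conclude $C_{T_1}(g)\neq1$; the hypothesis that $s_1$ is not a Fermat prime is exactly what excludes the exceptional configurations of that theorem. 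The conclusion $C_{T_1}(g)\neq 1$ is strictly stronger than $rs_1\in\omega(G)$, and it is this fixed-point statement that drives the induction: since the image of $g$ lies in $S_1$, Lemma~\ref{l:centralizer} shows that $C_G(g)$ covers $S_2\times\cdots\times S_m$ and still contains the nontrivial normal $r$-subgroup $C_{T_1}(g)$, so induction inside $C_G(g)$ produces a commuting element of order $rs_2\cdots s_m$, whence $rs_1\cdots s_m\in\omega(G)$. Without the reduction to $C_G(T_1)=T_1$, the Hall--Higman fixed-point argument, and the passage to centralizers, your outline cannot be completed as written.
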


\begin{proof}
By Lemma~\ref{l:K-solvable}, we may assume that $K$ is a product of its Sylow
subgroups $T_1,\ldots, T_k$ such that $T_1$ is an $r$-group and $T_j\leq N_{K}(T_i)$ for every $i$, $j $ with $1\leq i<j\leq k$. Factoring $G$ and $K$ by the Frattini subgroup $\Phi(T_1)$, we arrive at a situation where $T_1$ is elementary abelian. If $T_1\cap Z(G)\neq 1$, then there is nothing to prove, so $T_1$ acts faithfully by conjugation on~$G$ and can be considered as a subgroup of $\Aut(G)$.

We proceed by induction on~$m$ and begin with $m=1$. If $C_G(T_1)K/K\neq 1$, then $C_G(T_1)K/K=S_1$ and, clearly, there is an element in $G$ whose order equals $rs_1$.
Therefore, $C_G(T_1)\leq K$ and there exists a Hall $r'$-subgroup $J$ of $C_G(T_1)$. Then $C_G(T_1)=T_1\times J$
and $J$ is normal in $G$, because $C_G(T_1)$ is a normal in~$G$.
By Lemma~\ref{l:centralizer}, the images of $C_G(T_1)$ and $T_1$ in $G/J$ coincide.
Hence we may assume that $C_G(T_1)=T_1$.

Recall that $s_1\not\in\pi(K)$ and at most one of $T_i$ is a $p_1$-group. Therefore, applying consequently the Schur--Zassenhaus theorem to the preimages of $X_1$ in factors $G/(T_1\cdots T_i)$ for $i=k,k-1,\ldots,1$ we obtain  that there exists a subgroup $X=Y\rtimes\langle g\rangle$ of $G/T_1$, where $Y$ is a $p_1$-group, $|g|=s_1$ and $X_1$ is an image of $X$ in $G/K$. The subgroup $[Y,g]$ is the preimage of a Frobenius kernel of $X_1$, so $[Y,g]\neq1$. The action of $X$ on $T_1$ is faithful, because $C_G(T_1)=T_1$. It follows now from the cross-characteristic analogue of the Hall--Higman theorem (see, e.g., \cite[Lemma~3.6]{Vas15}) that $C_{T_1}(g)\neq1$, so $rs_1\in\omega(G)$.

Let $m\geq2$. By the above arguments, there exists an element $g\in G$ of order $s_1$ such that $C_{T_1}(g)\neq1$ and the image $g_1$ of $g$ in $G/K$ lies in~$S_1$.
Lemma~\ref{l:centralizer} implies that $C_G(g)K/K=C_{G/K}(g_1)$. Hence $C_G(g)/(K\cap C_G(g))$ being isomorphic to $C_G(g)K/K$ includes a subgroup $S$ isomorphic to $S_2\times\cdots\times S_m$.
Applying the inductive hypothesis to the preimage of $S$ in $C_G(g)$, we obtain an element of order $rs_2\cdots s_m$ in $C_G(g)$. Thus, $rs_1s_2\cdots s_m\in\omega(G)$, as required.
\end{proof}

\section{Proof of Theorem 1}
According to the hypothesis of Theorem~\ref{t:main}, $L=L_n(2)$, where $n=2^l\geq56k^2$, and $G$ is a finite group isospectral to $P=L^k$. By \cite{GrVas}, we may assume that $k\geq2$ and thereby $n\geq2^8$. We fix a chief series $G_0=1\lhd G_1\lhd\ldots\lhd G_s=G$ of $G$, where $A_i=G_i/G_{i-1}$ is a nontrivial minimal normal subgroup of $G/G_{i-1}$ for $1\leq i\leq s$.

Primitive prime divisors $r_i(2)$ for $2\leq i\leq n$ and $i\neq6$ are denoted by~$r_i$.
Put $\Omega=\{ r_{n/2+1}, r_{n/2+2},\ldots, r_n\}$ and observe that $\Omega$ is a coclique of size $n/2$ in the prime graph~$\Gamma(L)$ due to Lemma~\ref{l:spec Ln(2)}. It is also clear from the definition of $r_i$ and Fermat's little theorem that $r>n/2\geq2^7$ for every $r\in\Omega$.

\begin{lem}\label{l:groups-NK}
There exists a normal subgroup $N$ of $G$ such that $\overline{G}=G/N$
includes a normal subgroup $K$ satisfying the following conditions:
\begin{enumerate}[{\em(i)}]
\item $C_{\overline{G}}(K)=1;$
\item  $K=S_1\times S_2\times\ldots\times S_m$, where $m\leq k$ and each $S_i$ is a nonabelain simple group with $|\pi(S_i)\cap\Omega|>k;$
\item there exists $\Delta\subseteq\Omega$ such that $|\Delta|\geq 3n/8+8k$ and each $p\in\Delta$ is coprime to $|N|\cdot|\overline{G}/K|$.
\end{enumerate}
\end{lem}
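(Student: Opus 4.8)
The plan is to build the subgroup $K$ by locating enough primes from the large coclique $\Omega$ that appear "high up" in the chief series, so that their Sylow normalizers are forced to involve nonabelian simple composition factors. Concretely, I would first split $\Omega$ according to where its primes occur: for each $r\in\Omega$, look at the (unique) chief factor $A_{i(r)}$ whose order is divisible by $r$ among those chief factors that are \emph{nonabelian}; primes for which no nonabelian chief factor is divisible by $r$ would, via Lemma~\ref{l:r-primes} applied to the solvable part, produce a solvable subgroup whose element orders have at most $k$ prime divisors (since $P=L^k$ and the coclique $\Omega$ together with Lemma~\ref{l:key} bounds such products), and then Lemma~\ref{l:Zhang}(ii) bounds the number of such primes by $6k$. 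Thus all but at most $6k$ primes of $\Omega$ are "absorbed" by nonabelian chief factors. Since $|\Omega|=n/2$, this leaves at least $n/2-6k$ primes distributed among the nonabelian chief factors $A_{i_1},\dots,A_{i_t}$.

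Next I would bound $t$, the number of nonabelian chief factors, using the fact that $G$ is isospectral to $L^k$: a nonabelian chief factor is (a power of) a nonabelian simple group, and an element realizing a product of primes from $\Omega$ sitting in distinct chief factors would, after passing to $\Aut$ of the socle via Lemma~\ref{l:rob} and using that $\omega(G)=\omega(L^k)$, contradict Lemma~\ref{l:key} once we have more than $k$ such factors each contributing a prime of $\Omega$ — so at most $k$ of the nonabelian chief factors can carry primes of $\Omega$, i.e. effectively $t\le k$ after discarding the irrelevant ones. Then by pigeonhole one of these $k$ factors, say $A=A_{i_1}$, carries at least $(n/2-6k)/k$ primes of $\Omega$, which exceeds $k$ by the hypothesis $n\ge 56k^2$; in fact each of the $m\le k$ relevant factors must individually carry more than $k$ primes of $\Omega$ (otherwise redistribute and repeat the counting), which gives condition~(ii) with $S_i$ the simple factors of these $A_{i_j}$. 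To get $K$ as an honest normal subgroup with $C_{\overline G}(K)=1$, I would let $N$ be the product of all chief factors \emph{below} the lowest relevant nonabelian one together with the solvable radical pieces that do not meet $\Delta$, pass to $\overline G=G/N$, take $K$ to be the (image of the) product of the relevant nonabelian chief factors lifted appropriately — standard Fitting-type arguments (the generalized Fitting subgroup, or directly: the centralizer of a product of nonabelian chief factors modulo the solvable radical is trivial) give $C_{\overline G}(K)=1$.

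For condition~(iii): the primes $r\in\Omega$ are all $>n/2\ge 2^7$ and are primitive prime divisors $r_i(2)$ with $i>n/2$; a prime $r_i(2)$ divides $|L_n(2)|=2^{\binom n2}\prod(2^j-1)$ only for $j$ a multiple of $i$, and since $i>n/2$ the only such $j\le n$ is $j=i$ itself, so each $r\in\Omega$ divides $|L|$ to the first power and behaves "rigidly". The primes of $\Delta$ should be chosen as those $r\in\Omega$ that divide the order of exactly one of the $S_i$ and divide neither $|N|$ nor $|\overline G/K|$; the count $|\Delta|\ge 3n/8+8k$ comes from starting with $n/2$, removing the $\le 6k$ absorbed by solvable pieces, removing a bounded number (at most $|\Omega|-(3n/8+\text{const})$, controlled because $m\le k$ and each $S_i$ as a section of $L^k$ can account for the primitive primes $r_i(2)$ for at most $n/2$ values of $i$, but the overlap between factors and the $\Out$ contributions are bounded linearly in $k$) lost to $\Out$ of the socle and to multiplicities. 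The arithmetic that $n/2 - O(k) \ge 3n/8 + 8k$ reduces to $n\ge c k$ for a small constant, comfortably covered by $n\ge 56k^2$.

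The main obstacle I expect is the bookkeeping in the previous paragraph: precisely controlling how many primes of $\Omega$ can be "lost" — to the solvable radical, to $\Out$ of the nonabelian socle (governed by Lemma~\ref{l:rob}, where the $Sym_{n_i}$ wreath factors and field/diagonal automorphisms of $L_{n_i}(2)$-type factors can only involve small primes, hence essentially none of $\Omega$), and to the possibility that a prime $r\in\Omega$ divides $|S_i|$ for more than one $i$ — and showing that after all these removals the surviving set $\Delta$ still has size at least $3n/8+8k$. This is where the quadratic hypothesis $n\ge 56k^2$ is genuinely used (rather than merely a linear bound), because one also needs each individual $S_i$ to retain $>k$ primes of $\Omega$ after the global removals, and distributing a deficit of size $O(k)$ plus ensuring a per-factor surplus of $k$ across $k$ factors forces $n/(2k)-O(1)>k$, i.e. $n=\Omega(k^2)$. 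Everything else — the existence of the solvable subgroups, the centralizer computation, the rigidity of primitive prime divisors — follows from Lemmas~\ref{l:Zhang}, \ref{l:key}, \ref{l:r-primes}, \ref{l:rob} and the spectrum description in Lemma~\ref{l:spec} more or less mechanically.
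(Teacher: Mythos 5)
Your overall strategy is in the right family (exploit the coclique $\Omega$ via Lemma~\ref{l:key}, Lemma~\ref{l:r-primes} and the function $\rho$, then pass to a quotient and take a socle), but there are concrete gaps. The most serious one is your justification that at most $k$ nonabelian chief factors carry primes of $\Omega$: you argue via ``an element realizing a product of primes from $\Omega$ sitting in distinct chief factors,'' but chief factors are sections stacked in a series, not direct factors, so no such element need exist; Lemma~\ref{l:r-primes} only yields a \emph{solvable subgroup} whose order is divisible by those primes, and a solvable group need not contain an element of order equal to the product of all its prime divisors --- this is precisely why $\rho(k)$ (which is of order $6k$, not $k$) enters the picture at all. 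The paper avoids this by a different decomposition: $N$ is chosen \emph{maximal} among normal subgroups all of whose chief factors $A$ satisfy $|\pi(A)\cap\Omega|\le k$, and $K=\Soc(G/N)$. Then every minimal normal subgroup of $G/N$ has more than $k$ primes of $\Omega$ by maximality, the $S_i$ are honest direct factors of $K$, and $m\le k$ follows from Lemma~\ref{l:key} by multiplying commuting elements of distinct prime orders taken from distinct $S_i$ --- an argument that genuinely needs the direct-product structure which your ``lifted appropriately'' construction does not supply.

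Two further points. First, your bookkeeping for (iii) is off: the primes of $\Omega$ lost to $N$ are not $O(k)$ but $O(k\rho(k))=O(k^2)$, because $N$ may contain up to $\rho(k)$ nonabelian chief factors each absorbing up to $k$ primes of $\Omega$; the inequality $|\Delta|\ge n/2-k(\rho(k)+1)\ge 3n/8+8k$ is exactly where the hypothesis $n\ge 56k^2$ is consumed (via $\rho(k)\le 7k-9$), so your claim that the arithmetic for (iii) ``reduces to $n\ge ck$'' is false. Second, bounding $\pi(\overline{G}/K)\cap\Omega$ requires showing $|\pi(\Out(S_i))\cap\Omega|\le 1$ for the $S_i$ as they stand at this stage, i.e.\ for \emph{arbitrary} nonabelian simple groups: a priori $S_i$ could be a group of Lie type over a field of order $v^d$ with $d$ divisible by two primes of $\Omega$, and ruling this out takes the element-order bound of Lemma~\ref{l:spec Ln(2)}(iii). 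Your appeal to ``field/diagonal automorphisms of $L_{n_i}(2)$-type factors'' is circular, since the identification of the $S_i$ as linear groups over the field of order two only happens several lemmas later.
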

\begin{proof}
Suppose that for every chief factor $A_i$ of $G$ the set $\pi(A_i)\cap\Omega$ has at most $k$ elements. By Lemma~\ref{l:Zhang}(ii), $|\Omega|=n/2\geq28k^2>k\cdot\rho(k)$.
Therefore, one can choose at least $|\Omega|/k>\rho(k)$ distinct primes in $\Omega$ dividing orders of different chief factors of $G$. Lemma~\ref{l:r-primes} implies that $G$ includes a solvable subgroup $H$ of order divisible by each of these primes. It follows from Lemma~\ref{l:key} that $|\pi(h)|\leq k$ for every $h\in H$. Since $|\pi(H)|>\rho(k)$, we arrive at a contradiction with the definition of $\rho(k)$.

Let $N$ be a normal subgroup of $G$ of the largest possible order such that $|\pi(A)\cap\Omega|\leq k$ for every chief factor $A$ of~$N$. By above, $\overline{G}=G/N\neq1$. We claim that $N$ and $K=\Soc(\overline{G})$ satisfy the conclusion of the theorem.

Firstly, $K\simeq M_1\times\ldots\times M_t$, where $M_j$ are the minimal normal subgroups of $\overline{G}$. By the choice of $N$, we have $|\pi(M_j)\cap\Omega|>k\geq2$ for each $j\in\{1,\ldots,t\}$. Since every subgroup $M_j$ is characteristically simple in $G$, it must be a direct product of groups isomorphic to some nonabelian simple group $R_j$. In particular, $K\cap C_{\overline{G}}(K)=1$.
On the other hand, $K$ includes all minimal normal subgroups of $\overline{G}$ and hence $C_{\overline{G}}(K)=1$, which proves~(i).

Since $M_j$ is a direct power of $R_j$, it follows that $\pi(R_j)=\pi(M_j)$ for each $j\in\{1,\ldots,t\}$. Therefore, $|\pi(S_i)\cap\Omega|>k$ for every composition factor $S_i$ of~$K=S_1\times\ldots\times S_m$. Lemma~\ref{l:key} yields $m\leq k$, so (ii) follows.

We claim that $|\pi(N)\cap\Omega|\leq k\cdot\rho(k)$. Assume the opposite and consider chief factors $T_1,\ldots, T_y$ of $N$ such that
$\pi(N)\cap\Omega\subseteq\pi(T_1)\cup\ldots\cup\pi(T_y)$ and $y$ is minimal.
Since $|\pi(T_i)\cap\Omega|\leq k$ for each factor $T_i$, $i=1,\ldots,y$, it follows that there are more than $\rho(k)$ factors $T_i$ having pairwise distinct primes from $\Omega$. Lemma~\ref{l:r-primes} implies that
$N$ includes a solvable subgroup $M$ with $|\pi(M)\cap\Omega|>\rho(k)$.
Therefore, there exists $g\in M$ such that $|\pi(g)\cap\Omega|>k$, which contradicts Lemma~\ref{l:key}.

Now we prove that at most $k$ primes from $\Omega$ divide $|\overline{G}/K|$. Since $C_{\overline{G}}(K)=1$, Lemma~\ref{l:rob} implies that $\overline{G}/K$ is isomorphic to a subgroup of $(\Out(S_1)\times\ldots\times\Out(S_m))\cdot Sym_m$. Since $r>n/2>k\geq m$ for every $r\in\Omega$, it follows that $$\pi(\ov{G}/K)\cap\Omega\subseteq\bigcup_{i=1}^m\pi(\Out(S_i)).$$
Thus, it suffices to prove that $|\pi(\Out(S_i))\cap\Omega|\leq1$ for all factors~$S_i$. If not, then without loss of generality we may suppose that $r$ and $r'$ are two distinct primes from $\pi(\Out(S_1))\cap\Omega$. The simple group $S_1$ must be a group of Lie type, because $2\not\in\Omega$. Let the underlying field of $S_1$ be of order $u=v^d$, where $v$ is a prime.

If $rr'$ divides $d$, then $S_1$ contains an element of order greater or equal to $$(u-1)/2\geq(2^{rr'}-1)/2>(2^{n^2/4}-1)/2>2^n,$$
which contradicts Lemma~\ref{l:spec Ln(2)}(iii). Therefore, only one of the primes $r$ and $r'$ can divide the order of the field automorphism of $S_1$. It follows that $S_1$ is a linear or unitary group and at least one of $r$ and $r'$ divides the dimension $n_1$ of $S_1$. By Lemma~\ref{l:spec}, the group $S_1$ includes an element of order at least $u^{n_1-2}-1$. If $rr'$ divides $n_1$, then  $u^{n_1-2}-1\geq(2^{rr'-2}-1)>2^n$. If $r'$ divides $n_1$ and $r$ does not, then $r$ divides $d$ and $u^{n_1-2}-1\geq(2^{r(r'-2)}-1)>2^n$. In the both cases, we again arrive at a contradiction to Lemma~\ref{l:spec Ln(2)}(iii).

Put $\Delta=\Omega\setminus(\pi(N)\cup\pi(\overline{G}/K))$. Then $|\Delta|\geq|\Omega|-k\rho(k)-k=n/2-k(\rho(k)+1)$.
Lemma~\ref{l:Zhang}(iii) yields  $7k\geq\rho(k)+9$, so $n/8\geq 7k^2\geq k(\rho(k)+9)$.
As easily seen, the latter inequality is equivalent to $n/2-k(\rho(k)+1)\geq 3n/8+8k$.
This implies that $|\Delta|\geq 3n/8+8k$, as required.
\end{proof}

Now we fix the subgroup $N$ of $G$, subgroups $K$, $S_1,\ldots,S_m$ of $\ov{G}=G/N$, and subset $\Delta$ of $\Omega$ as in Lemma~\ref{l:groups-NK}, and denote $\Delta_i=\pi(S_i)\cap\Delta$ for $i=1,\ldots,m$.

\begin{lem}\label{l:gk}
For any $k$ distinct primes $p_1,\ldots,p_k$ from $\Delta$, there is an element $g\in K$ of order $p_1\cdots p_k$. For every such element $g$ and every $i=1,\ldots,m$, $\pi(g)\cap\pi(S_i)\neq\varnothing$.
\end{lem}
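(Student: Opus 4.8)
The plan is to produce the element $g$ by applying Lemma~\ref{l:r-primes} to the group $\overline{G}$ (or rather to $K$), and then to use the structure of $K$ together with the size estimates on the $\Delta_i$ to force every factor $S_i$ to be hit. First I would recall that, by Lemma~\ref{l:groups-NK}, $K=S_1\times\cdots\times S_m$ with $m\le k$ and $|\pi(S_i)\cap\Omega|>k$; since each prime of $\Delta$ is coprime to $|N|$, the primes of $\Delta$ that divide $|K|$ (equivalently $|\overline G|$, since $\overline G/K$ contributes none of them) are exactly those dividing some $|S_i|$. The chief factors of $K$ are the simple factors of the direct powers making up each $M_j$, so each prime $p\in\Delta_i$ divides a chief factor of $G$ isomorphic to $S_i$; these chief factors lie in pairwise distinct positions of a chief series. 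Given any $k$ distinct primes $p_1,\dots,p_k\in\Delta$, each $p_t$ divides $|S_{j(t)}|$ for some $j(t)$, hence divides a chief factor of $G$; applying Lemma~\ref{l:r-primes} to $G$ with these $k$ primes and the corresponding $k$ chief factors yields a solvable subgroup $H\le G$ with $\pi(H)=\{p_1,\dots,p_k\}$, but this gives only a solvable subgroup, not an element — so instead I would apply Lemma~\ref{l:r-primes} directly inside $K$ (whose chief factors are simple), obtaining a solvable $\pi$-subgroup of $K$; then, since the $p_t$ are pairwise nonadjacent in $\Gamma(L)$ and hence... Actually the cleanest route: each $p_t$ divides $|S_{j(t)}|$, and a Sylow $p_t$-subgroup of $S_{j(t)}$ is nontrivial; choosing one element of order $p_t$ in $S_{j(t)}$ for each $t$ and multiplying inside the direct product $K$, one would need the $j(t)$ to be distinct. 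This is exactly where the second sentence of the lemma and the hypothesis $|\pi(S_i)\cap\Omega|>k$ come in.

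So the key step is the following pigeonhole/adjacency argument. Suppose $g\in K$, say $g=g_1\cdots g_m$ with $g_i\in S_i$, has order $p_1\cdots p_k$ with $p_t\in\Delta\subseteq\Omega$ distinct. Each $p_t$ divides $|g_{i}|$ for exactly one $i=i(t)$, because $g_i$ lies in $L_n(2)$ (the $S_i$ are composition factors of $K$, but I must be careful: $S_i$ are the composition factors, while the minimal normal subgroups $M_j$ are powers of simple groups $R_j$ — I would work with the $M_j$ or equivalently note $g$ decomposes over the simple direct factors). Since the primes in $\Omega$ are pairwise nonadjacent in $\Gamma(L_n(2))$ and each $S_i\cong L_n(2)$ has prime graph with $\Omega$ a coclique, two distinct primes from $\Omega$ cannot both divide the order of a single element of $S_i$; hence the map $t\mapsto i(t)$ is such that... no, that only says each element of $S_i$ contributes at most one prime of $\Omega$ to $|g|$, i.e. at most $m\le k$ primes of $\Omega$ can divide $|g|$ — consistent with $k$ primes exactly, forcing the map $t\mapsto i(t)$ to be a bijection onto $\{1,\dots,m\}$ when $m=k$, and more generally surjective is what we want. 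This surjectivity is the assertion $\pi(g)\cap\pi(S_i)\ne\varnothing$ for all $i$. To get it when $m<k$ one cannot rely on counting alone; but the existence half lets us choose the $p_t$ freely, and the point is: for the specific $g$ we construct, we will build it to hit every $S_i$, and then observe that the adjacency constraint forces the same for any element of that order.

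For the existence half I would argue constructively: since $|\Delta_i|=|\pi(S_i)\cap\Delta|$ and $\Delta\subseteq\Omega$ with $|\Omega\cap\pi(S_i)|>k$ and only $\le k\rho(k)+k$ primes of $\Omega$ were removed to form $\Delta$ — here I need $|\Delta_i|\ge 1$ for every $i$, which follows because the $\Delta_i$ partition (the $\Omega$-part of) $\Delta$ across the $m\le k$ factors and $|\Delta|\ge 3n/8+8k$ is large while each $\pi(S_i)\cap\Omega$ is large; more precisely each $S_i\cong L_n(2)$ so $\pi(S_i)\supseteq\Omega$ and thus $\Delta_i=\Delta$ for every $i$ — wait, that would make all $\Delta_i$ equal, which is fine and in fact makes the distinctness of the $j(t)$ trivially arrangeable. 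Let me restate: since $S_i\cong L_n(2)$, $\pi(S_i)\supseteq\Omega\supseteq\Delta$, so $\Delta_i=\Delta$ for all $i$. Then given distinct $p_1,\dots,p_k\in\Delta$, pick for each $i\le m$ an element $h_i\in S_i$ of order $p_i$ (possible as $p_i\in\pi(S_i)$); the element $h=h_1\cdots h_m\in K$ has order $p_1\cdots p_m$. To reach order $p_1\cdots p_k$ when $m<k$, multiply into (say) $S_1$ an element of order $p_{m+1}\cdots p_k$: but the $p_t$ for $t>m$ lie in $\Omega$ and are pairwise nonadjacent, so $p_{m+1}\cdots p_k\notin\omega(S_1)$ — contradiction, hence in fact $m=k$. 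That is the real conclusion hiding here, and it is consistent: the lemma is used later precisely to pin down $m=k$. So the final structure is: (1) $\pi(S_i)\supseteq\Delta$ for all $i$; (2) therefore for distinct $p_1,\dots,p_k\in\Delta$, taking $h_i\in S_i$ of order $p_i$ gives $g=h_1\cdots h_m$ of order $p_1\cdots p_m$, and since $\omega(K)=\prod$ and each $S_i$-factor of any order-$(p_1\cdots p_k)$ element carries $\le 1$ prime from $\Omega$ (coclique!), we need $m\ge k$, whence $m=k$ and the element exists with each $h_i\ne 1$; (3) for any element $g=g_1\cdots g_m$ of order $p_1\cdots p_k$, the $k=m$ primes must be distributed one per factor, so $\pi(g)\cap\pi(S_i)\ne\varnothing$ for every $i$.

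The main obstacle I expect is the bookkeeping between the composition factors $S_i$ and the minimal normal subgroups $M_j$ (which are powers of simple $R_j$, not the $S_i$ themselves): one must be careful that "element of $K$" decomposes over the simple direct factors, that each such factor is $\cong L_n(2)$ and thus contains $\Omega$ in its prime set, and that the coclique property of $\Omega$ in $\Gamma(L_n(2))$ transfers to each simple direct factor so that a single element of one factor cannot absorb two primes of $\Delta$. Once that is set up, the count $m\le k$ from Lemma~\ref{l:groups-NK}(ii) combined with "$k$ primes, each needing its own factor" does all the work, and simultaneously upgrades $m\le k$ to $m=k$ — which is the payoff of this lemma for the rest of the proof.
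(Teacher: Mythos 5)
Your argument has a genuine circularity. At the point where Lemma~\ref{l:gk} is proved, the only information available about the factors $S_i$ is Lemma~\ref{l:groups-NK}: each $S_i$ is a nonabelian simple group with $|\pi(S_i)\cap\Omega|>k$. You repeatedly use much stronger facts --- that $S_i\cong L_n(2)$, hence $\pi(S_i)\supseteq\Omega$ (so $\Delta_i=\Delta$), and that $\Omega$ is a coclique in $\Gamma(S_i)$ so that a single element of $S_i$ cannot carry two primes of $\Omega$. None of this is known yet; identifying the $S_i$ is precisely what the rest of Section~4 (Lemmas~\ref{l:sporadic}--\ref{l:exceptional}, \ref{l:m=k}, \ref{l:Ai}, \ref{l:SiEqL}) is devoted to, and this lemma is one of the tools used to get there. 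For the same reason your claimed byproduct $m=k$ cannot be extracted here: it is Lemma~\ref{l:m=k}, established later by a substantially longer case analysis. Your constructive existence argument (pick $h_i\in S_i$ of order $p_i$) also fails without knowing $p_i\in\pi(S_i)$.

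Both halves have short non-circular proofs that you circle around but never land on. Existence: each $p_j$ lies in $\pi(L)$, so $p_1\cdots p_k\in\omega(L^k)=\omega(P)=\omega(G)$; by Lemma~\ref{l:groups-NK}(iii) every $p_j$ is coprime to $|N|\cdot|\overline{G}/K|$, so an element of $G$ of this order has an image of the same order in $\overline{G}$, and that image must lie in $K$. No appeal to the structure of the $S_i$ is needed. Second statement: if $\pi(g)\cap\pi(S_i)=\varnothing$, then the $S_i$-component of $g$ is trivial, so $g$ centralizes $S_i$; since $|\pi(S_i)\cap\Omega|>k$ there is $r\in(\pi(S_i)\cap\Omega)\setminus\{p_1,\ldots,p_k\}$ and an element $h\in S_i$ of order $r$, whence $gh$ has order $rp_1\cdots p_k$, a product of $k+1$ primes from the coclique $\Omega$ of $\Gamma(L)$ --- impossible in $\omega(L^k)=\omega(G)$ by Lemma~\ref{l:key}. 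Note that the coclique property is only needed in $\Gamma(L)$, where it is already established, not in $\Gamma(S_i)$.
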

\begin{proof}
The first statement of the lemma holds, because $p_1\cdots p_k\in\omega(G)=\omega(P)$ and each $p_i$ does not divide $|N|\cdot|\ov{G}/K|$ in view of Lemma~\ref{l:groups-NK}(iii).

If $\pi(g)\cap\pi(S_i)=\varnothing$ for some $i\in\{1,\ldots,m\}$, then $g$ centralizes $S_i$ and one can take an element $h\in S_i$ of order $r\in\pi(S_i)\cap\Omega\setminus\{p_1,\ldots,p_k\}$. It follows that $rp_1\cdots p_k\in\omega(K)\setminus\omega(G)$; a contradiction.
\end{proof}

\begin{lem}\label{l:pi(S)}
For every $i=1,\ldots,m$, $|\Delta_i|>3n/8+7k$.
\end{lem}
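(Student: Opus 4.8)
The plan is to argue by contradiction, using only Lemma~\ref{l:gk} and the lower bound $|\Delta|\geq 3n/8+8k$ furnished by Lemma~\ref{l:groups-NK}(iii); in fact the statement is essentially an immediate consequence of these two facts. So fix $i\in\{1,\ldots,m\}$ and suppose, for contradiction, that $|\Delta_i|\leq 3n/8+7k$.

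Then, since $\Delta_i\subseteq\Delta$, we get $|\Delta\setminus\Delta_i|=|\Delta|-|\Delta_i|\geq(3n/8+8k)-(3n/8+7k)=k$, so I may pick $k$ pairwise distinct primes $p_1,\ldots,p_k\in\Delta\setminus\Delta_i$. These all lie in $\Delta$, and since by definition $\Delta_i=\pi(S_i)\cap\Delta$, none of $p_1,\ldots,p_k$ divides $|S_i|$; that is, $p_j\notin\pi(S_i)$ for every $j$.

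Now apply Lemma~\ref{l:gk} to $p_1,\ldots,p_k$: it yields an element $g\in K$ of order $p_1\cdots p_k$ with $\pi(g)\cap\pi(S_i)\neq\varnothing$. But $\pi(g)=\{p_1,\ldots,p_k\}$ because the $p_j$ are distinct primes, and we have just seen that $\{p_1,\ldots,p_k\}\cap\pi(S_i)=\varnothing$; this contradicts $\pi(g)\cap\pi(S_i)\neq\varnothing$. Hence $|\Delta_i|>3n/8+7k$, as required.

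There is no real obstacle in this argument: the only things one has to keep straight are the bookkeeping relations $\Delta_i=\pi(S_i)\cap\Delta\subseteq\Delta$ and the trivial observation that an element whose order is a product of $k$ distinct primes has exactly those $k$ primes in its prime spectrum. Everything substantive has already been done in Lemmas~\ref{l:groups-NK} and~\ref{l:gk}.
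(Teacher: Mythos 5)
Your argument is correct and is essentially identical to the paper's own proof: both derive from $|\Delta\setminus\Delta_i|\geq k$ the existence of $k$ distinct primes in $\Delta\setminus\Delta_i$, invoke the first part of Lemma~\ref{l:gk} to produce $g\in K$ of order $p_1\cdots p_k$, and contradict the second part since $\pi(g)\cap\pi(S_i)=\varnothing$. The only cosmetic difference is that you phrase it as a contradiction from $|\Delta_i|\leq 3n/8+7k$ while the paper directly concludes $|\Delta\setminus\Delta_i|<k$ and then $|\Delta_i|>|\Delta|-k\geq 3n/8+7k$.
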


\begin{proof}
If there exist $k$ distinct primes $p_1,\ldots,p_k$ in $\Delta\setminus\Delta_i$, then one can take an element $g\in K$ of order $p_1\cdots p_k$ according to the first statement of Lemma~\ref{l:gk}. However, this contradicts the second statement of the same lemma. Thus, $|\Delta\setminus\Delta_i|<k$, so $|\Delta_i|>|\Delta|-k\geq 3n/8+7k$.
\end{proof}

Recall that $t(G)$ denotes the maximal size of a coclique in the prime graph of a group~$G$.

\begin{lem}\label{l:t(S)}
For every $i=1,\ldots,m$, either $t(S_i)> 3n/8+6k$ or $|\pi(g)\cap\pi(S_i)|\geq2$ for every $g\in K$ with $|\pi(g)|=k$ and $\pi(g)\subseteq\Delta$.
\end{lem}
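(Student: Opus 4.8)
The plan is to fix an index $i$ and suppose, for contradiction, that $t(S_i)\le 3n/8+6k$ while there exists an element $g\in K$ with $|\pi(g)|=k$, $\pi(g)\subseteq\Delta$, and $|\pi(g)\cap\pi(S_i)|\le 1$. By Lemma~\ref{l:gk} we always have $\pi(g)\cap\pi(S_i)\ne\varnothing$, so in fact $|\pi(g)\cap\pi(S_i)|=1$; write $\pi(g)\cap\pi(S_i)=\{p\}$. The idea is to combine this single-prime condition with the lower bound $|\Delta_i|>3n/8+7k$ from Lemma~\ref{l:pi(S)} to produce a coclique in $\Gamma(S_i)$ that is too big. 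The key mechanism is the following: for every prime $r\in\Delta_i$ one can, using that $r\in\Omega\subseteq\pi(L)$ so $r$ divides the order of some element of $K$ together with the $k-1$ primes of $\pi(g)\setminus\{p\}$ lying outside $\pi(S_i)$, argue that such an element of order $r\cdot\prod_{q\in\pi(g)\setminus\{p\}}q$ must in fact have its $r$-part inside $S_i$ (the other primes centralize $S_i$); hence for distinct $r,r'\in\Delta_i$, if $rr'\in\omega(S_i)$ then $r\cdot r'\cdot\prod_{q\in\pi(g)\setminus\{p\}}q\in\omega(K)$, which would give an element of $K$ with $k+1$ prime divisors all lying in $\Delta\subseteq\Omega$, contradicting Lemma~\ref{l:key} since $\Omega$ is a coclique of size $n/2>k$ in $\Gamma(L)$ and $K\le\bar G$ with $\omega(G)=\omega(P)$.

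Thus the first main step is to show $\Delta_i$ is a coclique in $\Gamma(S_i)$, or very nearly so — more precisely, that $\Delta_i\setminus\{p\}$ contains no edge of $\Gamma(S_i)$, which already gives a coclique of size at least $|\Delta_i|-1>3n/8+7k-1$. The second step is to contradict $t(S_i)\le 3n/8+6k$: since $3n/8+7k-1>3n/8+6k$ for $k\ge 1$, this is immediate once the coclique is produced. The only care needed is the bookkeeping around the distinguished prime $p$ and around the possibility that some primes of $\pi(g)\setminus\{p\}$ might accidentally divide $|S_j|$ for other $j$ — but those primes centralize $S_i$ by construction of $g$ via Lemma~\ref{l:gk}, and the argument only needs that they do not lie in $\pi(S_i)$, which holds because $|\pi(g)\cap\pi(S_i)|=1$.

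I expect the main obstacle to be making precise the claim ``the $r$-part of the relevant element of $K$ lies in $S_i$'': one must use that $r\in\Delta_i=\pi(S_i)\cap\Delta$ together with the fact that $K=S_1\times\cdots\times S_m$ is a direct product, so an element of order $r\cdot\prod_{q}q$ in $K$ decomposes componentwise, and the component carrying $r$ sits in a factor whose order is divisible by $r$, namely $S_i$ (using also that the other primes $q$, lying outside $\pi(S_i)$, force their components into the other factors, or into $S_i$ harmlessly). Handling the edge case where $r$ might divide $|S_j|$ for more than one $j$ requires invoking that the $S_j$ have pairwise ``large'' intersections with $\Omega$ but could still share small primes — however the primes in question lie in $\Omega$ and are large ($>n/2\ge 2^7$), and by the argument at the end of the proof of Lemma~\ref{l:groups-NK} each such prime divides $|\pi(S_j)|$ for at most... — in any case, the clean route is simply: if $rr'\in\omega(S_i)$ then, taking the element $g'\in S_i$ realizing $rr'$ and multiplying by the centralizing part of $g$ in the other factors, one gets $rr'\cdot\prod_{q\in\pi(g)\setminus\{p\}}q\in\omega(K)\subseteq\omega(\bar G)$, lift to $\omega(G)=\omega(P)$, and this product of $k+1$ primes from $\Omega$ contradicts Lemma~\ref{l:key}. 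Once phrased this way, no delicate divisibility analysis is needed, and the lemma follows.
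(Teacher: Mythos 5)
Your proposal is correct and follows essentially the same route as the paper: the ``clean route'' you settle on (take $h\in S_i$ of order $rr'$, multiply by the part of $g$ living in the other factors, and get an element whose order is divisible by $k+1$ primes from the coclique $\Delta\subseteq\Omega$, contradicting Lemma~\ref{l:key}) is exactly the paper's argument with $g'=g_i^{-1}g$, yielding that $\Delta_i\setminus\pi(g)$ is a coclique of size exceeding $3n/8+6k$. The preliminary worries about which factor carries the $r$-part are unnecessary, as you yourself note, since $r,r'\in\Delta_i\setminus\pi(g)$ and $\pi(g_i)\subseteq\{p\}$ make the bookkeeping immediate.
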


\begin{proof}
Suppose that for some $i\in\{1,\ldots,m\}$ there exists $g\in K$ with $|\pi(g)|=k$, $\pi(g)\subseteq\Delta$, and $|\pi(g)\cap\pi(S_i)|\leq1$. We claim that every two distinct primes in $\Delta_i\setminus\pi(g)$ are nonadjacent in $\Gamma(S_i)$. If this is not the case, then there is an element $h$ of order $rs\in\pi(S_i)$,  where $r$ and $s$ are distinct primes in $\Delta_i$ and at most one of them divides the order of~$g$. If $g=g_1\cdot\ldots\cdot g_m$, where $g_j\in S_j$ for $1\leq j\leq m$, and $g'=g_i^{-1}g$, then $g'\in C_K(h)$, so $|\pi(g'h)\cap\Delta|>k$, which is impossible by Lemma~\ref{l:key}. Therefore, $t(S_i)\geq|\Delta_i|-k>3n/8+6k$ by Lemma~\ref{l:pi(S)}.
\end{proof}

\begin{lem}\label{l:pi(g)=pi(h)}
Suppose that $g,h\in K$ such that $\pi(g)\cap\pi(h)=\varnothing$,
$|\pi(g)|=|\pi(h)|=k$, and $\pi(g),\pi(h)\subseteq\Delta$.
Then $|\pi(g)\cap\pi(S_i)|=|\pi(h)\cap\pi(S_i)|$ for every $i=1,\ldots,m$.
\end{lem}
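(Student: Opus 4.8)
The goal is to show that for two elements $g,h\in K$ whose prime sets are disjoint, both of size $k$, both contained in $\Delta$, the "distribution vectors" $(|\pi(g)\cap\pi(S_1)|,\ldots,|\pi(g)\cap\pi(S_m)|)$ and $(|\pi(h)\cap\pi(S_1)|,\ldots,|\pi(h)\cap\pi(S_m)|)$ coincide coordinate-wise. The strategy is to combine two facts we already have: by Lemma~\ref{l:gk} each of these vectors has all coordinates at least $1$, and both vectors sum to $k$; and by Lemma~\ref{l:key}, $K^{?}$ — more precisely $K$, which is a product of $m\le k$ nonabelian simple groups — cannot contain an element whose prime set meets $\Delta$ in more than $k$ primes. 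Since $\pi(g)$ and $\pi(h)$ are disjoint subsets of $\Delta$, the element $gh$ (a priori only well-defined after we arrange $g$ and $h$ to commute) would have $|\pi(gh)\cap\Delta|=2k$, which is impossible, so we must extract the precise combinatorial consequence.

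First I would decompose $g=g_1\cdots g_m$ and $h=h_1\cdots h_m$ with $g_i,h_i\in S_i$. Fix an index $i$ and suppose, toward a contradiction, that $|\pi(g)\cap\pi(S_i)|>|\pi(h)\cap\pi(S_i)|$ (the roles of $g,h$ being symmetric, this is the only case to treat). The plan is to build a single element of $K$ that simultaneously "uses" the $S_i$-primes of $g$ on the factor $S_i$ and the $S_j$-primes of $h$ on each other factor $S_j$, $j\ne i$, in such a way that its prime set inside $\Delta$ has size strictly greater than $k$. Concretely, set $x_i=g_i$ and $x_j=h_j$ for $j\ne i$, and let $x=x_1\cdots x_m\in K$. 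Then $\pi(x)\cap\Delta\supseteq(\pi(g)\cap\pi(S_i))\cup\bigcup_{j\ne i}(\pi(h)\cap\pi(S_j))$, and because $\pi(g)\cap\pi(h)=\varnothing$ these two contributions are disjoint; their total size is $|\pi(g)\cap\pi(S_i)|+\sum_{j\ne i}|\pi(h)\cap\pi(S_j)|=|\pi(g)\cap\pi(S_i)|+(k-|\pi(h)\cap\pi(S_i)|)>k$. This contradicts Lemma~\ref{l:key}, so $|\pi(g)\cap\pi(S_i)|\le|\pi(h)\cap\pi(S_i)|$ for every $i$; by symmetry in $g$ and $h$ we get equality throughout.

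The one subtlety to address carefully — and the only real obstacle — is making sure the element $x$ constructed above genuinely has the claimed prime set. Since $K=S_1\times\cdots\times S_m$ is a direct product, an element $x=(x_1,\ldots,x_m)$ has order $[\,|x_1|,\ldots,|x_m|\,]$, so $\pi(x)=\bigcup_i\pi(x_i)$; thus $\pi(x)\supseteq\pi(g_i)\cup\bigcup_{j\ne i}\pi(h_j)$. We have $\pi(g_i)=\pi(g)\cap\pi(S_i)$ because the $S_i$ are pairwise nonabelian-simple (so their prime sets inside $\Delta$ are pairwise disjoint — this follows from Lemma~\ref{l:key} since two factors sharing a prime $r\in\Delta\subseteq\Omega$ would, together with $k-1$ further primes of $\Delta$ placed on the other factors via Lemma~\ref{l:gk}, yield an element with $|\pi\cap\Delta|>k$; in fact it is cleaner to note directly that $\pi(g)\cap\pi(S_i)$ is exactly the support of $g_i$ by definition of $g_i$ as the $S_i$-component). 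So $\pi(x)\cap\Delta\supseteq(\pi(g)\cap\pi(S_i))\sqcup\bigsqcup_{j\ne i}(\pi(h)\cap\pi(S_j))$ as a disjoint union, giving the size bound above. No commuting issues arise because $x$ is literally an element of the direct product $K$, not a product of the given $g$ and $h$. The rest is the counting already sketched.
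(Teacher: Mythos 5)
Your proposal is correct and matches the paper's own argument: both decompose $g=g_1\cdots g_m$, $h=h_1\cdots h_m$ along the direct factors, swap a single component between the two elements, and derive from Lemma~\ref{l:key} that the resulting element of $K$ would have more than $k$ prime divisors in $\Delta$. The only difference is cosmetic (you replace $h_i$ by $g_i$ inside $h$, the paper replaces $g_i$ by $h_i$ inside $g$), and your extra care about $\pi(g_i)$ versus $\pi(g)\cap\pi(S_i)$ is a reasonable clarification of a point the paper leaves implicit.
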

\begin{proof}
Write $h=h_1\cdots h_m$ and $g=g_1\cdots g_m$, where $h_i,g_i\in S_i$ for $i=1,\ldots,m$. Assume to the contrary that there exists $i\in\{1,\ldots,m\}$ such that $|\pi(h_i)|>|\pi(g_i)|$. Since $\pi(g)\cap\pi(h)=\varnothing$, it follows that $|\pi(g')|>|\pi(g)|=k$ for $g'=g_1\cdots g_{i-1}h_ig_{i+1}\cdots g_m$ and $\pi(g')\subseteq\Delta$, which contradicts Lemma~\ref{l:key}.
\end{proof}

\begin{lem}\label{l:sporadic}
For every $i=1,\ldots,m$, the factor $S_i$ is not a sporadic group.
\end{lem}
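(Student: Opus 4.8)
The plan is to derive a contradiction from the assumption that some $S_i$ is sporadic by exploiting the huge lower bound on $|\Delta_i|$ established in Lemma~\ref{l:pi(S)}. Recall that every prime in $\Delta_i\subseteq\Omega$ exceeds $n/2\geq2^7=128$, so each such prime is a genuinely large prime divisor of $|S_i|$. The key point is that a fixed sporadic group has only finitely many prime divisors, all bounded by an absolute constant (the largest being $71$, dividing the Monster). Hence $|\pi(S_i)\cap\Omega|=0$ for any sporadic $S_i$ whatsoever, since no sporadic group has a prime divisor exceeding $128$. This already contradicts $|\Delta_i|>3n/8+7k\geq1$, since $\Delta_i\subseteq\pi(S_i)\cap\Omega$.

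The only subtlety I would address carefully is making the quantitative claim precise rather than hand-waving "sporadic groups have small primes." First I would invoke the classification/list of sporadic groups from the Atlas~\cite{atlas}: there are exactly $26$ of them, and the set of primes dividing their orders is contained in $\{2,3,5,7,11,13,17,19,23,29,31,37,41,43,47,59,67,71\}$, so in particular every prime divisor of a sporadic group's order is at most $71$. Since we have already observed (right after the definition of $\Omega$ in the text preceding Lemma~\ref{l:groups-NK}) that every $r\in\Omega$ satisfies $r>n/2\geq2^7=128>71$, no prime of $\Omega$ can divide the order of any sporadic group. Therefore $\pi(S_i)\cap\Omega=\varnothing$, whence $\Delta_i=\pi(S_i)\cap\Delta\subseteq\pi(S_i)\cap\Omega=\varnothing$.

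This contradicts Lemma~\ref{l:pi(S)}, which gives $|\Delta_i|>3n/8+7k>0$ (recall $n\geq2^8$ and $k\geq2$). Hence no $S_i$ can be a sporadic group, completing the proof. I do not anticipate any real obstacle here; the lemma is essentially a bookkeeping step whose purpose is to remove the sporadic cases from the subsequent case analysis on the possible isomorphism types of $S_i$ (which, combined with the bound $t(S_i)>3n/8+6k$ from Lemma~\ref{l:t(S)} or the two-prime condition, will eventually force each $S_i$ to be a linear group of the right shape). The one thing to double-check is that we are comparing against $\Omega$ (primes $>n/2$) and not some smaller set, but that is exactly how $\Delta_i$ was defined, so the argument is immediate.
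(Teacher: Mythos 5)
Your argument is correct and is essentially identical to the paper's own proof: both use Lemma~\ref{l:pi(S)} to get $\Delta_i\neq\varnothing$, note that every prime in $\Omega$ exceeds $n/2\geq128$, and observe that no sporadic group has a prime divisor that large (the paper cites the bound $100$ from the Atlas, you cite the sharper $71$). Nothing further is needed.
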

\begin{proof}
By Lemma~\ref{l:pi(S)}, $|\Delta_i|\geq1$ for each $i=1,\ldots,m$.
If $r\in\Delta_i$, then  $r>n/2\geq128$. However, it is well known, see, e.g., \cite{atlas}, that the prime divisors of the orders of the sporadic groups are less than $100$, a contradiction.
\end{proof}

\begin{lem}\label{l:alternaing}
For every $i=1,\ldots,m$, the factor $S_i$ is not an alternating group.
\end{lem}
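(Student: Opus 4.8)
The plan is to rule out $S_i \simeq Alt_d$ by exploiting that alternating groups have very small prime graph cocliques relative to their prime spectrum, which is incompatible with the lower bounds on $|\Delta_i|$ established in Lemma~\ref{l:pi(S)}. First I would recall the known description of $\Gamma(Alt_d)$: two primes $r < s$ in $\pi(Alt_d)$ are nonadjacent precisely when $r + s > d$ (for $r,s$ odd and not too close to $d$), so a coclique in $\Gamma(Alt_d)$ consists essentially of primes lying in the interval $(d/2, d]$ together with some bookkeeping about the prime $2$ and products $rs$ with $r+s \le d$. In particular, by the prime number theorem (or explicit bounds such as those of Dusart already invoked in Lemma~\ref{l:Zhang}), $t(Alt_d) = O(d/\log d)$, whereas $|\pi(Alt_d)| = \tau(d)$ is also $O(d/\log d)$; the crucial point is that \emph{every} prime in $\Delta_i \subseteq \pi(S_i) = \pi(Alt_d)$ satisfies $r > n/2$, which forces $d \ge$ (the largest such $r$), and simultaneously $|\Delta_i| > 3n/8 + 7k$ forces $\pi(Alt_d)$ to contain at least $3n/8$ primes all exceeding $n/2$.

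The main step is then a counting contradiction. Since $\Delta_i$ consists of more than $3n/8$ primes, each lying strictly between $n/2$ and $n$ (as $\Delta_i \subseteq \Omega$), and all of these are at most $d$, we get $d \ge n - 1$; but then the number of primes in $(n/2, n]$ is $\tau(n) - \tau(n/2) \le \frac{1.2551\, n}{\ln n} - \frac{n/2}{\ln(n/2)}$, which for $n \ge 2^8$ is far smaller than $3n/8$. (A clean way to see this: the interval $(n/2,n]$ has $n/2$ integers, and by Lemma~\ref{l:composite} at least $[n/3]+3 > n/3$ of the integers in $[n/2, n]$, or a comparable count, are composite, so fewer than $n/2 - n/3 = n/6 < 3n/8$ of them are prime.) Hence $\pi(Alt_d) \cap \Omega$ simply cannot be as large as $3n/8 + 7k$, and the case $S_i \simeq Alt_d$ is impossible. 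One should double-check the small edge: the inclusion $\Delta_i \subseteq \Omega$ (immediate since $\Delta_i = \pi(S_i) \cap \Delta \subseteq \Delta \subseteq \Omega$) and the fact, noted right after the definition of $\Omega$, that every $r \in \Omega$ satisfies $n/2 < r$ (indeed $r = r_i(2)$ with $i > n/2$ forces $r \equiv 1 \pmod i$, so $r \ge i + 1 > n/2$).

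There is a subtlety I would address explicitly: I used $\pi(Alt_d) = \pi(S_i)$ and the bound $r > n/2$ for $r \in \Delta_i$, but to conclude $d \ge r$ for such $r$ I only need that $r$ divides $|Alt_d|$, i.e. $r \le d$, which is exactly $\pi(Alt_d) = \{\text{primes} \le d\}$. Then the set $\Delta_i$ is a set of more than $3n/8$ primes in the range $(n/2, d]$ with $d \ge$ the largest element of $\Delta_i$; but I want an upper bound on $d$ too, or rather I want to bound the count of primes in $(n/2, n]$, and all elements of $\Delta_i$ lie in $(n/2, n]$ because $\Delta_i \subseteq \Omega$ and $\Omega \subseteq \{r_j(2) : n/2 < j \le n\}$ with $r_j(2) \le 2^j - 1 < 2^n$ — wait, that only bounds them by $2^n$, not by $n$. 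So I must be careful: the primes in $\Omega$ are \emph{not} all at most $n$. The correct argument is the reverse one: $\Omega$ has exactly $n/2$ elements (one primitive prime divisor $r_j(2)$ for each $j$ with $n/2 < j \le n$), and each such $r_j(2)$ satisfies $e(r_j(2), 2) = j \le d$ would be needed to divide $|Alt_d|$... this is getting delicate.

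Let me restate the genuinely robust version of the plan, which is the one I would actually write: the key obstruction and the resolution. If $S_i \simeq Alt_d$, then for each $r \in \Delta_i$ the prime $r$ divides $d!$, so $r \le d$; since $\Delta_i \subseteq \Omega$ and every $r \in \Omega$ exceeds $n/2$, we get more than $3n/8 + 7k$ primes in the interval $(n/2, d]$. On the other hand, adjacency in $\Gamma(Alt_d)$ among odd primes $r,s \le d$ holds whenever $r + s \le d$; so primes in $(n/2, d]$ need not be pairwise nonadjacent, and I cannot directly bound $t(Alt_d)$ this way. Instead I would pin down $d$: take the $k$ smallest primes $p_1 < \dots < p_k$ in $\Delta_i$; their product lies in $\omega(G) = \omega(L^k)$, and by Lemma~\ref{l:adj} (applied to $L$, since $p_j = r_{i_j}(2)$ with $i_j > n/2$) no two of them lie in the spectrum of a single copy of $L$ unless the $i_j$ divide one another, which for distinct values in $(n/2,n]$ is impossible — so $p_1 \cdots p_k \in \omega(L^k)$ forces each $p_j$ into a distinct factor, using all $k$ factors. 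The hard part, and the real content of the lemma, is to turn the size of $\Delta_i$ against $Alt_d$: I expect the intended argument uses that $\omega(Alt_d)$, hence $\omega(S_i) \subseteq \omega(G)$, contains $\mathrm{lcm}$'s of partitions of $d$, and combining a prime $r \in \Delta_i$ with $r \le d$ with the structure of $\Omega$ (a coclique of size $n/2$ in $\Gamma(L)$ coming from $i$ with $n/2 < i \le n$), one derives that $d$ must be comparable to $2^{n/2}$ or larger to accommodate so many large primes, whence $\omega(Alt_d) \not\subseteq \omega(L^k)$ for size reasons (an element of order a product of $k+1$ large primes, or of order exceeding $2^n \ge \max \omega(L^k)$ by Lemma~\ref{l:spec Ln(2)}(iii), shows up in $Alt_d$). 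I would therefore organize the proof as: (1) $S_i \simeq Alt_d$ has more than $3n/8$ distinct prime divisors each $> n/2$, so $d$ is enormous (at least the $(3n/8)$-th prime exceeding $n/2$, hence $d \gg n$, and in fact in $Alt_d$ one finds an element of order a product of $k+1$ primes from $\Omega$, since $d$ exceeds the sum of the $k+1$ smallest primes in $\Delta_i$); (2) such an element contradicts Lemma~\ref{l:key}, since $\Omega$ is a coclique of size $n/2 > k$ in $\Gamma(L)$ and hence $L^k$ has no element whose order is divisible by $k+1$ primes from $\Omega$. Step (1) — quantifying how large $d$ must be and producing the forbidden element order in $Alt_d$ — is the main obstacle, and it reduces to the elementary estimate that the sum of the $k+1$ smallest primes in $(n/2, n]$ is at most roughly $(k+1)n$, which is $\le d$ once $d$ is known to exceed even a single prime near $2^{n/2}$; this is where Lemma~\ref{l:composite}-type prime counting and the bound $n \ge 56k^2$ do the work.
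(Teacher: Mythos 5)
Your final plan hinges on producing, inside $Alt_d$, an element whose order is a product of $k+1$ primes from $\Omega$, which requires $d$ to be at least the sum of those $k+1$ primes; since every prime in $\Omega$ exceeds $n/2$, this needs $d>(k+1)n/2$. The gap is that nothing you have established gives a lower bound on $d$ anywhere near this. From ``$\Delta_i$ contains more than $3n/8$ primes, all at most $d$'' the most you can extract is $\tau(d)>3n/8$, i.e.\ $d\gtrsim(3n/8)\ln n$; with $n$ as small as $56k^2$ this is of order $k^2\log k$, far below $(k+1)n/2\approx 28k^3$, so the argument fails already for moderate $k$ at the minimal admissible $n$. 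Your fallback, that ``$d$ is known to exceed a single prime near $2^{n/2}$,'' is unsupported: the elements of $\Omega$ are primitive prime divisors $r_j(2)$ with $n/2<j\le n$, and all that is guaranteed is $r_j(2)\equiv1\pmod{j}$, hence $r_j(2)\ge j+1>n/2$; there is no provable lower bound placing any of them near $2^{n/2}$. (You correctly retracted your first count, which wrongly placed all of $\Omega$ inside $(n/2,n]$.) The variant via an element of order exceeding $2^n$ is even further out of reach, since by Landau's function that would force $d\gtrsim n^2/\ln n$.

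The paper needs only the far weaker bound $d\ge 2n+2$, because the contradiction it aims at is $2^{l+1}=2n\in\omega(Alt_d)\subseteq\omega(G)$ against Lemma~\ref{l:spec Ln(2)}(ii) --- a target your proposal never isolates. It reaches $d\ge2n+2$ by a dichotomy on $t(S_1)$: if $t(S_1)>3n/8$, the coclique-size formula for alternating groups combined with Lemma~\ref{l:composite} forces $d\ge 2n+2$; if $t(S_1)\le3n/8$, Lemmas~\ref{l:t(S)} and~\ref{l:pi(g)=pi(h)} produce an element of $S_1$ whose order is divisible by two primes from $\Omega'=\{r_j\in\Omega~|~j+1\text{ is composite}\}$, each of which satisfies $r_j\ge 2j+1>n+1$, and two such primes adjacent in $\Gamma(Alt_d)$ again force $d\ge2n+2$. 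This use of the sub-family $\Omega'$ (to obtain primes provably larger than $n+1$, rather than merely larger than $n/2$), together with the dichotomy on $t(S_1)$ and the $2^{l+1}$ obstruction, is the missing idea; without some substitute for it your counting contradiction does not close.
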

\begin{proof}
Suppose that one of $S_i$ is an alternating group of degree~$d$, for definiteness, $S_1\simeq Alt_d$.
To arrive at a contradiction it suffices to show that $d\geq 2n+2$, because in this case $2^{l+1}=2n\in\omega(S_1)\subseteq\omega(G)$, which is impossible by Lemma~\ref{l:spec Ln(2)}(ii).

First, we suppose that $t(S_1)>3n/8$. Then, see, e.g., \cite[Proposition~1.1]{VasVd11},
$$3n/8+1\leq t(S_1)\leq1+|\{x\leq p\leq 2x~|~p\text{ is prime}\}|,$$
where $x=[(d+1)/2]$. Since $n\geq256$, it follows that $x>59$ and Lemma~\ref{l:composite} yields
$|\{x\leq p\leq 2x~|~p\text{ is prime}\}|\leq x+1-([(2x)/3]+3)\leq x/3-4/3.$
Hence $3n/8\leq x/3-4/3$.  Then $x\geq n+4$ and, consequently, $d\geq 2n+2$, a contradiction.

Suppose now that $t(S_1)\leq 3n/8$. There exists an element $g\in K$ such that $|\pi(g)|=k$, $\pi(g)\subseteq\Delta$ and, by Lemma~\ref{l:t(S)}, $|\pi(g)\cap\pi(S_1)|\geq 2$.
Denote $\Omega'=\{r_i\in\Omega~|~i+1\text{ is composite}\}$. By little Fermat's theorem $i$ divides $r_i-1$ and, if $i+1$ is composite, then $i<r_i-1$, so $r_i-1\geq2i$. It follows that $r>2n+1$ for every $r\in\Omega'$.

By Lemma~\ref{l:composite}, the set $\{n/2+2,\ldots,2\cdot(n/2+2)\}$
contains at least $[n/3+4/3]+3$ composite numbers, so $|\Omega'|\geq n/3$.
Therefore, $$|\Omega'\cap\Delta_1|=|\Omega'|+|\Delta_1|-|\Omega'\cup\Delta_1|\geq|\Omega'|+|\Delta_1|-|\Omega|\geq n/3+3n/8+7k+1-n/2\geq2k.$$
It follows that there are at least $k$ primes in $(\Omega'\cap\Delta_1)\setminus\pi(g)$. Take an element $h=h_1\ldots h_m$ with $h_i\in S_i$, $i=1,\ldots,m$, such that $\pi(h)$ consists of these $k$ primes. Lemma~\ref{l:pi(g)=pi(h)} yields $|\pi(h_1)|\geq2$, so there are at least two primes greater than $n+1$ and which are adjacent in $\Gamma(S_1)$. It is possible only if $d\geq 2n+2$, which leads to a final contradiction.
\end{proof}

\begin{lem}\label{l:exceptional}
For every $i=1,\ldots,m$, the factor $S_i$ is not an exceptional group of Lie type.
\end{lem}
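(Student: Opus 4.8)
The plan is to rule out each family of exceptional groups of Lie type by comparing the possible primitive prime divisors in $S_i$ with the very large primes forced into $\Delta_i$, and then invoking the bound from Lemma~\ref{l:spec Ln(2)}(iii) together with the adjacency dichotomy of Lemma~\ref{l:t(S)}. First I would record the two hard constraints coming from the previous lemmas: by Lemma~\ref{l:pi(S)} we have $|\Delta_i| > 3n/8 + 7k$, and every prime $r \in \Delta_i$ satisfies $r > n/2 \geq 2^7$; moreover, by Lemma~\ref{l:spec Ln(2)}(iii), every element order in $S_i \subseteq K \leq \ov G$ is bounded by the element orders in $L = L_n(2)$, hence is at most $2^n/(2-1) = 2^n$. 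The strategy is that an exceptional group of Lie type over a field of order $u = v^d$ has Lie rank bounded by a small absolute constant (at most $8$, attained by $E_8$), so the number of ``cyclotomic'' prime divisor classes $R_i(u)$ with $i$ in the relevant range is bounded by an absolute constant, and the degrees $i$ of the torus factors are likewise bounded by an absolute constant (the largest being the Coxeter number, $30$ for $E_8$). This severely limits how many independent primes, and how large an element order, $S_i$ can afford.

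The main steps I would carry out, by family, are as follows. For the ``small'' exceptional families --- ${}^2B_2(u)$, ${}^2G_2(u)$, $G_2(u)$, ${}^3D_4(u)$, ${}^2F_4(u)$, $F_4(u)$ --- the Lie rank is at most $4$, so by Lemma~\ref{l:adj}-type bounds (or the explicit spectrum/prime-graph data in \cite{VasVd05,VasVd11}) the number of primes in any coclique, and the number of distinct $e(r,u)$-values, is bounded by an absolute constant $c_0$ (one can take $c_0 = 12$ comfortably). Since $|\Delta_i| > 3n/8 + 7k \geq 3 \cdot 2^8/8 > c_0$ and since primes in $\Delta_i$ that share the same value of $e(\cdot,u)$ are pairwise adjacent in $\Gamma(S_i)$ (Lemma~\ref{l:adj}(iv)), a counting argument produces either a coclique in $\Gamma(S_i)$ that is too large, or a prime $r \in \pi(S_i)$ with $e(r,u) = i_0$ for a fixed $i_0$; the latter forces $u^{i_0} \geq r > n/2$, hence $u^{c_0} > n/2$, while simultaneously $S_i$ contains a torus of order divisible by $u^{i_0}-1$ with several such primes, pushing some element order above $u^{c_0'} > 2^n$ once $n$ is large --- contradicting Lemma~\ref{l:spec Ln(2)}(iii). (The inequality $n = 2^l \geq 56 k^2 \geq 2^8$ is what makes all these ``$u^{\text{const}}$ vs.\ $2^n$'' comparisons go through.) For $E_6(u)$, ${}^2E_6(u)$, $E_7(u)$, $E_8(u)$, which have rank up to $8$, the same dichotomy applies with a slightly larger absolute constant, using that the outer diagonal/field automorphism contributions are also absolutely bounded in size so that $|\pi(\Out S_i) \cap \Delta|$ stays under control (as already exploited in the proof of Lemma~\ref{l:groups-NK}).

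More concretely, I expect the cleanest uniform argument to run like this: fix $i$ and let $u = v^d$ be the order of the field of $S_i$. By Lemma~\ref{l:t(S)}, either $t(S_i) > 3n/8 + 6k$, or there is $g \in K$ with $|\pi(g)| = k$, $\pi(g) \subseteq \Delta$, and $|\pi(g) \cap \pi(S_i)| \geq 2$, and in the latter case (arguing as in Lemma~\ref{l:alternaing}, using $\Omega'$ and Lemma~\ref{l:pi(g)=pi(h)}) we find two primes $> n+1$ adjacent in $\Gamma(S_i)$, in fact many such. But an exceptional group of Lie type has $t(S_i) \leq 12$ (see \cite[Table]{VasVd05} or \cite{VasVd11}), which already contradicts $t(S_i) > 3n/8 + 6k \geq 3 \cdot 2^8/8 = 96$; and in the remaining case the adjacency of primes $r, s \in \Delta_i$ forces (by Lemma~\ref{l:adj}(iii)) $e(r,u) = e(s,u) =: i_0 \leq 30$, whence $S_i$ has a cyclic torus of order a multiple of $k_{i_0}(u)$ divisible by at least two primes exceeding $n/2$, so this torus has order $> (n/2)^2 > n^2/4$; combined with $u^{i_0} - 1 \geq $ (product of these primes) $> n^2/4$ and with the presence of unipotent elements of order $v$ and the structure of element orders in exceptional groups, one gets an element of order exceeding $2^n$, contradicting Lemma~\ref{l:spec Ln(2)}(iii). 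The hard part will be packaging the case analysis uniformly rather than group-by-group: exceptional groups have idiosyncratic maximal tori and the bound ``$t(S_i) \leq 12$'' must be quoted carefully (it genuinely holds --- $E_8(u)$ has $t = 12$), so the bulk of the write-up is bookkeeping with the tables of \cite{VasVd05,VasVd11} and the observation that $3n/8 \geq 96$ already crushes every exceptional coclique bound, leaving only the ``few large adjacent primes'' scenario to dispatch via the element-order bound $2^n$.
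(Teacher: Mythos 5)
Your overall instinct --- that an exceptional group has too few ``cyclotomic classes'' of primes to accommodate the many primes forced into $\Delta_i$ --- is exactly the right one, and your first branch (quoting $t(S_i)\le 12$ for exceptional groups against $t(S_i)>3n/8+6k\ge 96$) is correct. But the proof does not close, because for exceptional groups the first branch of Lemma~\ref{l:t(S)} never occurs, so everything rests on your second branch, and the contradiction you propose there is not one. You want to conclude with an element of order exceeding $2^n$ via Lemma~\ref{l:spec Ln(2)}(iii), starting from inequalities of the shape $u^{i_0}>n/2$ or $u^{i_0}-1>n^2/4$ with $i_0\le 30$. These only force $u\gtrsim n^{1/15}$, which is a trivially small lower bound on the field size: for $n=2^8$ one gets $u\ge 2$ or so, and a group such as $E_8(2^5)$ genuinely contains primes of $\Omega$ (e.g.\ $R_{150}(2)\subseteq R_{30}(2^5)$) while all its element orders are far below $2^{256}$. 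No comparison of element orders with $2^n$ can rule this out. A secondary defect: you invoke Lemma~\ref{l:adj}(iii)--(iv) for $S_i$, but that lemma (and the function $\varphi$) is stated only for classical groups; in an exceptional group two adjacent primes need not satisfy $e(r,u)=e(s,u)$ --- they need only divide the order of a common maximal torus, which may be a product of several cyclotomic factors.

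The missing idea, which is how the paper argues, is a pure counting bound that never mentions element orders or field sizes. Since $\Delta_i\subseteq\Omega$ is a coclique of $\Gamma(L)$ and $\omega(G)=\omega(L^k)$, Lemma~\ref{l:key} implies that \emph{any} maximal torus of $S_i$ has order divisible by at most $k$ primes of $\Delta$. For $S_i\in\{E_8(u),E_7(u),E_6(u),{}^2E_6(u),F_4(u)\}$ every prime of $\pi(S_i)\setminus\{v\}$ lies in some $R_j(u)$ with at most $18$ admissible indices $j$, and each $R_j(u)$ sits inside a single maximal torus; for the remaining exceptional types there are at most $13$ classes of maximal tori outright. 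Hence $|\Delta_i|\le 18k+1$, which contradicts $|\Delta_i|>3n/8+7k$ from Lemma~\ref{l:pi(S)} because $n\ge 56k^2$. If you replace your ``element of order $>2^n$'' endgame with this torus-counting step, the lemma follows in a few lines and the group-by-group bookkeeping you were dreading disappears.
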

\begin{proof}
In this lemma, we use well-known information on the orders of simple exceptional groups of Lie type and their maximal tori, see, e.g., \cite[Table~6]{atlas}, \cite[Lemma 1.3]{VasVd05}, and \cite[Lemma~2.6]{VasVd11}.
Assume that $S_1$ is an exceptional group of Lie type over a field of order $u$ and characteristic~$v$.
If $S_1\in\{E_8(u), E_7(u), E_6(u), {}^2E_6(u), F_4(u)\}$, then each prime $r\in\pi(S_1)$
either is equal to $v$ or belongs to the set $R_j(u)$ of primitive prime divisors of $u^j-1$ for some integer $j$, where
in each of these cases, the number of possible indices $j$ is at most $18$. For each $R_j(u)$, there exists a maximal torus $T_j$ of $S_i$ such that $R_j(u)\subseteq\pi(T_j)$. If $S_1\not\in\{E_8(u), E_7(u), E_6(u), {}^2E_6(u), F_4(u)\}$, then $S_1$ includes (up to conjugation) at most 13 maximal tori. It is clear that the order of each of these maximal tori is divisible by at most $k$ primes from $\Delta$.  Therefore, $|\Delta_1|\leq 18k+1$ in all cases. Lemma~\ref{l:pi(S)} implies that $3n/8+7k<18k+1$, which is impossible since $n\geq 56k^2$.
\end{proof}

\begin{lem}\label{l:m=k} The equality $m=k$ holds.
\end{lem}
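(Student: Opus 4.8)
The plan is to show $m \ge k$; combined with Lemma~\ref{l:groups-NK}(ii) this gives $m=k$. By the previous lemmas (\ref{l:sporadic}, \ref{l:alternaing}, \ref{l:exceptional}), each composition factor $S_i$ is a classical group of Lie type. First I would record, for each $i$, the underlying field order $u_i$, characteristic $v_i$, and $\prk(S_i) = n_i$. The crucial arithmetic input is Lemma~\ref{l:spec Ln(2)}(iii): every element order in $\omega(G) = \omega(P)$ is at most $2^n/(2-1) = 2^n$, hence $u_i^{n_i - \text{const}} < 2^n$ roughly, which forces $u_i$ small and $n_i$ not too large relative to $n$. More precisely, using Lemma~\ref{l:adj}(i) together with the structure of maximal tori, the primes of $\Delta_i = \pi(S_i)\cap\Delta$ all have the form $r_j(u_i)$ (or $r_j(-u_i)$ in the unitary case) with $\varphi$-value at most $n_i$; since each $r\in\Delta$ satisfies $r > n/2 \ge 128$, and for a classical group the element orders coming from a single torus are bounded by about $u_i^{n_i}$, the bound $u_i^{n_i} \lesssim 2^n$ must hold while simultaneously $|\Delta_i| > 3n/8 + 7k$ by Lemma~\ref{l:pi(S)}.

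The heart of the argument is a counting estimate: I would bound $|\Delta_i|$ from above in terms of $n_i$ and $u_i$. For a classical group of rank $n_i$, the number of distinct values $\varphi(r, S_i)$ that can occur among $r \in \pi(S_i)\setminus\{v_i\}$ is at most $n_i$ (linear/unitary case) or about $3n_i/2$ (symplectic/orthogonal case, via Lemma~\ref{l:eta-estim}), and for each such value the contribution to $\Delta$ — primes from one "$e$-class" — can only help if the corresponding cyclotomic-type factor $k_j(u_i)$ still yields element orders below $2^n$. Combining $|\Delta_i| > 3n/8 + 7k$ with the $2^n$ ceiling on element orders (Lemma~\ref{l:spec Ln(2)}(iii)) and the description of spectra (Lemmas~\ref{l:spec}, \ref{l:spec2}), I expect to derive that $n_i$ is comparable to $n$ — specifically $n_i > 3n/4$ or so — and that $u_i = 2$, i.e. $S_i$ is (up to the linear/unitary dichotomy) a group $L_{n_i}^{\varepsilon}(2)$ with $n_i$ large. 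Then, for each $i$, the primes in $\Delta_i$ with $\varphi$-value in $(n_i/2, n_i]$ form a coclique in $\Gamma(S_i)$ by Lemma~\ref{l:adj}(iii) (distinct $e$-values), and there are at least $n_i/2 - (\text{small})$ of them.

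Now to get $m \ge k$: I would pick, in each $S_i$, a prime $r^{(i)} \in \Delta_i$, all $m$ of them distinct (possible since the $\Delta_i$ are large and pairwise we can avoid overlaps, or by a transversal/Hall-type argument since $|\Delta_i| > 3n/8 + 7k \gg m \le k$). Then $r^{(1)}r^{(2)}\cdots r^{(m)} \in \omega(K) \subseteq \omega(G) = \omega(P) = \omega(L^k)$. But all these primes lie in $\Omega$, which is a coclique of size $n/2$ in $\Gamma(L)$; by Lemma~\ref{l:key}, the $k$-th power $L^k$ contains no element whose order is a product of $k+1$ distinct primes from $\Omega$. Hence $m \le k$ — wait, that direction is already known; the point is to run it the other way. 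To force $m \ge k$: suppose $m < k$. Using Lemma~\ref{l:pi(g)=pi(h)}, the function $i \mapsto |\pi(g)\cap\pi(S_i)|$ is the same for all admissible $g$ (those with $|\pi(g)| = k$, $\pi(g) \subseteq \Delta$), so $k = |\pi(g)| = \sum_{i=1}^m |\pi(g)\cap\pi(S_i)|$ with each summand a fixed positive integer $c_i \ge 1$; if $m < k$ then some $c_i \ge 2$, meaning every such $g$ has two of its primes inside a single $S_i$, i.e. those two primes are adjacent in $\Gamma(S_i)$. I would then exploit Lemma~\ref{l:t(S)}: either $t(S_i) > 3n/8 + 6k$, or this forced-adjacency holds. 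In the forced-adjacency case, I show it contradicts the coclique structure of $S_i = L_{n_i}^{\varepsilon}(2)$ — namely, among the $> n_i/2$ primes of $\Delta_i$ lying in distinct $e$-classes $(n_i/2, n_i]$, we can always choose $k$ of them pairwise nonadjacent and extend to an admissible $g$ with all $k$ primes in one $S_i$ being impossible unless $c_i = 1$. Matching this against the large-coclique alternative $t(S_i) > 3n/8+6k$ (which by the spectrum bound $u_i^{n_i}\lesssim 2^n$ and the linear-group coclique formula $t(L_{n_i}(2)) \approx n_i/2$ forces $n_i \gtrsim 3n/4$, hence each $S_i$ is "almost full size"), a final counting check gives $m \ge k$, so $m = k$.

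The main obstacle, I expect, is the bookkeeping in the counting estimate bounding $|\Delta_i|$: one must carefully separate the primes of $\Delta_i$ by their $e$-values, handle the linear/unitary versus symplectic/orthogonal cases (the factor $3/2$ from $\eta$ in Lemma~\ref{l:eta-estim} matters for whether the inequalities close), and track the "$-\text{const}$" corrections (the $n-2$ exponents in Lemma~\ref{l:spec}, the role of the index $6$ excluded from primitive divisors, the characteristic prime $v_i$). Getting the constants to line up so that $n \ge 56k^2$ is exactly what is needed — and not more — is the delicate part; everything else is an orchestration of Lemmas~\ref{l:key}, \ref{l:t(S)}, \ref{l:pi(g)=pi(h)}, and \ref{l:spec Ln(2)}.
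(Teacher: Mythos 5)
Your high-level skeleton coincides with the paper's: assume $m<k$, use pigeonhole to find an admissible $g$ (i.e.\ $|\pi(g)|=k$, $\pi(g)\subseteq\Delta$) with $|\pi(g)\cap\pi(S_1)|\geq2$ for some factor $S_1$, transfer this forced adjacency to other admissible elements via Lemma~\ref{l:pi(g)=pi(h)}, and contradict the coclique structure of $\Gamma(S_1)$. But the execution has a genuine gap: you front-load the identification $S_i\simeq L^{\varepsilon}_{n_i}(2)$ with $n_i\gtrsim 3n/4$, claiming it follows from the element-order ceiling $u_i^{n_i}\lesssim 2^n$ and the large-coclique alternative of Lemma~\ref{l:t(S)}. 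In the paper that identification is Lemma~\ref{l:Ai}, proved \emph{after} and \emph{using} $m=k$: only when $m=k$ does every admissible $g$ satisfy $|\pi(g)\cap\pi(S_i)|=1$, which puts every $S_i$ into the branch $t(S_i)>3n/8+6k$ of Lemma~\ref{l:t(S)} and lets one invoke the coclique tables to force $n_i$ large and $u_i=2$. Under your hypothesis $m<k$, the factor $S_1$ with $c_1\geq2$ sits in the \emph{other} branch of Lemma~\ref{l:t(S)}, so you have no lower bound on $t(S_1)$; and the order ceiling alone does not force $u_1=2$ (a symplectic group over $\mathbb{F}_3$ of rank roughly $0{.}6n$, say, passes that test). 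The hard case of the lemma is exactly the one your plan skips.

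What the paper actually does in that case is the missing content. For odd characteristic it counts $e$-classes: Lemma~\ref{l:2.13} shows that at most $3k$ primes of $\Delta_1$ can share an $e$-class, so $|\Theta|\geq|\Delta_1|-3k>3n/8+4k$ for $\Theta=\{e(r,\varepsilon u)\mid r\in\Delta_1, r\neq v\}$; Lemmas~\ref{l:pi(g)=pi(h)} and \ref{l:adj}(iii) cap the number of $e$-classes above $n_1/2$ at $2k-1$, and Lemma~\ref{l:eta-estim} then gives $|\Theta|\leq(3n_1)/4+2k+1$; combining yields $n_1>n/2+2$, and the contradiction comes not from the order ceiling but from Lemma~\ref{l:pow2}: $2^{l+1}\in\omega(S_1)\setminus\omega(L^k)$. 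For even characteristic the paper uses the constraint $n_1f\leq n$ (from $R_{n_1f}(2)$ or $R_{2n_1f}(2)$ lying in $\pi(S_1)$) together with the subfamily $\Omega'$ of primes with \emph{odd} $e$-value to produce two adjacent primes whose $\varphi$-values both exceed $n_1/2$ yet are distinct, against Lemma~\ref{l:adj}(iii); your sketch of "$k$ pairwise nonadjacent primes in distinct $e$-classes above $n_i/2$" is close in spirit, but you have not justified that the primes you pick from $\Delta_1$ actually have $\varphi(\cdot,S_1)>n_1/2$ — that step needs the arithmetic relating $e(p,2)$, $e(p,u)$ and $f$. A minor additional point: Lemma~\ref{l:pi(g)=pi(h)} requires $\pi(g)\cap\pi(h)=\varnothing$, so the constancy of $i\mapsto|\pi(g)\cap\pi(S_i)|$ over all admissible $g$ needs a (routine) disjoint-companion argument rather than a direct citation.
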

\begin{proof}
Lemma~\ref{l:groups-NK}(ii) yields $m\leq k$.
Assume that $m<k$. Take an element $g\in K$ such that
$|\pi(g)|=k$ and $\pi(g)\subseteq\Delta$. Then without loss of generality, we may assume that $|\pi(g)\cap\pi(S_1)|\geq 2$ and put $n_1=\prk S_1$.

By Lemmas~\ref{l:sporadic}-\ref{l:exceptional}, each group $S_i$,
$i=1,\ldots,m$, is a classical group of Lie type. Let $S_1$ be a classical group over a field of order $u$ and characteristic~$v$.
First we consider the case when $v$ is odd. If $n_1\leq3$, then
$S_1$ includes at most 13 maximal tori (up to conjugation) (see, e.g., \cite[Lemma~1.2]{VasVd05}), and we arrive at a contradiction as in Lemma~\ref{l:exceptional}. Hence $n_1\geq4$.

If $S_1\simeq L^-_{n_1}(u)$ or $S_1\simeq O^-_{n_1}(u)$, then put $\varepsilon=-$, otherwise put $\varepsilon=+$.
Each $r\in\Delta_1$ either is equal to $v$ or belongs to $R_{j}(\varepsilon u)$ for some integer $j\in\{1,\ldots,n_1\}$. Put $\Theta=\{ e(r,\varepsilon u)~|~ r\in\Delta_1, r\neq v\}$. By \cite[Lemma~2.13]{Vas15} (see also Remark~2 after it), $S_1$ has an element of order $k_j(\varepsilon{u})$ for each $j\in\Theta$. Therefore, $|R_j(\varepsilon{u})\cap\Omega|\leq k$. If $\rvert\Delta_1\rvert\geq3k+1$, then one can find $k$ primes $p_1,p_2,\ldots, p_k$ in $\Delta_1\setminus\{v\}$ with $e(p_j,\varepsilon u)>2$. If, additionally, $\rvert\Delta_1\rvert-\rvert\Theta\rvert\geq3k+1$, then for every $p_j$, $j=1,\ldots,k$, there is $p_j'\in\Delta_1\setminus\{p_1,\ldots,p_k\}$ with $e(p_j,\varepsilon u)=e(p_j',\varepsilon u)$. Take $h\in K$ such that $|h|=p_1p_2\cdots p_k$ and put $h=h_1h_2$, where $\pi(h_1)=\pi(h)\cap\pi(S_1)$ and $\pi(h_2)\cap\pi(S_1)=\varnothing$. By Lemma~\ref{l:gk}, we may assume that $h_1\neq1$ and $\pi(h_1)=\{p_1,\ldots,p_s\}$, where $1\leq s\leq k$. By the choice of $p_j$, $e(p_j, \varepsilon{u})>2$ for every $1\leq j\leq k$, so Lemma~\ref{l:2.13} implies that there is $h_1'\in S_1$ of order $p_1'p_1\cdots p_s$. Since $h_2$ centralizes $S_1$, we obtain the element $h_1'h_2\in K$ of order $p_1'p_1\ldots p_k$; a contradiction. Therefore, $|\Delta_1|-|\Theta|\leq 3k$ and hence $|\Theta|\geq |\Delta_1|-3k>3n/8+4k$ due to Lemma~\ref{l:pi(S)}.

Suppose that there are at least $2k$ elements in $\Theta$ greater than $n_1/2$.
Then there exists $h\in K$ such that $|h|=p_1\cdots p_k$, where $p_1,\ldots,p_k$ do not not belong to $\pi(g)$, $e(p_i,\varepsilon u)>n_1/2$ for $i=1,\ldots,k$, and $e(p_i,\varepsilon u)\neq e(p_j,\varepsilon u)$ for $i\neq j$.
Lemma~\ref{l:pi(g)=pi(h)} implies that $|\pi(h)\cap\pi(S_1)|\geq2$, which contradicts Lemma~\ref{l:adj}(iii). If $S_1$ is a linear or unitary group, then $\Theta\subseteq\{1,\ldots, n_1\}$ and hence $|\Theta|\leq n_1/2+2k-1$. Assume that $S_1$ is a symplectic or orthogonal group. As above, we see that the set $\{j~\in\Theta~|~n_1/2<\eta(j)\}|$ has at most $2k-1$ elements. By Lemma~\ref{l:eta-estim}, the set $\{j\in\Theta~|~\eta(j)\leq n_1/2\}$ has at least $(3n_1)/4-3/2$ elements and the set $\{j\in\Theta~|~\eta(j)\leq n_1\}$ has at most $(3n_1)/2+1/2$ elements. Hence $|\Theta|\leq 2k-1+(3n_1)/2+1/2-(3n_1)/4+3/2=2k+(3n_1)/4+1$.

Therefore, $(3n_1)/4+2k+1\geq|\Theta|\geq3n/8+4k+1$. It follows that $n_1\geq n/2+8k/3>n/2+2$.
Lemma~\ref{l:pow2} yields $2^{l+1}\in\omega(S_1)$; a contradiction with Lemma~\ref{l:spec Ln(2)}(ii).

Thus, $S_1$ is a group of even characteristic, i.e., $u=2^f$ for some integer~$f$.
Then $R_{n_1f}(2)\subseteq \pi(S_1)$ or $R_{2n_1f}(2)\subseteq \pi(S_1)$. It follows that $n_1f\leq n$.
Consider the set $\Omega'=\{r_i\in\Omega~|~i\text{ is odd}\}$ of size $n/4$. Since $|\Omega'\cap\Delta|\geq n/4+|\Delta|-n/2=|\Delta|-n/4$ and $|\Delta|\geq 3n/8+8k+1$, we have $|\Omega'\cap\Delta|>2k$. Hence, there exist $k$ distinct primes from $(\Omega'\cap\Delta)\setminus\pi(g)$ and an element $h$ in $K$ whose order equal to their product. It follows from Lemma~\ref{l:pi(g)=pi(h)} that $\pi(h)\cap\pi(S_1)\geq2$.

Therefore, there are two distinct primes $p_1,p_2\in \Omega'\cap\Delta\cap\pi(S_1)$ such that $p_1p_2\in\omega(S_1)$. By the choice of these primes, $j_1=e(p_1,2)\neq j_2=e(p_2,2)$. Put also $e_1=\varphi(p_1,S_1)$ and $e_2=\varphi(p_2,S_1)$. Since $p_1,p_2\in\Omega'$, the numbers $j_1$ and $j_2$ are odd. Note that $p_1$ divides $2^{2e_1f}-1$. Hence $j_1$ divides $2e_1f$, so $j_1$ divides $e_1f$. If $e_1f\geq 2j_1>n$, then $n_1f>n$; a contradiction. It follows that $e_1f=j_1$, so $e_1f=j_1>n/2\geq n_1f/2$ and hence $e_1>n_1/2$. Similarly, $j_2=e_2f$ and $e_2>n_1/2$. Since $p_1p_2\in\omega(S_1)$, Lemma~\ref{l:adj} yields
$e_1=e_2$. Then $j_1=j_2$, which contradicts the choice of $p_1$ and $p_2$, thus completing the proof.
\end{proof}



\begin{lem}\label{l:Ai} For every $i=1,\ldots,m$, the factor $S_i$ is a linear group over a field of order~$2$ and $7n/8+7k\leq\prk(S_i)\leq n$.
\end{lem}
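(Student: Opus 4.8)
The plan is to build on the machinery from Lemmas~\ref{l:groups-NK}--\ref{l:m=k}, which already tells us that each $S_i$ is a classical group of Lie type; it remains to pin down the characteristic, the dimension/rank, and the field. First I would dispose of odd characteristic. Suppose $S_1$ is a classical group over a field of order $u=v^f$ with $v$ odd. Since $m=k$ by Lemma~\ref{l:m=k}, take $g=g_1\cdots g_k\in K$ with $|\pi(g)|=k$, $\pi(g)\subseteq\Delta$; each $g_i$ must be nontrivial on $S_i$ by Lemma~\ref{l:gk}, so $|\pi(g_i)\cap\pi(S_i)|=1$. The same counting of maximal tori as in Lemma~\ref{l:exceptional} forces $\prk(S_1)=n_1\geq4$. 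Then I would reuse \emph{verbatim} the argument from the first half of the proof of Lemma~\ref{l:m=k}: set $\varepsilon$ according to the type of $S_1$, let $\Theta=\{e(r,\varepsilon u)\mid r\in\Delta_1,\ r\neq v\}$, use Lemma~\ref{l:2.13} to bound $|R_j(\varepsilon u)|\leq k$ and hence $|\Delta_1|-|\Theta|\leq 3k$, so $|\Theta|>3n/8+4k$ by Lemma~\ref{l:pi(S)}; then bound $|\Theta|$ from above (at most $n_1/2+2k-1$ in the linear/unitary case, at most $2k+3n_1/4+1$ in the symplectic/orthogonal case via Lemmas~\ref{l:adj}(iii), \ref{l:pi(g)=pi(h)}, \ref{l:eta-estim}) to get $n_1\geq n/2+2$, and finally invoke Lemma~\ref{l:pow2} to produce $2^{l+1}\in\omega(S_1)\subseteq\omega(G)$, contradicting Lemma~\ref{l:spec Ln(2)}(ii). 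So every $S_i$ has even characteristic.

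Next I would nail down the lower bound on the rank and the field order. Write the underlying field of $S_i$ as $2^{f_i}$ and $n_i=\prk(S_i)$. Just as in the last paragraph of the proof of Lemma~\ref{l:m=k}, $R_{n_if_i}(2)\subseteq\pi(S_i)$ or $R_{2n_if_i}(2)\subseteq\pi(S_i)$, so $n_if_i\leq n$ (else an element of $S_i$, hence of $G$, would exceed the bound of Lemma~\ref{l:spec Ln(2)}(iii), or more simply a primitive prime $r_j(2)$ with $j>n$ cannot lie in $\pi(S_i)\subseteq\pi(G)=\pi(P)$ since $\Omega$-type primes in $\pi(P)$ have index $\leq n$). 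The key point is to show $f_i=1$. Consider the odd-index subset $\Omega'=\{r_j\in\Omega\mid j\text{ odd}\}$, of size $n/4$; then $|\Omega'\cap\Delta|>2k$, so by Lemma~\ref{l:pi(g)=pi(h)} (applied to a suitable $h\in K$ supported on $\Omega'\cap\Delta$ off $\pi(g)$) there are two distinct $p_1,p_2\in\Omega'\cap\Delta\cap\pi(S_i)$ with $p_1p_2\in\omega(S_i)$ and $j_1=e(p_1,2)\neq j_2=e(p_2,2)$ both odd. Exactly as in Lemma~\ref{l:m=k}, $e(p_t,S_i)\cdot f_i=j_t$ forces $\varphi(p_t,S_i)=j_t/f_i>n/(2f_i)\geq n_i/2$ for $t=1,2$; since $j_1\neq j_2$ but Lemma~\ref{l:adj}(iii) would force $e(p_1,2^{f_i})=e(p_2,2^{f_i})$ hence $j_1=j_2$ unless $f_i=1$ — wait, more carefully: with $f_i=1$ we get $j_t=\varphi(p_t,S_i)$ directly, and adjacency plus Lemma~\ref{l:adj} still has to be consistent, so the contradiction in Lemma~\ref{l:m=k} only arose because we were simultaneously assuming $m<k$. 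Here $m=k$, so instead the conclusion is simply $f_i=1$: any $f_i\geq2$ reproduces the $j_1=j_2$ contradiction. With $f_i=1$ in hand, $n_i\leq n$ is immediate, and $R_{n_i}(2)\subseteq\pi(S_i)$ (or $R_{2n_i}(2)$), but primitive primes of index $>n$ are unavailable, confirming $n_i\leq n$.

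Having reduced to even characteristic with $f_i=1$, I would eliminate the non-linear classical types. If $S_i$ is unitary, symplectic, or orthogonal over $\mathbb{F}_2$ (or, for orthogonal, one must also rule out small fields), the same $\Theta$-counting argument as above but now phrased with $\varphi$ and $\eta$ — reusing Lemma~\ref{l:eta-estim} — bounds the number of available indices well below $3n/8$, contradicting Lemma~\ref{l:pi(S)}; alternatively, for symplectic and orthogonal groups of odd characteristic we already excluded them, and for even characteristic $Sp$ and $O$ one can appeal to Lemma~\ref{l:pow2}'s analogue or directly to the fact that $2^{n_i}$-related $2$-elements together with the coclique structure force $n_i$ large enough that $\omega(S_i)$ contains $2^{l+1}$ via Lemma~\ref{l:ga}-style unipotent arguments, contradicting Lemma~\ref{l:spec Ln(2)}(ii). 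Finally, for the lower bound $n_i\geq 7n/8+7k$: the coclique $\Delta_i\subseteq\Omega$ consists of primes $r_j(2)$ with $n/2<j\leq n$, and by Lemma~\ref{l:adj}(iii)/(iv) the $e(r,2)$-values of primes in $\Delta_i$ that exceed $n_i/2$ must be pairwise distinct among themselves and within $\{1,\dots,n_i\}$, so at most $n_i/2$ of them have index $>n_i/2$ while those of index $\leq n_i/2$ number at most $n_i/2$ as well; combining, $|\Delta_i|\leq n_i$ roughly, but a sharper bookkeeping (noting $\Delta_i$ indices all lie in $(n/2,n]$ and must inject into the index set of $S_i$) yields $n_i\geq|\Delta_i|>3n/8+7k$ after accounting for which indices in $(n/2,n]$ are realizable — and pushing this with the explicit spectrum of $L_n(2)$ from Lemma~\ref{l:spec} gives the claimed $7n/8+7k\leq n_i\leq n$.

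The main obstacle I anticipate is the lower bound $\prk(S_i)\geq 7n/8+7k$: the upper estimates ruling out wrong types are routine variations on Lemmas~\ref{l:exceptional}--\ref{l:m=k}, but getting the rank up to $7n/8$ (rather than merely $n/2$ or $3n/8$) requires carefully exploiting that $\Delta_i$ is a \emph{large} coclique all of whose primes are primitive prime divisors $r_j(2)$ with indices concentrated in the narrow window $(n/2,n]$, and that in a linear group $L_{n_i}(2)$ these indices must be genuinely distinct and realizable in the spectrum — so one needs the precise count of admissible indices from Lemma~\ref{l:spec}(ii)--(iii) together with Lemma~\ref{l:composite}-type density of composite values, carefully combined with $|\Delta_i|>3n/8+7k$, to conclude $n_i$ cannot be much smaller than $n$.
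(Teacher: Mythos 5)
Your overall outline (eliminate odd characteristic, force field order $2$, eliminate the non-linear classical types, then bound the rank using $|\Delta_i|$) matches the paper's, but the engine you propose to drive it does not run. The central gap is your plan to reuse ``verbatim'' the $\Theta$-counting from the proof of Lemma~\ref{l:m=k}. The upper bounds on $|\Theta|$ obtained there (``at most $2k-1$ elements of $\Theta$ exceed $n_1/2$'', hence $|\Theta|\leq n_1/2+2k-1$, resp.\ $\leq 3n_1/4+2k+1$) rest on the fact that, under the hypothesis $m<k$, pigeonhole gives $|\pi(g)\cap\pi(S_1)|\geq2$, so Lemma~\ref{l:pi(g)=pi(h)} forces the auxiliary element $h$ to satisfy $|\pi(h)\cap\pi(S_1)|\geq2$, contradicting Lemma~\ref{l:adj}(iii). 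In the setting of Lemma~\ref{l:Ai} we have $m=k$, so $|\pi(g)\cap\pi(S_1)|=1$, Lemma~\ref{l:pi(g)=pi(h)} only returns $|\pi(h)\cap\pi(S_1)|=1$, and no contradiction appears: the $k$ primes $p_1,\dots,p_k$ may perfectly well distribute one per factor. What survives is only the trivial bound $|\Theta|\leq n_1$ (or $\leq 3n_1/2+1/2$), which against $|\Theta|>3n/8+4k$ yields roughly $n_1>3n/8$ --- not the $n_1>n/2+2$ you need. Without $n_1>n/2+2$ the odd-characteristic elimination via Lemma~\ref{l:pow2} fails, the deduction $f_i=1$ from $n_if_i\leq n$ fails, and the exclusion of unitary/symplectic/orthogonal types (which in the paper also hinges on $n_1>n/2+2$ forcing a primitive prime divisor of index $>n$ into $\pi(S_1)$) fails. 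The paper obtains $n_1>n/2+2$ by an ingredient you set up but never fire: since $m=k$ forces $|\pi(g)\cap\pi(S_i)|=1$, Lemma~\ref{l:t(S)} gives $t(S_i)>3n/8+6k$, and the tables of \cite{VasVd11} give $t(S_1)\leq(3n_1+5)/4$ for any classical group; comparing these two bounds is the whole point, and it works uniformly before any case split on the characteristic.

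Two further steps are not salvageable as written. Your derivation of $f_i=1$ from two odd-index primes $p_1,p_2$ with $p_1p_2\in\omega(S_i)$ is circular for the same reason: with $m=k$ you cannot manufacture such a pair inside a single factor (Lemma~\ref{l:pi(g)=pi(h)} caps $|\pi(h)\cap\pi(S_1)|$ at $1$), and your own mid-sentence correction does not repair this --- the clean argument is simply that $R_{n_1f}(2)\subseteq\pi(S_1)\subseteq\pi(L)$ forces $n_1f\leq n$, which together with $n_1>n/2$ gives $f=1$. Finally, for the bound $\prk(S_i)\geq 7n/8+7k$ you stop at $n_i\geq|\Delta_i|>3n/8+7k$ and acknowledge the gap; the missing (short) observation is that the indices $e(s,2)$ for $s\in\Delta_i$ are pairwise distinct, all lie in the interval $(n/2,n]$, and each must be at most $n_i$ for $s$ to divide $|L_{n_i}(2)|$, so they inject into $(n/2,\,n_i]$ and hence $n_i\geq n/2+|\Delta_i|>7n/8+7k$. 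No appeal to Lemma~\ref{l:composite} or to a density count of admissible indices is needed.
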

\begin{proof}
By Lemma~\ref{l:gk}, there is an element $g\in K$ such that $|\pi(g)|=k$, $\pi(g)\subseteq\Delta$. For every $i=1,\ldots,m$ and any such element $g$, the same lemma yields $|\pi(g)\cap\pi(S_i)|\neq\varnothing$. Since $m=k$ in view of Lemma~\ref{l:m=k}, it follows that $|\pi(g)\cap\pi(S_i)|=1$ and, by Lemma~\ref{l:t(S)}, the inequality $t(S_i)\geq 3n/8+6k$ holds.

Consider one of the groups $S_i$, say $S_1$. Denote by $u$ the order of the underlying field of $S_1$ and put $n_1=\prk S_1$. It follows from \cite[Tables~2, 3]{VasVd11} that $t(S_1)\leq(3n_1+5)/4$. Therefore, $(3n_1+5)/4\geq 3n/8+6k$ and hence $n_1>n/2+2$.

If $u$ is odd, then we immediately arrive at a contradiction in view of Lemma~\ref{l:pow2}(ii). Assume now that $u=2^f$. If $S_1$ is a symplectic or orthogonal group, then $R_{2t}(u)\subseteq\pi(S_1)$ for every $t<n_1$. Since $n_1>n/2+2$, it follows that $R_{n+2}(u)\subseteq\pi(S_1)$, so $R_{f(n+2)}(2)\subseteq\pi(S_1)$, a contradiction. If $S_1$ is an unitary group, then $\pi(S_1)$ includes either $R_{2(n_1-1)}(u)$ or $R_{2n_1}(u)$. Hence $R_{2jf}(2)\subseteq\pi(S_1)$ for some $j\in\{n_1-1, n_1\}$. Since $2j>n$, we arrive at a contradiction. If $S_1$ is a linear group, then $R_{n_1f}(2)\subseteq R_{n_1}(u)\subseteq\pi(S_1)$. Since $2n_1>n$, it follows that $u=2$, as required.

Clearly, $\prk(S_1)\leq n$. Since $|\Omega|=n/2$ and $|\Delta_1|>3n/8+7k$ in view of Lemma~\ref{l:pi(S)}, there are at most $n/8-7k-1$ integers $a$ between $n/2+1$ and $n$ such that $e(s,2)\neq a$ for some $s\in\pi(S_1)$. Therefore, $\prk(S_i)\geq 7n/8+7k$, and we are done.
\end{proof}

\begin{lem}\label{l:n/3}
If $r\in\pi(N)\cup\pi(\ov{G}/K)$, then $e(r,2)<n/3$. In particular, $\Delta=\Omega$.
\end{lem}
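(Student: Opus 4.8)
The plan is to exploit the Frobenius subgroups available inside the linear factors $S_i$ (Lemma~\ref{l:frob}) together with Lemma~\ref{l:action} to force any prime $r\in\pi(N)\cup\pi(\ov G/K)$ with large $e(r,2)$ to produce a forbidden element order in $\omega(G)=\omega(P)$. First I would fix such a prime $r$ and set $e=e(r,2)$; by little Fermat $e$ divides $r-1$, so $e\leq n/3$ would already force $r$ to be quite large, but the point is the reverse: suppose for contradiction that $e\geq n/3$. By Lemma~\ref{l:Ai}, each $S_i=L_{n_i}(2)$ with $n_i\geq 7n/8+7k>n/3+1$, so by Lemma~\ref{l:frob} each $S_i$ contains a Frobenius subgroup with kernel a $2$-group of order $2^{c}$ and cyclic complement of order $2^c-1$, for any $2\le c\le n_i-1$; in particular we may pick in every $S_i$ a Frobenius subgroup whose complement has order divisible by a suitable prime. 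The aim is to choose these complements so that the primitive prime divisor $r_e(2)$ (call it $s$), or rather a set of $k$ such primes with pairwise distinct $e$-values exceeding $n/3$, can be realized simultaneously across the $k$ factors, and then combine with $r$ via Lemma~\ref{l:action} to get an element of order $r\cdot s_1\cdots s_k$ that is too large or violates a coclique condition in $\omega(P)$.

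More precisely, the key steps I would carry out in order are: (1) record that $\Delta=\Omega$ will follow once we know no prime of $\pi(N)\cup\pi(\ov G/K)$ has $e$-value in the range $(n/3,\dots]$ — actually we need the stronger fact that every $r\in\Omega\setminus\Delta$ would have $e(r,2)>n/2$, combined with the count in Lemma~\ref{l:Ai}, to squeeze $\Delta$ back up to all of $\Omega$; (2) for the main claim, take $r\in\pi(N)$ first: since $N$ is normal in $G$ and $r\in\pi(N)$, apply Lemma~\ref{l:action} with the $k$ simple factors $S_1,\dots,S_k$ of $G/K$ — wait, one must be careful that $K$ sits inside $\ov G=G/N$, so I would instead work with the chain $N\lhd$ (preimage of $K$) $\lhd G$ and apply Lemma~\ref{l:action} to the section whose quotient is $S_1\times\cdots\times S_k$, using that each $s_i$ chosen (a primitive prime divisor $r_j(2)$ with $j$ composite$+1$ so that $s_i$ is not a Fermat prime) lies outside $\pi(N)$ by Lemma~\ref{l:groups-NK}(iii) and outside the relevant kernel; (3) choose the complements' prime orders $s_1,\dots,s_k$ to be $k$ distinct primitive prime divisors $r_{j_1}(2),\dots,r_{j_k}(2)$ with $n/2<j_1<\cdots<j_k\le n$ and each $j_t+1$ composite (so $s_t$ is not Fermat, and $s_t>2n+1$), avoiding the one prime $r$ and such that $r\notin\{2\}=\{p_1,\dots,p_k\}$ (here $p_i=2$ is the characteristic, and $r\ne 2$ since $r\in\Omega$-range); then Lemma~\ref{l:action} yields $r\,s_1\cdots s_k\in\omega(G)$; (4) derive the contradiction: the primes $s_1,\dots,s_k$ form a subset of the coclique $\Omega$ of size $k$, and adjoining $r$ with $e(r,2)>n/3$... here I would use Lemma~\ref{l:P(i)}(iii) or a direct coclique/order-bound argument — since each $r\,s_1\cdots s_k\le q^n/(q-1)<2^n$ by Lemma~\ref{l:spec Ln(2)}(iii) applied in $P$, and $r s_1\cdots s_k > e\cdot(2n)^k \ge (n/3)(2n)^k$, which for $n\ge 56k^2$ need not immediately exceed $2^n$; so the cleaner route is the coclique one: by Lemma~\ref{l:spec Ln(2)}(i), $\{r_{j_1}(2),\dots,r_{j_k}(2)\}\cup\{r\}$ is a coclique of size $k+1$ in $\Gamma(L)$ provided $r$ is also a primitive prime divisor $r_i(2)$ with $i>n/2$, whence by Lemma~\ref{l:key} no element of $\omega(L^k)$ has order divisible by all $k+1$ of them — contradiction. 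Since $e(r,2)>n/3$ does not by itself put $r$ above $n/2$, the delicate point is exactly bridging $(n/3,n/2]$; for that I would invoke that $r\in\pi(N)\cup\pi(\ov G/K)$ together with the already-established $\Delta_i>3n/8+7k$ (Lemma~\ref{l:pi(S)}) and a counting argument showing that if some $r$ with $n/3<e(r,2)\le n/2$ divided $|N|$, we could still build the order $r s_1\cdots s_k$ with $e(s_t,2)$ chosen in $(n/2,n]$ and conclude via Lemma~\ref{l:adj}(ii)–(iii) that $r s_1\cdots s_k\in\omega(L^k)$ forces one $S_i$ to absorb both $r$ and some $s_t$, contradicting $e(r)\le n/2<e(s_t)$ and non-divisibility.

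The main obstacle I expect is the careful bookkeeping in step (3): Lemma~\ref{l:action} requires the Frobenius complement to have prime order $s_i\notin\pi(K)$ that is not a Fermat prime, and it requires $r\notin\{p_1,\dots,p_m\}$ where $p_i$ are the kernel primes (all equal to $2$ here). Arranging simultaneously that (a) each $s_i$ is a genuine primitive prime divisor $r_{j_i}(2)$ realizable as a complement order inside $L_{n_i}(2)$ — which needs $s_i \mid 2^{c}-1$ for the complement order $2^c-1$, i.e. $j_i\mid c$ for some $2\le c\le n_i-1$, so one picks $c=j_i\le n\le$ something $\le n_i$ only if $j_i\le n_i-1$, forcing $j_i\le 7n/8+7k-1<n$, fine for $j_i$ in a suitable subrange of $(n/2,n)$; (b) the $s_i$ are pairwise distinct and distinct from $r$; (c) none is Fermat, handled by choosing $j_i+1$ composite as in the proof of Lemma~\ref{l:alternaing}; and (d) $s_i\notin\pi(K)$ — but $s_i$ is a prime of $S_i=L_{n_i}(2)$, hence \emph{is} in $\pi(K)$, which directly violates the hypothesis of Lemma~\ref{l:action}! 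So the actual argument must be subtler: one applies Lemma~\ref{l:action} only to the section $G\to \ov G/K$-type quotients, or rather uses $r\in\pi(N)$ with the Frobenius subgroups of $S_i$ acting \emph{on $N$} after reducing to the case $N$ is a minimal normal (elementary abelian $r$-) subgroup via Lemma~\ref{l:K-solvable}, exactly as in the proof of Lemma~\ref{l:action} itself. Thus the real plan is to re-run the centralizer/Hall--Higman machinery of Lemma~\ref{l:action} with $K$ (the product of $S_i$) playing the role of the quotient and $N$ (reduced to an $r$-group) the role of the normal $p$-subgroup, concluding $r\cdot s_1\cdots s_k\in\omega(G)$, and then the coclique contradiction closes it; the analogous treatment of $r\in\pi(\ov G/K)$ uses that $\ov G/K$ embeds in $\prod\Out(S_i)\rtimes Sym_m$ and a field/diagonal automorphism of order divisible by $r$ forces $e(r,2)$ small by the order bound Lemma~\ref{l:spec Ln(2)}(iii), essentially repeating the estimate already made at the end of the proof of Lemma~\ref{l:groups-NK}.
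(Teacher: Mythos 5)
Your overall strategy for $r\in\pi(N)$ is the paper's: arrange matters so that Lemma~\ref{l:action} applies with $N$ in the role of the normal subgroup and $K=S_1\times\cdots\times S_m$ as the quotient, pick in each $S_i=L_{n_i}(2)$ a Frobenius subgroup with $2$-group kernel and complement of prime order $s_i$ via Lemma~\ref{l:frob}, obtain $rs_1\cdots s_m\in\omega(G)$, and contradict Lemma~\ref{l:key} with a coclique of size $k+1$. You also correctly diagnose and repair the apparent clash with the hypothesis $s_i\notin\pi(K)$ of Lemma~\ref{l:action}. The treatment of $r\in\pi(\ov{G}/K)$ is in the right spirit too, though the paper's is simpler: by Lemma~\ref{l:rob} and $|\Out(L_a(2))|=2$, such an $r$ lies in $\pi(Sym_b)$ with $b\leq k$, so $e(r,2)<r\leq k<n/3$.

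The genuine gap is in your step (4), and you flag it yourself without closing it: with $e=e(r,2)\in(n/3,n/2]$ and the $s_t=r_{j_t}(2)$ chosen only subject to $j_t\in(n/2,n]$, the set $\{r,s_1,\ldots,s_k\}$ need not be a coclique of $\Gamma(L)$. Lemma~\ref{l:adj}(ii)--(iii) say nothing about a pair with one $\varphi$-value at most $n/2$ and the other above $n/2$; in fact such $r$ and $s_t$ \emph{are} adjacent whenever $e\mid j_t$ or $e\leq n-j_t$ (take a part divisible by $e$ and a part equal to $j_t$ in Lemma~\ref{l:spec}(ii)--(iii)), for instance $e\approx 0.4n$ and $j_t\approx 0.55n$. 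So ``one $S_i$ absorbs both $r$ and some $s_t$'' yields no contradiction, and the argument collapses precisely on the range $(n/3,n/2]$ that the lemma is about. The paper's fix is to constrain the complement orders much more tightly: it takes $s_i\in\Delta_i\setminus\{r\}$ with $e(s_i,2)\in(2n/3,\,7n/8+7k)$ and $e\nmid e(s_i,2)$ --- possible because $|\Delta_i|>3n/8+7k$ leaves at least $n/12+14k$ candidates in that window and $e>n/3$ divides at most one integer there. Then $e+e(s_i,2)>n$ together with $e\nmid e(s_i,2)$ forces nonadjacency of $r$ with every $s_i$, while $e(s_i,2)<7n/8+7k\leq n_i$ keeps the Frobenius subgroup of Lemma~\ref{l:frob} available. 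Two smaller slips: non-Fermatness of $s_i$ comes from $e(s_i,2)$ not being a power of $2$ (automatic for $e(s_i,2)\in(n/2,n)$ since $n=2^l$), not from ``$j_t+1$ composite''; and you must take $s_i\in\Delta$, not merely in $\Omega$, to invoke Lemma~\ref{l:groups-NK}(iii) for $s_i\notin\pi(N)$.
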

\begin{proof}

Suppose that $r\in\pi(\ov{G}/K)$. Since $C_{\overline{G}}(K)=1$, we infer that $\overline{G}$ embeds into $\Aut(K)$. In view of Lemma~\ref{l:rob}, this implies that $r\in\pi(\Out(L_a(2)))$ or $r\in\pi(Sym_b)$, where $b\leq k$. Since $|\Out(L_a(2))|=2$, it follows that $e(r,2)<r\leq b\leq k<n/3$.

Suppose now that $r\in\pi(N)$ and $e=e(r,2)>n/3$. We arrive at a contradiction by showing that there is an element in $G$ whose order is a product of $k+1$ primes pairwise nonadjacent in $\Gamma(L)$. Clearly, $e$ divides at most one integer between $2n/3$ and $n$. Observe also that every $s\in\Omega\setminus\{r_n\}$ is not Fermat's prime, because $e(s,2)$ is not a power of~$2$. For $i=1,\ldots,m$, set $n_i=\prk(S_i)$ and $\Delta'_i=\Delta_i\setminus\{r\}$.

Define $\Theta=\{x\in\mathbb{N} \mid 2n/3<x<7n/8+7k\}$. Direct calculation shows that for every $i=1,\ldots,m$, there are at least $n/12+14k$ primes $s$ in $\Delta'_i$ such that $e(s,2)\in\Theta$. It follows that one can choose $m$ distinct primes $s_1,\ldots,s_m$ such that $s_i\in\Delta'_i$ and $e(s_i,2)\in\Theta$ is not a multiple of $e$ for every $i=1,\ldots,m$. Lemma~\ref{l:Ai} yields $e(s_i,2)<7n/9+7k\leq n_i$ for every $i=1,\ldots,m$, so each $S_i$ includes a Frobenius subgroup whose kernel is a 2-group and complement has order $s_i$ by Lemma~\ref{l:frob}. It follows from Lemma~\ref{l:action} that there is an element $x\in G$ of order $rs_1\cdots s_m$. All the $s_i$ in this product are nonadjacent in $\Gamma(L)$ with each other because they are different primes from $\Omega$, and with $r$ because $e+e(s_i,2)>n$ and $e$ does not divide $e(s_i,2)$. It remains to note that $|\pi(x)|=k+1$ since $m=k$ in view of Lemma~\ref{l:m=k}.
\end{proof}

\begin{lem}\label{l:SiEqL} For every $i=1,\ldots,m$, the factor $S_i$ is isomorphic to~$L$.
\end{lem}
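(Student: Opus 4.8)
The plan is to show that each $S_i$ is a linear group $L_{n_i}(2)$ with $n_i = n$, thereby identifying $S_i$ with $L = L_n(2)$. By Lemma~\ref{l:Ai} we already know $S_i \simeq L_{n_i}(2)$ with $7n/8 + 7k \leq n_i \leq n$, so the entire task reduces to proving $n_i = n$ for every $i$. I would argue by contradiction, assuming $n_1 < n$ for one of the factors, say $S_1$.

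The key idea is to find an element order in $P = L^k$ that cannot be realized in $G$ (or vice versa) under the assumption $n_1 < n$. The natural tool is the element $\Pi(i) = \prod_{j \in \Psi(i)} r_j(2)$ from Lemmas~\ref{l:M(i)} and \ref{l:P(i)}: by Lemma~\ref{l:P(i)}(i), $\Pi(i) \in \omega(L_n(2)^k) = \omega(P) = \omega(G)$. First I would check that the numerical hypotheses of Lemma~\ref{l:M(i)} are satisfied (we need $n > 18(k+1)$, which follows from $n \geq 56k^2$ and $k \geq 2$, and we need $n \geq i$). Then, working inside $\overline{G} = G/N$: using Lemma~\ref{l:n/3}, all primes dividing $|N| \cdot |\overline{G}/K|$ have $e(r,2) < n/3$, whereas every prime $r_j(2)$ appearing in $\Pi(i)$ has $e(r_j(2),2) = j > n/3$ (since $j \in \Psi(i)$ satisfies $j > n/3$). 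Hence a preimage in $G$ of any element realizing $\Pi(i)$ must have its order-$\Pi(i)$ part supported entirely in $K = S_1 \times \cdots \times S_k$. Now apply Lemma~\ref{l:P(i)}(ii) with the factorization $K = L_{n_1}(2) \times \cdots \times L_{n_k}(2)$: since $\Pi(i) \in \omega(K)$ and each $n_m$ satisfies $4 \leq n_m \leq n$, we conclude $n_m = n$ for every $m$, which contradicts $n_1 < n$.

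The step I expect to require the most care is the transfer from $\omega(G)$ to $\omega(K)$: one must verify that an element of $G$ of order divisible by $\Pi(i)$ projects to an element of $\overline{G}$ whose $K$-part still has order divisible by $\Pi(i)$, i.e. that no part of $\Pi(i)$ "escapes" into $N$ or into $\overline{G}/K$. This is exactly where Lemma~\ref{l:n/3} is essential: since every prime in $\Pi(i)$ has $e$-value exceeding $n/3$ while every prime of $N$ and of $\overline{G}/K$ has $e$-value below $n/3$, the primes of $\Pi(i)$ are disjoint from $\pi(N) \cup \pi(\overline{G}/K)$, and a suitable power of the element lands in $K$ with order still divisible by $\Pi(i)$. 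One should also double-check that the choice of the auxiliary index $i$ in $\Psi(i)$ (needed only for the divisibility conditions in Lemma~\ref{l:M(i)}) can be made freely here — e.g. taking $i = n$ or any convenient value $\leq n$ — since in this lemma we only use part (ii) of Lemma~\ref{l:P(i)}, for which the role of $i$ is merely bookkeeping. Once $n_m = n$ is established for all $m$, combined with Lemma~\ref{l:Ai} giving $S_m \simeq L_{n_m}(2)$ over the field of order $2$, we obtain $S_m \simeq L_n(2) = L$, completing the proof.
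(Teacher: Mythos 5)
Your proposal is correct and follows essentially the same route as the paper: reduce via Lemma~\ref{l:Ai} to showing $n_i=n$, place $\Pi(i)\in\omega(K)$ using Lemma~\ref{l:n/3} (the paper takes the concrete choice $i=2$), and conclude by Lemma~\ref{l:P(i)}(ii). The extra care you describe about transferring the element from $G$ into $K$ is exactly the role Lemma~\ref{l:n/3} plays in the paper's one-line justification.
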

\begin{proof}
Since $n\geq56k^2>18(k+1)$, we may consider a set $\Psi(2)$ as in Lemma~\ref{l:M(i)}. Lemma~\ref{l:P(i)}(i) yields $$\Pi(2)=\prod\limits_{j\in\Psi(i)}r_j(2)\in\omega(P)=\omega(G).$$ Moreover, $\Pi(2)\in\omega(K)$ by Lemma~\ref{l:n/3}. In view of
Lemma~\ref{l:Ai}, we have $S_i\simeq L_{n_i}(2)$, where $4\leq n_i\leq n$ for every $i=1,\ldots,m$. It follows from Lemma~\ref{l:P(i)}(ii) that $n_i=n$ for each $i$, as required.
\end{proof}

\begin{lem}
$G/N=K$.
\end{lem}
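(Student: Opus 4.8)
The plan is to show that $\overline{G}:=G/N$ coincides with $K$, i.e., that $\overline{G}/K=1$. By Lemma~\ref{l:groups-NK}(i) we have $C_{\overline{G}}(K)=1$, so $\overline{G}$ embeds into $\Aut(K)$; since $m=k$ by Lemma~\ref{l:m=k} and each $S_i\simeq L$ by Lemma~\ref{l:SiEqL}, Lemma~\ref{l:rob} gives $\Aut(K)\simeq\Aut(L)\wr Sym_k$ and the induced isomorphism $\Out(K)\simeq\Out(L)\wr Sym_k$, where $\Out(L)\simeq\mathbb{Z}_2$ because $L=L_n(2)$ with $n\geq3$. For a prime $p$ let $E_p$ be the largest integer with $p^{E_p}\in\omega(L)$; then no element of $\omega(L^k)$ is divisible by $p^{E_p+1}$ (an order in $L^k$ is a least common multiple of $k$ orders in $L$), and $E_2=l$ by Lemma~\ref{l:spec Ln(2)}(ii). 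Finally, $\omega(\overline{G})=\omega(G/N)\subseteq\omega(G)=\omega(P)=\omega(L^k)$, so it suffices to produce an element of $\overline{G}$ whose order does not lie in $\omega(L^k)$.

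Suppose, for a contradiction, that $\overline{G}\neq K$. Consider first the case in which some $x\in\overline{G}$ normalizes a factor, say $S_1$, and induces on it an automorphism outside $\mathrm{Inn}(S_1)$. Then $N_{\overline{G}}(S_1)/C_{\overline{G}}(S_1)$ is a subgroup of $\Aut(S_1)\simeq\Aut(L)$ containing $\mathrm{Inn}(S_1)$ together with an outer (graph) automorphism, hence equals $\Aut(L)$; by Lemma~\ref{l:ga} it has an element of order $2^{l+1}$, so $2^{l+1}\in\omega(\overline{G})\subseteq\omega(L^k)$, which is impossible because $2^{l+1}\notin\omega(L)$ by Lemma~\ref{l:spec Ln(2)}(ii) and hence $2^{l+1}\notin\omega(L^k)$.

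In the remaining case no element of $\overline{G}$ induces an outer automorphism on a factor it normalizes, and then the kernel of the permutation action $\overline{G}\to Sym_k$ on $\{S_1,\dots,S_k\}$ equals $K$: if $y$ lies in this kernel it normalizes every $S_i$ and acts on it as conjugation by some $c_i\in S_i$, so $y\,c_1^{-1}\cdots c_k^{-1}\in C_{\overline{G}}(K)=1$ and $y\in K$. As $\overline{G}\neq K$, the image of $\overline{G}$ in $Sym_k$ is nontrivial, so we may pick $x\in\overline{G}$ whose image $\pi$ has prime order $p$; then $\pi$ is a product of disjoint $p$-cycles and, after renumbering the factors, one of them is $(1\,2\,\cdots\,p)$, while $x^p\in K$. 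Write $x^p=(u_1,\dots,u_k)$ with $u_i\in S_i$, choose $w\in S_1$ with $|w|=p^{E_p}$, and put $z=xg$ where $g=(u_1^{-1}w,1,\dots,1)\in K$. From the identity $(xg)^p=x^p\cdot g^{x^{p-1}}g^{x^{p-2}}\cdots g^{x}g$ (with $g^{x^j}=x^{-j}gx^j$), and since $g^{x^j}$ has support $S_{\pi^{-j}(1)}$, which equals $S_1$ only for $j=0$, the $S_1$-component of $z^p$ equals $u_1\cdot(u_1^{-1}w)=w$; hence $p^{E_p}$ divides $|z^p|$. Since $z\notin K$ and $z^p\in K$, the coset $zK$ has order $p$, so $p$ divides $|z|$; writing $|z|=p^am$ with $p\nmid m$ we obtain $|z^p|=p^{a-1}m$, whence $a-1\geq E_p$ and $p^{E_p+1}$ divides $|z|\in\omega(\overline{G})\subseteq\omega(L^k)$, contradicting the choice of $E_p$. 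Therefore $\overline{G}=K$.

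The main obstacle is this last case, the possible nontrivial permutation action of $\overline{G}/K$ on the factors. For a transposition ($p=2$) the element $z$ above has order divisible by $2^{l+1}=2^{E_2+1}$ and one finishes exactly as in the first case. For an odd prime $p$, however, it would not suffice to exhibit an element of order $p\cdot2^{l}$, or even $p$ times a large element order, since such orders already lie in $\omega(L^k)$ once $k\geq2$; the point is rather to force the $p$-part of $|z|$ to exceed the maximal $p$-power appearing in $\omega(L)$, which is precisely what the choice $|w|=p^{E_p}$ secures.
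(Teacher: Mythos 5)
Your proof is correct and follows essentially the same two-case strategy as the paper: if some element induces an outer (graph) automorphism on a normalized factor, invoke Lemma~\ref{l:ga} to get $2^{l+1}\in\omega(G)$ against Lemma~\ref{l:spec Ln(2)}(ii); otherwise use a nontrivial permutation of the factors to manufacture an element whose $p$-part exceeds the largest $p$-power in $\omega(L)$. Your handling of the second case is in fact a little more careful than the paper's (the correction by $u_1^{-1}$ accounts for $x^p$ being a possibly nontrivial element of $K$, which the paper glosses over), but the underlying idea is identical.
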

\begin{proof}
Suppose that a prime $r$ divides $|\overline{G}/K|$.
Take an element $g\in\overline{G}/K$ or order $r$.
Suppose that $S_i^g=S_i$ for every $i\in\{1,\ldots,k\}$. Since $C_{\overline{G}}(K)=1$,
we may assume that $[S_1,g]\neq 1$. Then $g$ acts on $S_1$ as a graph automorphism, so $r=2$ and $S_1\langle g\rangle\simeq\Aut(L)$. It follows from Lemma~\ref{l:ga} that $2^{l+1}\in\omega(\Aut(L))\subseteq\omega(G)$, which contradicts Lemma~\ref{l:spec Ln(2)}(ii).

Thus, $S_i^g\neq S_i$ for some $i$. Then one can assume, up to reordering, that $g$ permutes factors in the product $S_1\times S_2\times\ldots\times S_r$. Denote by $x$ an element of $L$ whose order is $r^k$, where $k$ is maximal possible. If $h=(x,1,\ldots,1)\in S_1\times S_2\times\ldots\times S_r$, then $|hg|=r^{k+1}$. Since $r^{k+1}\not\in\omega(P)$, we get a contradiction.
\end{proof}

\begin{lem}
$N=1$.
\end{lem}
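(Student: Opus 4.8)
The plan is to show that the solvable radical-like subgroup $N$ is trivial by exhibiting an element of $G$ whose order is forbidden by $\omega(P)$ if $N\ne 1$. Since we have already established $G/N=K=S_1\times\cdots\times S_k$ with each $S_i\simeq L=L_n(2)$, the quotient $G/N$ is exactly $P$. If $N\ne 1$, pick a prime $r\in\pi(N)$. By Lemma~\ref{l:n/3} we know $e(r,2)<n/3$, and by Lemma~\ref{l:SiEqL} every factor $S_i$ is $L_n(2)$, so by Lemma~\ref{l:frob} each $S_i$ contains a Frobenius subgroup with kernel a $2$-group and cyclic complement of order $s_i$, where $s_i$ can be chosen to be a primitive prime divisor $r_{j_i}(2)$ for suitable $j_i$.

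The key step is to choose the indices $j_i$ carefully. First I would use Lemma~\ref{l:M(i)} with $i$ equal to the index $e=e(r,2)$ (or rather a slight variant handling the constraint $e<n/3$), or more directly repeat the argument of Lemma~\ref{l:n/3}: select distinct primes $s_1,\ldots,s_k$ with $s_i\in\pi(S_i)$, each $s_i=r_{j_i}(2)$ with $2n/3<j_i<n$ (so that $j_i<\prk(S_i)=n$, making the Frobenius subgroup available), each $j_i+e>n$ (automatic since $e>n/3$ would fail — wait, $e<n/3$; instead we need $j_i$ large enough that $s_i$ and $r$ are nonadjacent in $\Gamma(L)$), and each $e\nmid j_i$. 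The nonadjacency of $r$ and $s_i$ in $\Gamma(L)=\Gamma(L_n(2))$ follows from Lemma~\ref{l:adj} once $e+j_i>n$ and $e\nmid j_i$; since there are at least $n/2$ choices of index in $(n/2,n]$ and $e$ divides at most one of them while $e+j_i>n$ holds for all $j_i>n-e$, and $n-e>2n/3$, there is plenty of room to pick $k$ such indices in distinct $\pi(S_i)$ (using $\Delta=\Omega$ from Lemma~\ref{l:n/3} and $|\pi(S_i)\cap\Omega|$ large by Lemma~\ref{l:pi(S)}).

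Next I would verify that $r$ itself is not the characteristic of any $S_i$: indeed each $S_i\simeq L_n(2)$ has characteristic $2$, and $r$ is odd (as $r\in\Omega$-territory, or simply $r\ne 2$ since $2\in\pi(K)$ already and $e(r,2)$ is defined). Also $s_i$ is not a Fermat prime since $e(s_i,2)=j_i>2n/3\ge 2$ is not a power of $2$ for an appropriate choice of $j_i$ (composite index), and $s_i\notin\pi(K)$ — wait, $s_i\in\pi(S_i)\subseteq\pi(K)$, so I must instead apply Lemma~\ref{l:frob-action} or a direct Hall--Higman argument rather than Lemma~\ref{l:action}, since the latter requires $s_i\notin\pi(K)$. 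The cleaner route: apply the argument internal to Lemma~\ref{l:n/3} verbatim, which produces an element $x\in G$ of order $rs_1\cdots s_k$ with the $s_i$ pairwise nonadjacent and each nonadjacent to $r$ in $\Gamma(L)$. Then $|\pi(x)|=k+1$ and all $k+1$ primes form a coclique in $\Gamma(L)$, so by Lemma~\ref{l:key} this order cannot lie in $\omega(P)=\omega(G)$; contradiction. Hence $N=1$ and $G\simeq P$.

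The main obstacle will be the bookkeeping around which version of the Frobenius-extension lemma applies: Lemma~\ref{l:action} needs the complement order outside $\pi(K)$, which fails here since $s_i\in\pi(S_i)$, so one must instead invoke Lemma~\ref{l:frob-action} applied to the action of (a Sylow $r$-subgroup of) $N$ together with the Frobenius structure lifted through $N$, exactly as was done inside the proof of Lemma~\ref{l:n/3}. Getting the precise count of available indices $j_i\in(n-e,n]$ lying in distinct $\pi(S_i)$ and avoiding the single multiple of $e$ and the excluded set $\pi(g)$ requires the estimates $|\pi(S_i)\cap\Omega|>3n/8+7k$ and $e<n/3$, which together with $n\ge 56k^2$ leave ample slack; this is the part to check but it is routine given the earlier lemmas.

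\begin{proof}
Recall that $G/N=K=S_1\times\cdots\times S_k$ with $S_i\simeq L=L_n(2)$ for all $i$ by Lemmas~\ref{l:SiEqL}, \ref{l:m=k}, and the previous lemma; thus $G/N\simeq P$. Assume for contradiction that $N\ne1$ and fix a prime $r\in\pi(N)$. Since $2\in\omega(P)$ forces nothing here and $e(r,2)$ is defined only for odd $r$, while $r$ could a priori be $2$; however if $r=2$ then, as $N$ is a normal $2$-subgroup upon passing to a Sylow $2$-subgroup and $C_G(\text{Syl}_2 N)$ contains a conjugate of $K$ acting on it, one gets an element of order $2^{l+1}$ in $G$ (an element of order $2^l$ in some $S_i$ acts on the nontrivial module $N$, and by the Hall--Higman analogue it has a nontrivial fixed point, giving order $2^{l+1}$), contradicting Lemma~\ref{l:spec Ln(2)}(ii). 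So $r$ is odd and $e=e(r,2)$ is defined; by Lemma~\ref{l:n/3}, $e<n/3$.

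We now reproduce the argument of Lemma~\ref{l:n/3}. For $i=1,\ldots,k$, put $n_i=\prk(S_i)=n$. Let $\Theta=\{x\in\mathbb{N}\mid n-e<x<7n/8+7k\}$; since $e<n/3$ we have $n-e>2n/3$, so $\Theta\supseteq\{x\mid 2n/3<x<7n/8+7k\}$ minus the finitely many integers in $(2n/3,n-e]$, and in any case $\Theta$ contains more than $n/12$ integers. By Lemma~\ref{l:n/3} we have $\Delta=\Omega$, and by Lemma~\ref{l:pi(S)} each $\pi(S_i)$ contains primitive prime divisors $r_j(2)$ for all but fewer than $k$ values $j\in(n/2,n]$. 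Hence for each $i$ there are more than $n/12-2k$ primes $s\in\pi(S_i)$ with $e(s,2)\in\Theta$, and in particular we may choose distinct primes $s_1,\ldots,s_k$ with $s_i\in\pi(S_i)$, $e(s_i,2)\in\Theta$, and $e\nmid e(s_i,2)$ (this excludes at most one value of $e(s_i,2)$ for each $i$). By construction $e+e(s_i,2)>n$, so $r$ and $s_i$ are nonadjacent in $\Gamma(L)$ by Lemma~\ref{l:adj}, and the $s_i$ are pairwise nonadjacent as distinct elements of $\Omega$.

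Since $e(s_i,2)<7n/8+7k\le n=\prk(S_i)$ and $S_i\simeq L_n(2)$, Lemma~\ref{l:frob} gives a Frobenius subgroup of $S_i$ with kernel a $2$-group and cyclic complement of order $s_i$. As in the proof of Lemma~\ref{l:n/3}, combining these Frobenius subgroups over all $i$ and lifting through $N$ (applying Lemma~\ref{l:frob-action} to the action of the relevant Sylow $r$-subgroup of $N$, exactly as in Lemma~\ref{l:action}, the hypothesis $r\ne 2=\mathrm{char}(S_i)$ being satisfied and each $s_i$ not a Fermat prime since $e(s_i,2)$ is not a power of $2$), we obtain an element $x\in G$ of order $rs_1\cdots s_k$. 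Then $\pi(x)=\{r,s_1,\ldots,s_k\}$ is a coclique of size $k+1$ in $\Gamma(L)$, so Lemma~\ref{l:key} gives $rs_1\cdots s_k\notin\omega(P)=\omega(G)$, a contradiction. Therefore $N=1$, and consequently $G=K\simeq P$.
\end{proof}
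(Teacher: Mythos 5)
Your strategy for odd $r\in\pi(N)$ --- building a coclique $\{r,s_1,\dots,s_k\}$ of size $k+1$ in $\Gamma(L)$ and invoking Lemma~\ref{l:key} --- does not work for all primes that may divide $|N|$, and this is a genuine gap, not bookkeeping. Lemma~\ref{l:n/3} gives only the upper bound $e=e(r,2)<n/3$; it says nothing to prevent $e$ from being very small, e.g.\ $r=3$ with $e=2$. For $r$ and a primitive prime divisor $r_j(2)$ to be nonadjacent in $\Gamma(L_n(2))$ you need (as you yourself use) $e+j>n$ and $e\nmid j$, so $j$ must lie in the interval $(n-e,n]$. Your set $\Theta=\{x\mid n-e<x<7n/8+7k\}$ is nonempty only when $e>n/8-7k$, so for small $e$ it is empty and the assertion that it ``contains more than $n/12$ integers'' is false. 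Even after replacing the upper bound by $n$ (legitimate, since by now $\prk(S_i)=n$), the interval $(n-e,n]$ contains at most $e$ indices, of which at least one is a multiple of $e$; for $r=3$ the only admissible index is $j=n-1$, and distinct primes in $R_{n-1}(2)$ are pairwise \emph{adjacent} by Lemma~\ref{l:adj}(iv), so no coclique of size $k+1$ containing $3$ exists for $k\ge 2$. The paper avoids this entirely by switching mechanisms: it reduces to $N$ elementary abelian of exponent $p$, takes the largest $t$ with $p^t\in\omega(L)$, finds $j\le n-1$ with $p^t\mid 2^j-1$ (the case $j=n$ being excluded precisely via $e(p,2)<n/3$), and uses the Frobenius subgroup $2^j{:}(2^j-1)$ of $S_1$ together with Lemma~\ref{l:frob-action} to produce an element of order divisible by $p^{t+1}$, which lies outside $\omega(L^k)$ because prime powers in $\omega(L^k)$ and $\omega(L)$ coincide.

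Two further points. First, your treatment of $r=2$ is incorrect: if an element $x$ of order $2^l$ acting on the $2$-group $N$ has a nontrivial fixed point $y$, the product $xy$ still has order dividing $2^l$, so a fixed point does not yield order $2^{l+1}$; moreover the Hall--Higman-type lemma is a cross-characteristic statement and does not apply to a $2$-element acting on a $2$-group. The paper instead applies Lemma~\ref{l:frob-action} to the Frobenius subgroup $(2^n-1){:}2^l$ of $S_1$, whose kernel has order coprime to $2$, obtaining an element of order $2^{l+1}$. Second, the case where $K$ centralizes $N$ (in particular $N\le Z(G)$) is not reached by any action argument; the paper handles it separately by multiplying a central element of order $p$ into an element of order $\Pi(i)$ and appealing to Lemma~\ref{l:P(i)}(iii), which shows $p\cdot\Pi(i)\notin\omega(L^k)$. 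Your proof would need all three of these repairs to go through.
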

\begin{proof}
Suppose that $N\neq1$. By Lemma~\ref{l:K-solvable}, we may assume that $N$ is solvable. Hence, arguing by induction on $|G|$, we may also assume that $N$ is an elementary abelian $p$-group for some $p\in\pi(G)$.

Suppose that $C_G(N)=G$. If $p=r_i$, then $i\geq2$ and we take $x=\Pi(i)$, where $\Pi(i)$ is as in Lemma~\ref{l:P(i)}. If $p=2$, then take $x=\Pi(2)$. It follows from Lemma~\ref{l:P(i)}(iii)
that $px\in\omega(G)\setminus\omega(L^k)$; a contradiction. Therefore, without loss of generality, we may assume that $S_1\cap C_G(N)/N=1$.

Assume that $p$ is odd. Let $t$ be an integer such that
$p^t\in\omega(L)$ and $p^{t+1}\not\in\omega(L)$. Lemma~\ref{l:spec} implies that
$p^t$ divides $2^j-1$ for some $j\in\{1,\ldots,n\}$. Then $i=e(p,2)$ divides $j$. Suppose that $j=n$.
Lemma~\ref{l:n/3} yields $i<n/3$. Since $n/i$ is a power of $2$ and coprime to $p$, it follows that $2^i-1$ is divisible by $p^t$ (see, e.g. \cite[Lemma~1.7]{Vas15}).
So we can assume that $1\leq j\leq n-1$. Lemma~\ref{l:frob} implies that
$S_1$ includes a Frobenius subgroup with kernel of order $2^j$ and cyclic complement of order $2^j-1$. It follows from Lemma~\ref{l:frob-action} that $p^{t+1}\in\omega(G)$; a contradiction.

If $p=2$, then Lemma~\ref{l:frob} implies that $S_1$ includes a Frobenius group with kernel of order $2^n-1$ and complement of order $n=2^l$. It follows from Lemma~\ref{l:frob-action} that $2^{l+1}\in\omega(G)$, which contradicts Lemma~\ref{l:spec Ln(2)}.
\end{proof}

\section{Proof of Theorem 2}

In this short section we prove Theorem~\ref{p:1} and Proposition~\ref{p:upper}.
If $\Delta$ is a nonempty set of integers, then
$\mu(\Delta)$ stands for the set of
all maximal elements of $\Delta$ with respect to divisibility.
Given a finite group $G$, put $\mu(G)=\mu(\omega(G))$.
Note that $\omega(G)$ consists of all the divisors of $\mu(G)$ and so is completely determined by it.
It is easy to see that if $G$ and $H$ are finite groups,
then

\begin{equation}\label{eq:mu}
\mu(G\times H)=\mu(\{[a,b]~|~a\in\mu(G),b\in\mu(H)\}).
\end{equation}

Suppose that $G$ is a finite group and $r\in\pi(G)$. If $a$ is the unique element of $\mu(G)$ coprime to $r$, then it follows from \eqref{eq:mu} that $r$ divides all the elements of $\omega(G\times G)$, so $\omega(G\times G)=\omega(G\times G\times\mathbb{Z}_r^m)$ for every positive integer $m$. Similarly, if there are only two distinct integers $a$ and $b$ from $\mu(G)$ both coprime to $r$, then $[a,b]$ is the unique element of $\mu(G\times G)$ coprime to $r$. Therefore, $r$ divides all the elements of $\mu(G^3)$ and, consequently, $\omega(G^3)=\omega(G^3\times\mathbb{Z}_r^m)$ for every~$m$. Thus, Theorem~\ref{p:1} follows from the next two lemmas.

\begin{lem} Let $L=L^\varepsilon_n(q)$, $\varepsilon\in\{+,-\}$, and
$q$ a power of a prime $p$. Suppose that there exists $r\in\pi(q-\varepsilon1)\setminus\pi(n)$. Then there are at most two elements of $\mu(L)$ coprime to $r$. Moreover, if $n-1$ is not a power of $p$, then there is only one such element.
\end{lem}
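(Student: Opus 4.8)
The plan is to read the answer off Buturlakin's description of $\omega(L)$ in Lemma~\ref{l:spec}. Since $\omega(L)$ consists of all divisors of the numbers listed in items (i)--(vi) of that lemma, every element of $\mu(L)$ is itself one of those numbers: if $a\in\omega(L)$ then $a$ divides one of the listed numbers $N$, and $N\in\omega(L)$, so maximality of $a$ forces $a=N$. Hence it suffices to decide which of the numbers (i)--(vi) are coprime to~$r$. Put $a=\varepsilon q$. Then $r\mid a-1$ (this is precisely $r\in\pi(q-\varepsilon1)$), so $r\mid a^m-1$, equivalently $r\mid q^m-(\varepsilon1)^m=(\varepsilon1)^m(a^m-1)$, for every $m\geq1$. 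Also $r\neq p$ (as $p\mid q$ and $r\mid q-\varepsilon1$), and $r\nmid d=(n,q-\varepsilon1)$ (as $d\mid n$ and $r\nmid n$).

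With these observations the inspection of (i)--(vi) is routine. For~(i): using the lifting-the-exponent formula for odd $r$, and noting that $r=2\in\pi(q-\varepsilon1)$ forces $q$ and hence $n$ to be odd (because $r\nmid n$), one gets $v_r(q^n-(\varepsilon1)^n)=v_r(q-\varepsilon1)$, so together with $r\nmid d$ the number $c_1:=\frac{q^n-(\varepsilon1)^n}{d(q-\varepsilon1)}$ is coprime to~$r$. For~(ii): the numerator $[q^{n_1}-(\varepsilon1)^{n_1},q^{n_2}-(\varepsilon1)^{n_2}]$ is divisible by $r$, while the denominator $(n/(n_1,n_2),q-\varepsilon1)$ is prime to $r$ since $(n_1,n_2)\mid n_1+n_2=n$, so $n/(n_1,n_2)\mid n$ and $r\nmid n/(n_1,n_2)$. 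For (iii)--(v): every factor $q^{n_j}-(\varepsilon1)^{n_j}$ occurring there is divisible by $r$, and the remaining factors are powers of $p\ne r$, so each of these numbers is divisible by~$r$. For~(vi): $p^k$ is coprime to $r$, and such a number occurs precisely when $n-1$ is a power of $p$, i.e.\ $n-1=p^{k-1}$, in which case it is uniquely determined. Thus every element of $\mu(L)$ coprime to $r$ lies in $\{c_1\}\cup\{p^k:\ n-1=p^{k-1}\}$, which gives the bound of at most two, dropping to at most one when $n-1$ is not a power of~$p$.

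It remains to promote ``at most one'' to ``exactly one'' in the last case, that is, to verify $c_1\in\mu(L)$. Here I would use a primitive prime divisor $r_n(a)$ of $a^n-1$: by Zsygmondy's theorem $R_n(a)\neq\varnothing$ outside a short list of pairs $(a,n)$, and a direct check of that list shows that in each exceptional case either no prime satisfies the hypothesis of the lemma or $L$ is not simple, so we may assume such an $r_n(a)$ exists. Since $e(r_n(a),a)=n\geq2$, we have $r_n(a)\mid c_1$ and $r_n(a)\nmid d(q-\varepsilon1)$, whereas $r_n(a)$ divides $q^m-(\varepsilon1)^m$ only when $n\mid m$; as every factor $q^{n_j}-(\varepsilon1)^{n_j}$ appearing in items (ii)--(vi) has $n_j<n$ and $r_n(a)\neq p$, the prime $r_n(a)$ divides none of the numbers in (ii)--(vi). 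Since $c_1\in\omega(L)$, it follows that $c_1$ is not a proper divisor of any element of $\mu(L)$, hence $c_1\in\mu(L)$.

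The only genuinely delicate points are the $r$-adic bookkeeping for $c_1$ --- in particular, in the case $r=2$ one must exploit that the hypothesis $2\nmid n$ forces $n$ to be odd --- and the verification that the Zsygmondy exceptions cause no trouble; the rest is a direct reading of the six families in Lemma~\ref{l:spec}.
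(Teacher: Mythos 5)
Your proof is correct and follows essentially the same route as the paper: the paper's own argument is a two-line version of the same inspection of Lemma~\ref{l:spec}, observing that $r\nmid d$ forces every number in items (ii)--(v) to be divisible by $r$, leaving only $\frac{q^n-(\varepsilon1)^n}{d(q-\varepsilon1)}$ and the $p^k$ from item (vi), the latter occurring only when $n-1=p^{k-1}$. Your extra steps --- the lifting-the-exponent computation (which can be replaced by the one-line congruence $\frac{a^n-1}{a-1}\equiv n\not\equiv0\pmod r$) and the Zsigmondy argument promoting ``at most one'' to ``exactly one'' --- are fine but not needed for the application in Theorem~\ref{p:1}, which only uses the upper bound on the number of elements of $\mu(L)$ coprime to~$r$.
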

\begin{proof}
Since $r$ does not divide $d=(n,q-\varepsilon 1)$, the only integers from Lemma~\ref{l:spec} coprime to $r$ are
$\frac{q^n-(\varepsilon1)^n}{d(q-\varepsilon1)}$ and $p^{k}$. Furthermore, $p^k\in\mu(L)$ only if $n=p^{k-1}+1$, and we are done.
\end{proof}

\begin{lem}
Suppose that $L=S_{2n}(q)$, where $q$ is a power of an odd prime $p$. Then there are at most two elements of $\mu(L)$ coprime to $r$. Moreover, if $2n-1$ is not a power of $p$, then there is only one such element.
\end{lem}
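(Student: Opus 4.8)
The plan is to mirror the structure of the preceding lemma for $L^\varepsilon_n(q)$, using the description of $\omega(S_{2n}(q))$ from Lemma~\ref{l:spec2}. First I would note that $r=2$ here, since for a symplectic group $L=S_{2n}(q)$ with $q$ odd, the relevant prime is $2$ (which always divides $q-1$ or $q+1$, and we want it coprime to $2n$ — but actually $2\mid 2n$; so one should be careful: in the application in Theorem~\ref{p:1}(ii) the prime under consideration is $2$, and the point is simply to bound the number of elements of $\mu(L)$ that are odd). So the task reduces to: how many odd elements can $\mu(S_{2n}(q))$ contain?

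The key observation is that every number listed in items (i), (ii), (iii) of Lemma~\ref{l:spec2} is even. Indeed, $q$ is odd, so $q^{n_j}\pm 1$ is even, and the least common multiple of such numbers in (ii), as well as the products in (iii), are even; the numbers $\frac{q^n\pm1}{2}$ in (i) are the only candidates to be odd, and at most one of $\frac{q^n-1}{2}$, $\frac{q^n+1}{2}$ is odd (since their product is $\frac{q^{2n}-1}{4}$ and consecutive-type parity forces exactly one of $q^n-1$, $q^n+1$ to be $\equiv 2\pmod 4$). Hence the only possibly-odd elements of $\omega(L)$ coming from (i)–(iii) are divisors of a single odd number $\frac{q^n+\delta}{2}$ for the appropriate sign $\delta$, contributing at most one element to $\mu(L)$. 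The remaining source of odd elements is item (iv): the single prime power $p^k$ with $p^{k-1}+1 = 2n$, which is odd and which lies in $\mu(L)$ only when this Diophantine condition holds. This already gives at most two odd elements of $\mu(L)$.

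Then I would handle the ``moreover'' clause: if $2n-1$ is not a power of $p$, then there is no $k>1$ with $p^{k-1}+1=2n$, i.e.\ $2n-1=p^{k-1}$ is impossible, so item (iv) contributes nothing, leaving at most one odd element of $\mu(L)$, namely the relevant divisor-maximal element of $\frac{q^n+\delta}{2}$. One should also check that a $p^k$ from item (iii) with $s\geq 1$ does not accidentally produce an odd maximal element when $p$ is odd: but any such number is $p^k\cdot[\ldots]$ with the bracket even, hence itself even, so it is irrelevant. This completes the argument and, combined with the displayed consequence of \eqref{eq:mu} preceding the lemma, yields parts of Theorem~\ref{p:1}(ii).

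The main obstacle, though a minor one, is bookkeeping the parity correctly: one must verify that exactly one of $q^n-1$ and $q^n+1$ is $\equiv 2 \pmod 4$ (true since their product is divisible by $8$ but their gcd is $2$), so that $\frac{q^n-\delta}{2}$ is even for one sign and $\frac{q^n+\delta}{2}$ is odd for the other, and confirm that all lcm's in (ii) and all products in (iii) are genuinely even — the latter because each factor $q^{n_j}+\varepsilon_j 1$ is even. Once these parity facts are pinned down, the counting is immediate.
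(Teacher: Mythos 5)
Your proof is correct and is essentially the paper's own argument: the paper simply says to repeat the proof of the preceding lemma using Lemma~\ref{l:spec2}, which is exactly the parity bookkeeping you carry out (items (ii)--(iii) are always even, item (i) contributes exactly one odd candidate since exactly one of $q^n\pm1$ is $\equiv2\pmod4$, and item (iv) contributes $p^k$ only when $2n-1$ is a power of $p$). Your identification of $r=2$ as the intended prime is also the correct reading of the lemma's statement.
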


\begin{proof}
We argue as in the previous lemma applying Lemma~\ref{l:spec2} instead of Lemma~\ref{l:spec}.
\end{proof}

If $n=2^l\geq2$, then $n$ is even, so $k\geq[(n+3)/2]=n/2+1$ yields $n<2k$. Therefore, the following proposition establishes the lower bound on $n_0$ from Remark~\ref{r:n0} after Theorem~\ref{t:main} in Introduction.

\begin{prop}\label{p:upper} If $L=L_n(2)$ and $k_0=[\frac{n+3}{2}]$, then $\omega(L^k)=\omega(L^{k_0})$ for every $k\geq k_0$.
\end{prop}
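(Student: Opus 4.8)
The plan is to show that $\omega(L^{k_0})$ already contains $\omega(L^k)$ for all $k\geq k_0$, the reverse inclusion being trivial since $L^{k_0}$ embeds in $L^k$. By the remark following \eqref{eq:mu}, it suffices to prove that $\mu(L^k)\subseteq\omega(L^{k_0})$, and since every element of $\mu(L^k)$ is a least common multiple $[a_1,\ldots,a_k]$ of elements $a_j\in\mu(L)$, it is enough to fix an arbitrary tuple $(a_1,\ldots,a_k)$ of elements of $\mu(L)$ and produce an element of $L^{k_0}$ whose order is divisible by $[a_1,\ldots,a_k]$.

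The key point is that $\mu(L)$, by Lemma~\ref{l:spec} with $q=2$, $\varepsilon=+$, $d=1$, consists of numbers built as least common multiples of ``blocks'' $2^{n_j}-1$ (together with at most one power-of-$2$ factor $2^k$ coming from items (iv)--(vi)), where the $n_j$ are positive integers summing to at most $n$. The crucial arithmetic fact is that the multiplicative structure of such a number is governed by the set of indices $i$ for which a primitive prime divisor $r_i(2)$ contributes: $2^{n_j}-1=\prod_{i\mid n_j}k_i(2)$, so each $a\in\mu(L)$ divides $\bigl[\,\prod k_i(2)\,\bigr]$ over an index set $I(a)$ of cardinality at most $n$ (indeed at most the number of divisors of the parts, but crudely $|I(a)|\le n$), possibly times $2^{t}$ with $2^t\in\omega(L)$. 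The plan is then: first reduce $[a_1,\ldots,a_k]$ to a ``canonical form'' $2^{t}\cdot\prod_{i\in I}k_i(2)^{e_i}$, where $I=\bigcup_j I(a_j)$, the exponents $e_i$ are the maxima of the local exponents, $2^t\in\omega(L)$, and then realize this number as an element order in a direct power of $L$ with as few factors as possible.

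The second ingredient is a counting/packing argument. Each index $i\in I$ with $e_i\ge 1$ forces, in whichever factor $L$ of the direct power realizes $r_i(2)^{e_i}$, a ``cost'' of roughly $i$ in the partition of $n$ (one needs a part $n_j$ divisible by $i$, or more precisely a part which is a suitable multiple of $i$ so that the full power $k_i(2)^{e_i}$ appears — here one invokes the standard lifting-the-exponent fact, as in \cite[Lemma~1.7]{Vas15}, that $v_{r_i(2)}(2^{mi}-1)$ grows with $m$). Using Lemma~\ref{l:spec}(iii) and (v) one can pack several indices $i_1,\ldots,i_s$ into one factor $L$ provided $i_1+\cdots+i_s\le n$ (for item (iii)) or with the analogous bound when a power of $2$ is also needed. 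Since the total of all relevant indices $\sum_{i\in I}(\text{cost of }i)$ is bounded in terms of $n$, a greedy bin-packing shows that $k_0=[(n+3)/2]$ bins suffice; the extra ``$+3/2$'' and the precise constant $1/2$ vs $3/2$ in the target are exactly what one gets from being careful about the largest index (which can be as big as $n$, needing a whole bin) together with the pairing $i_m+j_m=n$ phenomenon already exploited in Lemma~\ref{l:M(i)}. One should double check against the extreme element orders: the largest cyclic-torus order $2^n-1$ needs one full bin, an order like $(2^{n-1}-1)\cdot 2$ (from item (iv)) needs one bin, and a product $[2^{a}-1,2^{b}-1]$ with $a+b=n$ needs one bin, so $k_0$ bins accommodate any $k$-fold lcm whose ``spread'' of indices is at most what a partition of $n$ into $\le k_0$ parts can cover.

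I expect the main obstacle to be the precise combinatorial optimization showing that $[(n+3)/2]$ — and not, say, $n/2+O(1)$ with a worse constant — is the right number of bins: one must verify that no $k$-tuple from $\mu(L)$ has an lcm requiring more than $[(n+3)/2]$ copies of $L$, and this requires knowing exactly which index-multisets are realizable inside a single $L$ via Lemma~\ref{l:spec}, including the interaction between the ``$2^t$'' summand (which consumes $p^{k-1}+1$ from the budget $n$) and the primitive-divisor blocks. A clean way to organize this is: (1) show any element of $\omega(L)$ divides $2^{t}\cdot[2^{n_1}-1,\ldots,2^{n_s}-1]$ with $\sum n_j\le n$ and $2^t\in\omega(L)$; (2) show the lcm of $k$ such numbers divides $2^{t'}\cdot\prod_{i}k_i(2)^{e_i}$ over an index multiset whose total ``weight'' $\sum_i i\cdot\lceil\log_i(\text{something})\rceil$ is at most $k_0\cdot n$ — here use that each of the $k$ original numbers has weight $\le n$, but because many indices repeat across the $k$ numbers, the combined weight is much less than $kn$; and (3) partition the resulting index multiset into $\le k_0$ groups each of weight $\le n$, realizing each group inside one copy of $L$ by Lemma~\ref{l:spec}(iii) or (v). Steps (1) and (3) are routine given the lemmas above; step (2), the weight bookkeeping, is where the constant $[(n+3)/2]$ must be pinned down, and that is the part I would write out most carefully.
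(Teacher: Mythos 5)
Your overall strategy (reduce to showing $\mu(L^k)\subseteq\omega(L^{k_0})$ and invoke \eqref{eq:mu}) is sound, but the proof has a genuine gap: the entire content of the proposition sits in your steps (2)--(3), the claim that the combined index multiset of an arbitrary $k$-tuple from $\mu(L)$ can be packed into $[(n+3)/2]$ copies of $L$, and this is exactly the step you leave unwritten (``the part I would write out most carefully''). Moreover, the additive cost model you propose --- charging $i_1+\cdots+i_s\le n$ per bin --- does not by itself reach $k_0$: to cover all indices $1,\ldots,n$ additively you need the pairs $\{i,n-i\}$ for $1\le i<n/2$ plus separate bins for $n/2$ and for $n$, i.e.\ $[(n+2)/2]$ bins for the odd part, and one more for the $2$-part, which is $k_0+1$ for even $n$. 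Closing that last gap requires exploiting that a single part $m$ of a partition covers \emph{all} divisors $i$ of $m$ simultaneously (not at additive cost), together with the lifting-the-exponent bookkeeping for the multiplicities $e_i$; none of this is carried out, and for large $k$ the worst-case tuple really does force the full exponent, so the difficulty cannot be avoided.

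The paper's proof sidesteps the packing problem entirely with one observation: it suffices to realize the single number $N=\exp(L)$ as an element order of $L^{k_0}$, for then $\mu(L^{k_0})=\{N\}$ and $\omega(L^k)$ is the set of divisors of $N$ for every $k\ge k_0$. And $N$ is visibly the least common multiple of the $k_0$ element orders $2^e$ (the $2$-exponent of $L$) and $2^t-1,2^{t+1}-1,\ldots,2^n-1$, where $t=[(n+2)/2]$: every $m\le n$ divides some $m'$ with $t\le m'\le n$, so by Lemma~\ref{l:spec} the lcm of these $k_0$ numbers is the full exponent. I would restructure your argument around this single witness element rather than an arbitrary $k$-tuple; the bin-packing machinery then becomes unnecessary.
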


\begin{proof} Since $L_n(2)=1$ for $n=1$, we may assume that $n\geq2$. Denote by $2^e$ the $2$-exponent of~$L$, put $t=[(n+2)/2]$ and
denote by $N$ the least common multiple of $k_0=[(n+3)/2]$ integers $2^e, 2^t-1, 2^{t+1}-1,\ldots, 2^n-1$. Applying Lemma~\ref{l:spec}, we conclude that $N$ is the exponent of $L$. It follows from \eqref{eq:mu} by induction on $k$ that $\mu(L^k)=\mu\{[a_1,\ldots,a_k] \mid a_i\in \mu(L)\}$. Thus, $\mu(L^{k_0})=\{N\}$ is a singleton, so the proposition follows.
\end{proof}

As observed in Introduction, the exact value of $n_0$ is known only for $k=1$. For $k$ equal to $2$ and $3$, we have $n_0(2)\geq8$ and $n_0(3)\geq16$, because $\omega(L_{4}(2)^2)=\omega(L_{4}(2)\times L_{3}(2))$  and $\omega(L_{8}(2)^3)=\omega(L_{8}(2)^2\times L_{7}(2))$, respectively. It would be also interesting to know the asymptotic behavior of $n_0$ as $k$ tends to infinity.

\Addresses
\end{document}